\documentclass{amsart}
\usepackage{moreverb,url}
\usepackage{amssymb,amsmath,amsfonts}
\usepackage{bm}
\usepackage[font=scriptsize, labelfont=bf]{caption}
\usepackage{float}
\usepackage{varwidth}
\usepackage{booktabs}
\usepackage{afterpage}
\usepackage{mathrsfs}
\usepackage{subcaption}
\usepackage[squaren]{SIunits}
\usepackage{verbatim}
\usepackage[square,numbers]{natbib}
\usepackage{stmaryrd}
\usepackage{fancyhdr}
\usepackage{array}
\usepackage{syntax,etoolbox}
\usepackage{graphicx}
\usepackage{grffile}
\usepackage{hyperref}
\hypersetup{colorlinks=true,allcolors=blue}
\usepackage[square,numbers]{natbib}
\usepackage[left=1in, right=1in, bottom=1in,top=1in]{geometry}
\usepackage{lineno}
\usepackage{syntax,etoolbox}
\usepackage{algorithmicx}
\usepackage{algorithm}% http://ctan.org/pkg/algorithms
\usepackage{algpseudocode}% http://ctan.org/pkg/algorithmicx
\usepackage{tikz}
\allowdisplaybreaks

\newtheorem{theorem}{Theorem}[section]

\newtheorem{remark}{Remark}[section]

\newtheorem{innercustomgeneric}{\customgenericname}
\providecommand{\customgenericname}{}
\newcommand{\newcustomproblem}[2]{%
	\newenvironment{#1}[1]
	{%
		\renewcommand\customgenericname{#2}%
		\renewcommand\theinnercustomgeneric{##1}%
		\innercustomgeneric
	}
	{\endinnercustomgeneric}
}

\newcustomproblem{customprob}{Problem}

\newcommand*{\bqed}{\hfill\ensuremath{\blacksquare}}%

\newcommand{\vertiii}[1]{{\left\vert\kern-0.25ex\left\vert\kern-0.25ex\left\vert #1 
		\right\vert\kern-0.25ex\right\vert\kern-0.25ex\right\vert}}

\algblock{Input}{EndInput}

\algnotext{EndInput}

\algblock{Output}{EndOutput}

\algnotext{EndOutput}

\algblock{Iterate}{EndIterate}

\algnotext{EndIterate}

\makeatletter
\newcommand{\addresseshere}{%
	\enddoc@text\let\enddoc@text\relax
}
\makeatother

\begin{document}
	
	%%
	%% The title of the paper goes here.  Edit to your title.
	%%
	
	\title[Finite element CDA for an NSCH system]{A finite element continuous data assimilation framework for a Navier--Stokes--Cahn--Hilliard system}
	
	%%
	%% Now edit the following to give your name and address:
	%%

	\author[Tianyu Sun]{Tianyu Sun}
\address{T.S. Department of Mathematics and Institute for Scientific Computing and Applied Mathematics, Indiana University Bloomington, 729 East Third Street, Bloomington, Indiana, USA}
\email{ts19@iu.edu}

\today

\begin{abstract}
This paper studies a coupled two-dimensional Navier--Stokes--Cahn--Hilliard phase-field model augmented by a transported auxiliary field, and develops a continuous data assimilation (CDA) framework for recovering its trajectories from coarse-in-space observations.

We formulate a nudging-based CDA system for the coupled NSCH--auxiliary-field model, in which coarse measurements are incorporated through a general linear observation operator. The observation mechanism is described abstractly by an interpolant satisfying an $H^{2}$-type approximation property, which is compatible with coarse spatial observations obtained from mesh coarsening and reconstruction.

At the continuous level, we record two structural properties of the model: a formal energy law for the reference system and an evolution law for the phase mean in the assimilated dynamics. At the discrete level, we introduce a capped fully discrete finite element splitting scheme using continuous quadratic elements for the phase, chemical potential, velocity, and auxiliary field, together with continuous linear elements for the pressure. For this scheme, we prove one-step well-posedness and establish a stepwise a priori estimate for the capped method.

Numerical experiments illustrate the practical behavior of the proposed CDA approach. They demonstrate recovery from strongly mismatched initial conditions and show how synchronization depends on the observation resolution, boundary forcing, and feedback strength. A coarse-indistinguishability experiment further shows that identical coarse initial information may correspond to distinct fine-scale evolutions, while the assimilated dynamics select the trajectory determined by the supplied time-dependent observations.

\smallskip

\noindent \textbf{Keywords.} Navier–Stokes–Cahn–Hilliard $\cdot$ Continuous data assimilation $\cdot$ Nudging $\cdot$ Finite element method $\cdot$ Phase-field models

\smallskip
\noindent \textbf{MSC 2010.} 35Q35, 35K55, 35K57, 65M60, 93C20.
\end{abstract}

\maketitle

\tableofcontents

\section{Introduction}

Phase-field models provide a flexible framework for describing multiphase hydrodynamics in which interfaces are represented by thin transition layers rather than sharp boundaries. The underlying diffuse-interface viewpoint goes back to the classical work of Cahn and Hilliard on interfacial free energy, and has since become a standard modeling tool for interfacial fluid phenomena; see, for example, \cite{CahnHilliard1958,AndersonMcFaddenWheeler1998}. Within this paradigm, the Navier--Stokes--Cahn--Hilliard (NSCH) system, often referred to as Model~H and its variants, provides a diffuse-interface description of incompressible two-phase flow that couples hydrodynamics with interfacial mixing and capillarity. Because the interface is represented through an order parameter rather than tracked explicitly, phase-field formulations are especially effective in situations involving topological changes, such as droplet merger and breakup, without the need for front tracking. Representative examples include microfluidic droplet formation and breakup~\cite{BaiHeYangZhouWang2017}. In biomechanics and hemodynamics, phase-field methods have also been used to describe thrombus formation and remodeling in flowing blood, incorporating spatially varying material properties and permeability within a unified energetic framework~\cite{ZhengYazdaniLiHumphreyKarniadakis2020,%
XuEtAl2017ClotShear,%
XuAlberXu2019ThreePhase}. Related developments connect phase-field hydrodynamics to complex-fluid effects such as viscoelasticity and transported microstructure; see, for example,~\cite{LinLiuZhang2005,GiorginiNdongmoNganaMedjoTemam2023}.

From the analytical viewpoint, the NSCH system and related diffuse-interface models have been studied extensively. For two-phase flows with matched densities, diffuse-interface models coupled to incompressible Navier--Stokes equations were analyzed by Abels~\cite{Abels2009Matched}, while for the variable-density setting strong and weak solvability results were established by Abels and collaborators~\cite{AbelsGarckeGruen2012,AbelsDepnerGarcke2013}. Degenerate-mobility variants have also been investigated, with the physically important feature that the order parameter remains in the admissible interval under suitable assumptions~\cite{AbelsDepnerGarcke2013Degenerate}. For the NSCH system itself, uniqueness and regularity theory with physically relevant potentials has been developed in several settings, including strong well-posedness in two dimensions and local strong theory in three dimensions~\cite{GiorginiMiranvilleTemam2019}. Long-time dynamics for CHNS-type systems have likewise been studied through attractors and asymptotic behavior~\cite{GalGrasselli2010,GiorginiTemam2022}. Broader background on the Cahn--Hilliard equation and its variants may be found in the surveys and monographs \cite{Miranville2017,Miranville2019}. Other works incorporate additional physical couplings and nonlinear effects, such as thermo-induced Marangoni forcing~\cite{Wu2017Marangoni} and chemotaxis-driven or singular-potential regimes in two dimensions~\cite{HeWu2021Chemotaxis}. Motivated in part by thrombus modeling, rigorous PDE analyses have also been obtained for phase-field blood--thrombus systems, including local strong well-posedness results in three dimensions~\cite{KimTawriTemam2022} and in two dimensions~\cite{GrasselliPoiatti2023}. These studies provide a useful foundation for investigating algorithmic questions in situations where only partial information about the evolving state is available.

On the numerical side, the CHNS system has generated a substantial literature devoted to efficient and stable discretizations. Among widely used approaches are convex-splitting and pressure-projection methods, decoupled energy-stable schemes, adaptive energy-stable methods, and second-order linear schemes that preserve discrete energy dissipation; see, for instance, \cite{HanWang2015,ShenYang2015,ChenShen2016,ChenZhao2020,ZhaoHan2021}. Such works show that preserving as much as possible of the energetic structure of the continuous model is central to robust computation. They also highlight the practical role of splitting, decoupling, and linearization in making phase-field fluid solvers computationally feasible. These considerations are particularly relevant in the present setting, where the coupled NSCH dynamics are further enriched by a transported auxiliary field and by assimilation terms driven by coarse observations.

A central practical difficulty is that accurate initial conditions and complete state observations are rarely accessible in applications. Data assimilation addresses this issue by incorporating observations into the model dynamics in order to recover, or track, the underlying trajectory. Among continuous-in-time approaches, the nudging framework of Azouani--Olson--Titi (AOT) has attracted considerable attention because of its relative simplicity at the PDE level and its connection with the idea that dissipative systems possess finitely many determining parameters~\cite{JonesTiti1993,AzouaniOlsonTiti2014}. A growing literature studies continuous and discrete data assimilation for fluid models, including computational validation for the two-dimensional Navier--Stokes equations~\cite{GeshoOlsonTiti2016}, discrete-in-time assimilation for the same system~\cite{FoiasMondainiTiti2016}, analytical results for convection models under partial observations~\cite{FarhatLunasinTiti2017}, and abridged or approximate assimilation procedures using reduced sets of measurements or regularized models~\cite{FarhatLunasinTiti2016Abridged,LariosPei2020}. For phase-field dynamics, CDA-type ideas have also been explored for the Cahn--Hilliard equation itself~\cite{DiegelRebholz2022}. More directly relevant to the present work, CDA has been developed for the two-dimensional Cahn--Hilliard--Navier--Stokes system, where convergence to the reference trajectory can be established under suitable resolution and nudging conditions~\cite{YouXia2022}. On the numerical side, data-assimilation modifications of NSCH have also been coupled with energy-stable time discretizations driven by coarse observed data~\cite{SongXiaKimLi2024}. At the same time, recent work has emphasized that nudging-type recovery may fail in non-dissipative regimes where determining parameters do not exist, underscoring the importance of dissipativity and observability in the underlying model~\cite{TitiVictor2025}.

The purpose of the present paper is to formulate and study a continuous data assimilation framework for a coupled NSCH-type phase-field model augmented by a transported auxiliary field. The additional field is intended to represent transported microstructural information and contributes an extra stress term in the momentum balance, making the system a natural extension of NSCH models arising in complex-fluid and thrombus-inspired settings~\cite{LinLiuZhang2005,ZhengYazdaniLiHumphreyKarniadakis2020,KimTawriTemam2022,GrasselliPoiatti2023,GiorginiNdongmoNganaMedjoTemam2023}. We restrict attention to two spatial dimensions and focus on the formulation of the CDA model, its basic structural properties, a practical capped fully discrete finite element realization, and the resulting computational behavior.

More precisely, we first introduce the coupled NSCH--$\psi$ system and the associated nudging-based CDA dynamics driven by coarse-in-space observations through a general linear observation operator. We then record two basic structural properties of the continuous system: a formal energy law for the reference dynamics and an evolution law for the phase mean in the assimilated system. The observation mechanism is described abstractly through an interpolant $I_H$ satisfying an $H^2$-type approximation property, which is compatible with coarse spatial observations obtained from mesh coarsening and reconstruction.

Next, we present a fully discrete finite element splitting scheme for the CDA system. The discretization uses continuous quadratic elements for the phase, chemical potential, velocity, and auxiliary field, together with continuous linear elements for the pressure. In order to match the numerical implementation and to ensure admissible coefficient evaluation, we introduce a capped phase in the fully discrete scheme. By combining skew-symmetric transport forms with this capped coefficient structure, we prove one-step well-posedness of the fully discrete problem and derive a stepwise a priori estimate for the capped scheme. We emphasize that this estimate is not a closed discrete free-energy law and does not by itself yield a full convergence theory; rather, it provides a basic stability bound consistent with the split discretization used in computation.

Finally, we perform a series of numerical experiments illustrating the behavior of the coupled CDA dynamics. These tests examine synchronization from strongly mismatched initial data, the influence of boundary forcing, the effect of the nudging coefficients, and the role of observation resolution. We also include a coarse-indistinguishability experiment showing that identical coarse initial information may correspond to distinct fine-scale evolutions, while the CDA dynamics select and track the trajectory determined by the supplied time-dependent observations.

\section{A phase-field model of NSCH type and continuous data assimilation}
\label{sec:NSCH-DA}

In this section, we introduce the coupled Navier--Stokes--Cahn--Hilliard (NSCH) system considered in this work,
together with an additional transported auxiliary phase field and the corresponding continuous data assimilation (CDA) algorithm.
Throughout, $\omega\subset\mathbb{R}^2$ denotes a bounded simply connected domain with $C^2$ boundary, and $T>0$ is a fixed final time. For $t\in(0,T)$, the unknowns are the velocity
$\vec{u}=(u_1,u_2)$, the pressure $p$, the phase-field order parameter $\phi\in[0,1]$, the chemical potential $\mu$, and an auxiliary
phase field $\vec{\psi}=(\psi_1, \psi_2)$.

We regard the density $\rho>0$ and the Reynolds number $\mathrm{Re}>0$ as fixed. The phase-dependent viscosity
$\eta(\phi)$, the mobility $\nu(\phi)$, and the permeability $\kappa(\phi)$ are assumed to satisfy
\begin{equation}
\label{eq:coeff-ass}
0<\eta_{\min}\le \eta(\phi)\le \eta_{\max},\qquad
0\le\nu_{\min}\le \nu(\phi)\le \nu_{\max},\qquad
0<\kappa_{\min}\le \kappa(\phi)\le \kappa_{\max},
\end{equation}
for all admissible values of $\phi$. Moreover, we assume $\eta,\nu,\kappa\in C^{1}(\mathbb{R})$ and that their derivatives are bounded on bounded sets.
In particular, for any $M>0$ there exists $C_M>0$ such that
\begin{equation}
\label{eq:coeff-ass-1}
|\eta'(s)|+|\nu'(s)|+|\kappa'(s)| \le C_M \qquad \text{for all } |s|\le M.
\end{equation}

We denote by $\lambda>0$ the gradient-energy coefficient and by $\gamma>0$ the bulk potential coefficient. The time-scale parameters
$\tau>0$ (Cahn--Hilliard mobility) and $\epsilon>0$ (diffusivity scale for $\vec{\psi}$) are also fixed.

Let $F:\mathbb{R}\to\mathbb{R}$ be a double-well potential and denote by $f:=F'$ its derivative. We assume $F\in C^2(\mathbb R)$ and that there exist $q\ge3$ and constants $C,c_F>0$, $C_F\ge0$ such that
\begin{equation}\label{as:f-growth}
|f(s)|\le C(1+|s|^{q-1})\qquad\forall\,s\in\mathbb R,
\end{equation}
and
\begin{equation}\label{as:coerciveF}
F(s)\ge c_F|s|^{q}-C_F\qquad\forall\,s\in\mathbb R.
\end{equation}

We consider the following coupled system on $\omega\times(0,T)$:
\begin{equation}
\label{eq:strong}
\left\{
\begin{array}{l}
\rho\,\mathrm{Re}\Big(\partial_t\vec{u}+(\vec{u}\cdot\nabla)\vec{u}\Big)
+\nabla p
-\nabla\cdot\!\big(\eta(\phi)\nabla\vec{u}\big)
\\
\qquad\qquad\qquad\qquad
=-\lambda\,\nabla\cdot(\nabla\phi\otimes\nabla\phi)
+\nabla\cdot\!\big(\nu(\phi)\,(\nabla\vec{\psi})^{T}\nabla\vec{\psi}\big)
-\frac{\eta(\phi)}{\kappa(\phi)}(1-\phi)\,\vec{u},
\\[2pt]
\nabla\cdot\vec{u}=0,
\\[2pt]
\partial_t\vec{\psi}+(\vec{u}\cdot\nabla)\vec{\psi}
=\epsilon\,\nabla\cdot\!\big(\nu(\phi)\nabla\vec{\psi}\big),
\\[2pt]
\partial_t\phi+\vec{u}\cdot\nabla\phi=\tau\,\Delta\mu,
\\[2pt]
\mu=-\lambda\,\Delta\phi+\lambda\gamma\,f(\phi)+\frac{1}{2}\nu'(\phi)\,|\nabla\vec{\psi}|^2,
\end{array}
\right.
\end{equation}
The term $-\lambda\nabla\cdot(\nabla\phi\otimes\nabla\phi)$ is the stress contribution generated by the gradient energy of $\phi$,
while $\nabla\cdot\!\big(\nu(\phi)(\nabla\vec{\psi})^{T}\nabla\vec{\psi}\big)$ is the analogous stress contribution generated by
the auxiliary field $\vec{\psi}$. The resistance term $\frac{\eta(\phi)}{\kappa(\phi)}(1-\phi)\vec{u}$ penalises
the flow in regions where $\phi$ is small.

We prescribe the boundary conditions
\begin{equation}
\label{eq:bc}
\vec{u}=\vec{0}\quad \textup{on }\partial \omega\times(0,T),\qquad
\partial_{\vec{\nu}}\phi=\partial_{\vec{\nu}}\mu=0\quad \textup{on }\partial \omega\times(0,T),\qquad
\nu(\phi)\,\partial_{\vec{\nu}}\vec{\psi}=0\quad \textup{on }\partial \omega\times(0,T),
\end{equation}
and the initial data
\begin{equation}
\label{eq:ic}
\vec{u}(\cdot,0)=\vec{u}_0,\qquad \phi(\cdot,0)=\phi_0,\qquad \vec{\psi}(\cdot,0)=\vec{\psi}_0,\qquad \textup{in }\omega.
\end{equation}

We introduce the space of smooth solenoidal test fields
\[
\mathcal V:=\{\vec w\in \vec C_0^\infty(\omega):\nabla\!\cdot \vec w=0\}.
\]
We denote by $\mathbf H$ the closure of $\mathcal V$ in $\vec L^2(\omega)$, and by $\mathbf V$ the closure of $\mathcal V$ in $H_0^1(\omega)$.
It is standard that these spaces admit the characterizations
\[
\mathbf H=\{\vec w\in \vec L^2(\omega):\nabla\!\cdot\vec w=0,\ \vec w\cdot\vec\nu=0\ \text{on }\partial \omega\},
\qquad
\mathbf V=\{\vec w\in H_0^1(\omega):\nabla\!\cdot\vec w=0\}.
\]
Let $P:\vec L^2(\omega)\to \mathbf H$ be the Helmholtz--Leray orthogonal projection and define
the Stokes operator $A:=-P\Delta$ with domain
\[
D(A):=H^2(\omega)\cap \mathbf V.
\]

%============================================================

For any sufficiently smooth $\phi$, the stress admits the identity
\begin{equation}\label{eq:korteweg-id}
-\lambda\,\nabla\cdot(\nabla\phi\otimes\nabla\phi)
=
\lambda(-\Delta\phi)\,\nabla\phi-\nabla\Big(\frac{\lambda}{2}|\nabla\phi|^2\Big).
\end{equation}
Using the definition of the chemical potential in \eqref{eq:strong},
\[
\mu=-\lambda\,\Delta\phi+\lambda\gamma\,f(\phi)+\frac12\nu'(\phi)\,|\nabla\vec{\psi}|^2,
\]
and the relation $f=F'$, we may rewrite \eqref{eq:korteweg-id} as
\begin{equation}\label{eq:korteweg-mu}
-\lambda\,\nabla\cdot(\nabla\phi\otimes\nabla\phi)
=
\mu\,\nabla\phi
-\frac12\,\nu'(\phi)\,|\nabla\vec{\psi}|^2\,\nabla\phi
-\nabla\Big(\lambda\gamma\,F(\phi)+\frac{\lambda}{2}|\nabla\phi|^2\Big).
\end{equation}
Consequently, introducing the modified pressure
\begin{equation}\label{eq:ptilde}
\tilde p \;:=\; p+\lambda\gamma\,F(\phi)+\frac{\lambda}{2}|\nabla\phi|^2,
\end{equation}
the system \eqref{eq:strong} is equivalently written in the form
\begin{equation}
\label{eq:strong-korteweg}
\left\{
\begin{array}{l}
\rho\,\mathrm{Re}\Big(\partial_t\vec{u}+(\vec{u}\cdot\nabla)\vec{u}\Big)
+\nabla \tilde p
-\nabla\cdot\!\big(\eta(\phi)\nabla\vec{u}\big)
\\
\qquad\qquad\qquad\qquad
=\mu\,\nabla\phi
-\dfrac{1}{2}\nu'(\phi)\,|\nabla\vec{\psi}|^2\,\nabla\phi
+\nabla\cdot\!\big(\nu(\phi)\,(\nabla\vec{\psi})^{T}\nabla\vec{\psi}\big)
-\frac{\eta(\phi)}{\kappa(\phi)}(1-\phi)\,\vec{u},
\\[2pt]
\nabla\cdot\vec{u}=0,
\\[2pt]
\partial_t\vec{\psi}+(\vec{u}\cdot\nabla)\vec{\psi}
=\epsilon\,\nabla\cdot\!\big(\nu(\phi)\nabla\vec{\psi}\big),
\\[2pt]
\partial_t\phi+\vec{u}\cdot\nabla\phi=\tau\,\Delta\mu,
\\[2pt]
\mu=-\lambda\,\Delta\phi+\lambda\gamma\,f(\phi)+\frac{1}{2}\nu'(\phi)\,|\nabla\vec{\psi}|^2.
\end{array}
\right.
\end{equation}
\medskip

The formulations \eqref{eq:strong} and \eqref{eq:strong-korteweg} are algebraically equivalent and therefore share the same existence and uniqueness theory for strong solutions under \eqref{eq:coeff-ass} and mild regularity assumptions on $f$ (see, e.g., \cite{KimTawriTemam2022}).  We now introduce the continuous data assimilation (CDA) algorithm associated with \eqref{eq:strong-korteweg}, following the nudging
framework of Azouani--Olson--Titi~\cite{AzouaniOlsonTiti2014}. The objective is to recover the unknown reference
trajectory from coarse-in-space observations by evolving an assimilated state and adding linear feedback terms that relax the
observed large scales toward the available measurements.

Let $h>0$ denote the observational resolution and let $I_H$ be a linear interpolant mapping sufficiently regular fields on $\omega$
into an observation space. We assume that there exist constants $c_0,c_I>0$, independent of $h$, such that
\begin{equation}\label{eq:Ih-props}
\|w-I_H w\|_{L^2(\omega)}^2 \le \frac{1}{4}c_0^2\,h^{4}\,\|w\|_{H^{2}(\omega)}^2
\qquad \forall\, w\in H^{2}(\omega),
\end{equation}

In our computations, $I_H$ is realized by transferring fine-grid finite element fields onto a uniformly coarsened mesh. 

\begin{customprob}{$\mathcal{P}$}
Let $(\vec u,\tilde p,\phi,\mu,\vec\psi)$ denote the reference solution of \eqref{eq:strong}--\eqref{eq:ic}.
The assimilated unknowns $(\vec v,\tilde\pi,\varphi,\xi,\vec\zeta)$ are defined as the solution of the nudged NSCH system
\begin{equation}
\label{eq:strong-CDA}
\left\{
\begin{array}{l}
\rho\,\mathrm{Re}\Big(\partial_t\vec{v}+(\vec{v}\cdot\nabla)\vec{v}\Big)
+\nabla \tilde \pi
-\nabla\cdot\!\big(\eta(\varphi)\nabla\vec{v}\big)
\\
\qquad\qquad\qquad\qquad
=\xi\,\nabla\varphi
-\dfrac{1}{2}\nu'(\varphi)\,|\nabla\vec \zeta|^2\,\nabla\varphi
+\nabla\cdot\!\big(\nu(\varphi)\,(\nabla\vec \zeta)^{T}\nabla\vec \zeta\big)
-\frac{\eta(\varphi)}{\kappa(\varphi)}(1-\varphi)\,\vec{v}
+\alpha_u\,I_H(\vec u-\vec v),
\\[2pt]
\nabla\cdot\vec{v}=0,
\\[2pt]
\partial_t\vec \zeta+(\vec v\cdot\nabla)\vec \zeta
=\epsilon\,\nabla\cdot\!\big(\nu(\varphi)\nabla\vec \zeta\big)
+\alpha_\psi\,I_H(\vec \psi-\vec \zeta),
\\[2pt]
\partial_t\varphi+\vec v\cdot\nabla\varphi
=\tau\,\Delta\xi+\alpha_\phi\,I_H(\phi-\varphi),
\\[2pt]
\xi=-\lambda\,\Delta\varphi+\lambda\gamma\,f(\varphi)+\frac{1}{2}\nu'(\varphi)\,|\nabla\vec \zeta|^2,
\end{array}
\right.
\end{equation}
supplemented with the same boundary conditions as \eqref{eq:bc} with $(\vec u,\phi,\mu,\vec\psi)$ replaced by
$(\vec v,\varphi,\xi,\vec \zeta)$ and initial data
\begin{equation}\label{eq:ic-CDA}
\vec v(\cdot,0)=\vec v_0,\qquad \varphi(\cdot,0)=\varphi_0,\qquad \vec \zeta(\cdot,0)=\vec \zeta_0,
\end{equation}
which may be chosen independently of the reference initial data. Here $\alpha_u,\alpha_\phi,\alpha_\psi\ge 0$ are the nudging parameters. The feedback acts only through the coarse observation operator $I_H$. 

\end{customprob}
Based on the formal energetic structure of \eqref{eq:strong-CDA}, we define the energy by

\begin{equation}\label{eq:E-def}
\mathcal E(t):=\int_\omega\Big(
\frac{\rho\mathrm{Re}}2|\vec v|^2+\frac\lambda2|\nabla\varphi|^2+\lambda\gamma F(\varphi)
+\frac12\nu(\varphi)|\nabla\vec\zeta|^2\Big)\,dx.
\end{equation}

%===========================================================

We begin by recording the formal energy law and the behavior of the spatial mean. These identities explain the dissipative structure of the reference system and the role of the nudging terms in the assimilated dynamics.

\begin{theorem}
\label{prop:formal-energy}
Let $(\vec u,\tilde p,\phi,\mu,\vec\psi)$ be a sufficiently smooth solution of
\eqref{eq:strong-korteweg}--\eqref{eq:ic} satisfying the boundary conditions \eqref{eq:bc}.
Then the energy
\[
\mathcal E_{\rm ref}(t):=\int_\omega\Big(
\frac{\rho\mathrm{Re}}2|\vec u|^2
+\frac\lambda2|\nabla\phi|^2
+\lambda\gamma F(\phi)
+\frac12\nu(\phi)|\nabla\vec\psi|^2\Big)\,dx
\]
satisfies
\begin{equation}
\label{eq:formal-energy-law}
\frac{d}{dt}\mathcal E_{\rm ref}(t)
+\int_\omega \eta(\phi)|\nabla\vec u|^2\,dx
+\int_\omega \frac{\eta(\phi)}{\kappa(\phi)}(1-\phi)|\vec u|^2\,dx
+\tau\|\nabla\mu\|_{L^2(\omega)}^2
+\epsilon\|\nabla\cdot(\nu(\phi)\nabla\vec\psi)\|_{L^2(\omega)}^2
=0.
\end{equation}
\end{theorem}

\begin{proof}
We test the momentum equation by $\vec u$ and integrate over $\omega$. Using
$\nabla\cdot\vec u=0$, $\vec u|_{\partial\omega}=0$, and the boundary conditions, we obtain
\begin{align}
\frac{\rho\mathrm{Re}}2\frac{d}{dt}\|\vec u\|_{L^2}^2
+\int_\omega \eta(\phi)|\nabla\vec u|^2\,dx
+\int_\omega \frac{\eta(\phi)}{\kappa(\phi)}(1-\phi)|\vec u|^2\,dx
&=(\mu\nabla\phi,\vec u)
-\frac12\big(\nu'(\phi)|\nabla\vec\psi|^2\nabla\phi,\vec u\big)
\nonumber\\
&\quad
-\big(\nu(\phi)(\nabla\vec\psi)^T\nabla\vec\psi,\nabla\vec u\big).
\label{eq:proof-mom}
\end{align}
Here we used
\[
((\vec u\cdot\nabla)\vec u,\vec u)=0,
\qquad
(\nabla\tilde p,\vec u)=0.
\]

Next, testing the phase equation by $\mu$ gives
\begin{equation}
\label{eq:proof-phase1}
(\partial_t\phi,\mu)+(\vec u\cdot\nabla\phi,\mu)
+\tau\|\nabla\mu\|_{L^2}^2=0.
\end{equation}
Using the constitutive relation for $\mu$, we compute
\begin{align}
(\partial_t\phi,\mu)
&=
\lambda(\nabla\phi,\nabla\partial_t\phi)
+\lambda\gamma(f(\phi),\partial_t\phi)
+\frac12\big(\nu'(\phi)|\nabla\vec\psi|^2,\partial_t\phi\big)
\nonumber\\
&=
\frac{d}{dt}\int_\omega
\Big(\frac\lambda2|\nabla\phi|^2+\lambda\gamma F(\phi)\Big)\,dx
+\frac12\int_\omega \nu'(\phi)|\nabla\vec\psi|^2\,\partial_t\phi\,dx.
\label{eq:proof-phase2}
\end{align}

For the auxiliary field, we test the $\vec\psi$-equation by
$-\nabla\cdot(\nu(\phi)\nabla\vec\psi)$ and integrate by parts. This yields
\begin{align}
\frac12\frac{d}{dt}\int_\omega \nu(\phi)|\nabla\vec\psi|^2\,dx
-\frac12\int_\omega \nu'(\phi)|\nabla\vec\psi|^2\,\partial_t\phi\,dx
+\epsilon\|\nabla\cdot(\nu(\phi)\nabla\vec\psi)\|_{L^2}^2
\nonumber\\
\qquad
+\big((\vec u\cdot\nabla)\vec\psi,\,-\nabla\cdot(\nu(\phi)\nabla\vec\psi)\big)=0.
\label{eq:proof-psi1}
\end{align}
A direct integration by parts shows that
\begin{equation}
\label{eq:proof-psi-conv}
\big((\vec u\cdot\nabla)\vec\psi,\,-\nabla\cdot(\nu(\phi)\nabla\vec\psi)\big)
=
\big(\nu(\phi)(\nabla\vec\psi)^T\nabla\vec\psi,\nabla\vec u\big)
-\frac12\big(\nu'(\phi)|\nabla\vec\psi|^2\nabla\phi,\vec u\big).
\end{equation}

Finally, summing \eqref{eq:proof-mom}, \eqref{eq:proof-phase1}--\eqref{eq:proof-phase2},
and \eqref{eq:proof-psi1}--\eqref{eq:proof-psi-conv}, all mixed coupling terms cancel:
\[
(\mu\nabla\phi,\vec u)-(\vec u\cdot\nabla\phi,\mu)=0,
\]
and the $\nu'(\phi)|\nabla\vec\psi|^2$-terms also cancel exactly.
Therefore we obtain \eqref{eq:formal-energy-law}.
\end{proof}

\begin{theorem}
\label{prop:mean}
Assume $(\vec u,\tilde p,\phi,\mu,\vec\psi)$ is a sufficiently smooth solution of
\eqref{eq:strong-korteweg} and $(\vec v,\tilde\pi,\varphi,\xi,\vec\zeta)$ is a sufficiently smooth
solution of \eqref{eq:strong-CDA}. Assume also that the observation operator $I_H$ satisfies
\eqref{eq:Ih-props}. Then:
\begin{enumerate}
\item The reference phase mass is conserved:
\[
\int_\omega \phi(t)\,dx=\int_\omega \phi_0\,dx
\qquad \text{for all } t\in[0,T].
\]

\item The assimilated phase mass satisfies
\[
\frac{d}{dt}\int_\omega \varphi(t)\,dx
=
\alpha_\phi\int_\omega I_H(\phi-\varphi)\,dx.
\]

\item Let $e_\phi:=\phi-\varphi$. Then the mean error satisfies
\[
\frac{d}{dt}\int_\omega e_\phi\,dx+\alpha_\phi\int_\omega e_\phi\,dx
=
\alpha_\phi\int_\omega (e_\phi-I_H e_\phi)\,dx.
\]
Consequently,
\[
\left|
\frac{d}{dt}\big(\overline\phi-\overline\varphi\big)
+\alpha_\phi(\overline\phi-\overline\varphi)
\right|
\le
\frac{c_0}{2}\,\alpha_\phi\,|\omega|^{-1/2} h^2
\|e_\phi\|_{H^2(\omega)},
\]
where
\[
\overline\phi:=|\omega|^{-1}\int_\omega\phi\,dx,
\qquad
\overline\varphi:=|\omega|^{-1}\int_\omega\varphi\,dx.
\]
\end{enumerate}
\end{theorem}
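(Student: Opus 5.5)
The plan is to integrate the two phase equations over $\omega$ and use the boundary conditions \eqref{eq:bc} together with incompressibility; the only non-routine ingredient is the interpolation estimate \eqref{eq:Ih-props}. For part (1), I integrate $\partial_t\phi+\vec u\cdot\nabla\phi=\tau\Delta\mu$ over $\omega$. The diffusion term gives
\[
\int_\omega\Delta\mu\,dx=\int_{\partial\omega}\partial_{\vec\nu}\mu\,ds=0
\]
by the Neumann condition. For the transport term, $\nabla\cdot\vec u=0$ gives $\vec u\cdot\nabla\phi=\nabla\cdot(\phi\vec u)$, so
\[
\int_\omega\vec u\cdot\nabla\phi\,dx=\int_{\partial\omega}\phi\,\vec u\cdot\vec\nu\,ds=0
\]
since $\vec u=\vec 0$ on $\partial\omega$. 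Hence $\frac{d}{dt}\int_\omega\phi\,dx=0$, and integrating in time from $0$ yields the conservation statement.

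Part (2) is the same computation applied to the $\varphi$-equation in \eqref{eq:strong-CDA}. The assimilated fields inherit the boundary conditions $\vec v=\vec 0$ and $\partial_{\vec\nu}\xi=0$, and $\nabla\cdot\vec v=0$ holds. The transport and diffusion integrals therefore vanish exactly as before, leaving only $\alpha_\phi\int_\omega I_H(\phi-\varphi)\,dx$.

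For part (3), I subtract (2) from (1), which gives
\[
\frac{d}{dt}\int_\omega e_\phi\,dx=-\alpha_\phi\int_\omega I_He_\phi\,dx,
\]
using linearity of $I_H$. Next I add and subtract $\alpha_\phi\int_\omega e_\phi\,dx$ to obtain the stated identity with right-hand side $\alpha_\phi\int_\omega(e_\phi-I_He_\phi)\,dx$. Dividing by $|\omega|$ converts it to an identity for $\overline\phi-\overline\varphi$. I then estimate the right side by Cauchy--Schwarz:
\[
\Big|\int_\omega(e_\phi-I_He_\phi)\,dx\Big|\le|\omega|^{1/2}\|e_\phi-I_He_\phi\|_{L^2}.
\]
Applying \eqref{eq:Ih-props} in square-root form bounds this by $|\omega|^{1/2}\frac{c_0}{2}h^2\|e_\phi\|_{H^2}$, which requires $e_\phi\in H^2$ and is guaranteed by the smoothness hypothesis. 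After dividing by $|\omega|$, the bound is $\frac{c_0}{2}\alpha_\phi|\omega|^{-1/2}h^2\|e_\phi\|_{H^2}$, which is the claim.

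There is no real obstacle here. The only points requiring care are that the boundary terms vanish, which uses the no-slip and Neumann conditions together with incompressibility, and that the factor $|\omega|^{1/2}/|\omega|=|\omega|^{-1/2}$ is tracked correctly.
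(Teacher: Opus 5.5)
Your proposal is correct and follows essentially the same argument as the paper: integrate both phase equations over $\omega$, eliminate the diffusion and transport integrals via the Neumann condition on the chemical potential together with incompressibility and the velocity boundary condition, subtract, then add and subtract $\alpha_\phi\int_\omega e_\phi\,dx$, and bound the remainder by Cauchy--Schwarz and the square root of \eqref{eq:Ih-props} before dividing by $|\omega|$. You use the full no-slip condition where the paper only needs $\vec u\cdot\vec\nu=0$, and the appeal to linearity of $I_H$ is unnecessary since $I_H(\phi-\varphi)=I_He_\phi$ by definition; neither point affects correctness.
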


\begin{proof}
Integrating the reference phase equation over $\omega$ and using
$\partial_{\nu}\mu=0$ on $\partial\omega$, together with $\vec u\cdot\vec\nu=0$ on $\partial\omega$ and
$\nabla\cdot\vec u=0$, we obtain
\[
\frac{d}{dt}\int_\omega \phi\,dx
=
\tau\int_\omega \Delta\mu\,dx-\int_\omega \vec u\cdot\nabla\phi\,dx=0.
\]
This proves the conservation of the reference phase mass.

Similarly, integrating the assimilated phase equation gives
\[
\frac{d}{dt}\int_\omega \varphi\,dx
=
\tau\int_\omega \Delta\xi\,dx-\int_\omega \vec v\cdot\nabla\varphi\,dx
+\alpha_\phi\int_\omega I_H(\phi-\varphi)\,dx
=
\alpha_\phi\int_\omega I_H(\phi-\varphi)\,dx.
\]
Thus, with $e_\phi=\phi-\varphi$,
\[
\frac{d}{dt}\int_\omega e_\phi\,dx
=
-\alpha_\phi\int_\omega I_H e_\phi\,dx,
\]
and therefore
\[
\frac{d}{dt}\int_\omega e_\phi\,dx+\alpha_\phi\int_\omega e_\phi\,dx
=
\alpha_\phi\int_\omega (e_\phi-I_H e_\phi)\,dx.
\]

Finally, by Cauchy--Schwarz and \eqref{eq:Ih-props},
\[
\left|\int_\omega (e_\phi-I_H e_\phi)\,dx\right|
\le
|\omega|^{1/2}\|e_\phi-I_H e_\phi\|_{L^2(\omega)}
\le
\frac{c_0}{2}|\omega|^{1/2}h^2\|e_\phi\|_{H^2(\omega)}.
\]
Dividing by $|\omega|$ yields
\[
\left|
\frac{d}{dt}\big(\overline\phi-\overline\varphi\big)
+\alpha_\phi(\overline\phi-\overline\varphi)
\right|
\le
\frac{c_0}{2}\,\alpha_\phi\,|\omega|^{-1/2} h^2
\|e_\phi\|_{H^2(\omega)}.
\]
This completes the proof.
\end{proof}

\begin{remark}
The previous theorem shows that, unlike the reference phase mass, the assimilated phase mean is generally not conserved unless the observation operator has an additional mean-preserving property. Thus the coarse-observation mechanism can influence the large-scale phase balance even when the reference dynamics conserve mass.
\end{remark}

\section{Finite element discretization and one-step well-posedness}
\label{sec:fem}

We now introduce a fully discrete finite element splitting scheme used in the numerical experiments. Our goal here is not to develop a full numerical convergence theory, but rather to record the discretization and establish one-step solvability. Let $\mathcal T_h$ be a conforming triangulation of $\omega$. For each triangle $K\in\mathcal T_h$, let $P_r(K)$ denote the space of scalar polynomials on $K$ of total degree at most $r$. In particular, $P_1(K)$ and $P_2(K)$ are the standard linear and quadratic Lagrange finite elements on $K$. We introduce the finite element spaces
\[
V_h:=\{\,\vartheta_h\in C^0(\overline\omega):\ \vartheta_h|_K\in P_2(K)\ \forall K\in\mathcal T_h\,\},
\]
\[
Q_h:=\{\,q_h\in C^0(\overline\omega):\ q_h|_K\in P_1(K)\ \forall K\in\mathcal T_h\,\},
\]
\[
Q_h^0:=\Bigl\{q_h\in Q_h:\int_\omega q_h\,dx=0\Bigr\},
\]
\[
\vec V_h:=\{\vec w_h\in [V_h]^2:\ \vec w_h=\vec 0 \text{ on }\partial\omega\},
\qquad
\vec W_h:=[V_h]^2.
\]
\begin{customprob}{$\mathcal{P}_{h}$}
\label{prob:disc-NSCH}

Let $t^n:=n\Delta t$ for $n=0,\dots,N$. Assume that coarse observations of the reference fields
$(\vec u^n,\phi^n,\vec\psi^n)$ are available through $I_H(\vec u^n)$, $I_H(\phi^n)$, and $I_H(\vec\psi^n)$.

The CDA finite element unknowns are
\[
(\vec v_h^{\,n},\tilde\pi_h^{\,n},\varphi_h^{\,n},\xi_h^{\,n},\vec\zeta_h^{\,n})
\in \vec V_h\times Q_h^0\times V_h\times V_h\times \vec W_h .
\]
Given initial data $(\vec v_h^{\,0},\varphi_h^{\,0},\vec\zeta_h^{\,0})$ and an initial pressure guess
$\tilde\pi_h^{\,0}\in Q_h^0$, compute
$(\vec v_h^{\,n+1},\tilde\pi_h^{\,n+1},\varphi_h^{\,n+1},\xi_h^{\,n+1},\vec\zeta_h^{\,n+1})$
for $n=0,\dots,N-1$ by the following splitting procedure.
The no-slip condition for $\vec v_h$ is imposed strongly, while Neumann conditions for
$\varphi_h$, $\xi_h$, and $\vec\zeta_h$ are enforced naturally in the weak forms.

For $\vec a\in \vec V_h$, $r,s\in V_h$, $\vec z,\vec\theta\in \vec W_h$, and
$\vec b,\vec w\in \vec V_h$, define
\[
b_\phi(\vec a;r,s)
:=
-\big(r\vec a,\nabla s\big)
-\frac12\big((\nabla\!\cdot\vec a)\,r,s\big),
\]
\[
b_\psi(\vec a;\vec z,\vec\theta)
:=
\big((\vec a\cdot\nabla)\vec z,\vec\theta\big)
+\frac12\big((\nabla\!\cdot\vec a)\,\vec z,\vec\theta\big),
\]
and
\[
c(\vec a;\vec b,\vec w)
:=
\big((\vec a\cdot\nabla)\vec b,\vec w\big)
+\frac12\big((\nabla\!\cdot\vec a)\,\vec b,\vec w\big).
\]
Here
\[
\big((\nabla\!\cdot\vec a)\,\vec z,\vec\theta\big)
:=\sum_{j=1}^{2}\big((\nabla\!\cdot\vec a)\,z_j,\theta_j\big).
\]

Define the truncation operator
\[
\mathcal T(s):=\min\{1,\max\{0,s\}\}, \qquad s\in\mathbb R.
\]
For each time level \(n\), we set
\[
\widehat\varphi_h^{\,n}:=\mathcal T(\varphi_h^{\,n}).
\]
Thus \(\varphi_h^{\,n}\in V_h\) remains the finite element unknown, while
\(\widehat\varphi_h^{\,n}\in[0,1]\) is the capped phase used in the
evaluation of the phase-dependent coefficients. This reflects the numerical
implementation in which the phase is clipped to \([0,1]\) at each time step
before forming the coefficients.

\medskip
\noindent (i) Cahn--Hilliard step.
Given \((\vec v_h^{\,n},\varphi_h^{\,n},\vec\zeta_h^{\,n})\), find \((\varphi_h^{\,n+1},\xi_h^{\,n+1})\in V_h\times V_h\) such that, for all
\((\vartheta_h,\chi_h)\in V_h\times V_h\),
\begin{equation}
\label{eq:disc-CH-weak-CDA-final}
\begin{aligned}
\Big(\frac{\varphi_h^{\,n+1}-\varphi_h^{\,n}}{\Delta t},\vartheta_h\Big)
+b_\phi(\vec v_h^{\,n};\varphi_h^{\,n+1},\vartheta_h)
+\tau\,(\nabla\xi_h^{\,n+1},\nabla\vartheta_h)
&=\big(\alpha_\phi\,I_H(\phi^{n}-\varphi_h^{\,n}),\vartheta_h\big),
\\
(\xi_h^{\,n+1},\chi_h)
-\lambda(\nabla\varphi_h^{\,n+1},\nabla\chi_h)
&=\lambda\gamma\big(f(\widehat\varphi_h^{\,n}),\chi_h\big)
+\frac12\big(\nu'(\widehat\varphi_h^{\,n})|\nabla\vec\zeta_h^{\,n}|^2,\chi_h\big).
\end{aligned}
\end{equation}

\medskip
\noindent (ii) Auxiliary-field step.
Given \((\vec v_h^{\,n},\varphi_h^{\,n+1},\vec\zeta_h^{\,n})\), find \(\vec\zeta_h^{\,n+1}\in \vec W_h\) such that, for all \(\vec\theta_h\in \vec W_h\),
\begin{equation}
\label{eq:disc-psi-weak-CDA-final}
\Big(\frac{\vec\zeta_h^{\,n+1}-\vec\zeta_h^{\,n}}{\Delta t},\vec\theta_h\Big)
+b_\psi(\vec v_h^{\,n};\vec\zeta_h^{\,n+1},\vec\theta_h)
+\epsilon\big(\nu(\widehat\varphi_h^{\,n+1})\nabla\vec\zeta_h^{\,n+1},\nabla\vec\theta_h\big)
=\big(\alpha_\psi\,I_H(\vec\psi^{\,n}-\vec\zeta_h^{\,n}),\vec\theta_h\big).
\end{equation}

\medskip
\noindent (iii) Navier--Stokes step and pressure correction.
Given \((\vec v_h^{\,n},\tilde\pi_h^{\,n},\tilde\pi_h^{\,n-1},\varphi_h^{\,n+1},\xi_h^{\,n+1},\vec\zeta_h^{\,n+1})\),
find \(\vec v_h^{\,n+1}\in \vec V_h\) such that, for all \(\vec w_h\in \vec V_h\),
\begin{equation}
\label{eq:disc-NS-weak-CDA-final}
\begin{aligned}
&\Big(\frac{\rho\,\mathrm{Re}}{\Delta t}\vec v_h^{\,n+1},\vec w_h\Big)
+\rho\,\mathrm{Re}\,c(\vec v_h^{\,n};\vec v_h^{\,n+1},\vec w_h)
+\big(\eta(\widehat\varphi_h^{\,n+1})\mathbf \nabla(\vec v_h^{\,n+1}),\mathbf \nabla(\vec w_h)\big)
+\Big(\frac{\eta(\widehat\varphi_h^{\,n+1})}{\kappa(\widehat\varphi_h^{\,n+1})}(1-\widehat\varphi_h^{\,n+1})\vec v_h^{\,n+1},\vec w_h\Big)
\\
&\qquad
=\Big(\frac{\rho\,\mathrm{Re}}{\Delta t}\vec v_h^{\,n},\vec w_h\Big)
+\big(2\tilde\pi_h^{\,n}-\tilde\pi_h^{\,n-1},\nabla\cdot \vec w_h\big)
+\big(\xi_h^{\,n+1}\nabla\varphi_h^{\,n+1},\vec w_h\big)
-\frac12\big(\nu'(\widehat\varphi_h^{\,n+1})|\nabla\vec\zeta_h^{\,n+1}|^2\nabla\varphi_h^{\,n+1},\vec w_h\big)
\\
&\qquad\qquad
-\big(\nu(\widehat\varphi_h^{\,n+1})(\nabla\vec\zeta_h^{\,n+1})^{T}\nabla\vec\zeta_h^{\,n+1}, \nabla \vec w_h\big)
+\big(\alpha_u\,I_H(\vec u^{\,n}-\vec v_h^{\,n}),\vec w_h\big).
\end{aligned}
\end{equation}

Finally, compute $\tilde\pi_h^{\,n+1}\in Q_h^0$ from the Poisson correction: find
$\tilde\pi_h^{\,n+1}\in Q_h^0$ such that, for all $q_h\in Q_h^0$,
\begin{equation}
\label{eq:disc-press-CDA-final}
(\nabla \tilde\pi_h^{\,n+1},\nabla q_h)
=\Big(\frac{\rho\,\mathrm{Re}}{\Delta t}\nabla\cdot\vec v_h^{\,n+1},q_h\Big)
-(\nabla \tilde\pi_h^{\,n},\nabla q_h).
\end{equation}
\bqed
\end{customprob}

Before stating the one-step solvability result, we note that schemes of this
general type have been widely used in the numerical treatment of diffuse-interface
fluid models. For the Cahn--Hilliard--Navier--Stokes system and related
phase-field flow equations, unique solvability, linearized solvability, and
discrete stability have been proved for convex-splitting, projection-based,
and auxiliary-variable discretizations; see, for example,
\cite{GaoWang2014,HanWang2015,ChenZhao2020,SongXiaKimLi2024}.
Classical pressure-correction and fractional-step ideas underlying the velocity
update also go back to the projection methods of Chorin and Temam
\cite{Chorin1968,Temam1969}; see also \cite{GiraultRaviart1986,Temam1977}
for general background.
In the present setting, once the quantities at time level $n$ are fixed, each
subproblem in Problem~\ref{prob:disc-NSCH} is linear in the new unknowns, the
transport forms are skew-symmetric, and the capped phase
$\widehat\varphi_h^{\,n}$ keeps the coefficient evaluations within the
admissible interval $[0,1]$. This leads to the following one-step
well-posedness statement.

\begin{theorem}
\label{prop:disc-wp}
Assume that $\eta,\nu,\kappa$ satisfy \eqref{eq:coeff-ass}. Let
\[
(\vec v_h^{\,n},\tilde\pi_h^{\,n},\tilde\pi_h^{\,n-1},\varphi_h^{\,n},\vec\zeta_h^{\,n})
\in \vec V_h\times Q_h^0\times Q_h^0\times V_h\times \vec W_h
\]
be given, and define
\[
\widehat\varphi_h^{\,n}:=\mathcal T(\varphi_h^{\,n}).
\]
Then the subproblems in Problem~\ref{prob:disc-NSCH} admit unique solutions
\[
(\varphi_h^{\,n+1},\xi_h^{\,n+1})\in V_h\times V_h,\qquad
\vec\zeta_h^{\,n+1}\in \vec W_h,\qquad
\vec v_h^{\,n+1}\in \vec V_h,\qquad
\tilde\pi_h^{\,n+1}\in Q_h^0.
\]
\end{theorem}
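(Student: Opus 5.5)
Each subproblem is a square linear system in finite dimensions, since all nonlinear coefficients are evaluated at known data (the capped phases $\widehat\varphi_h^{\,n}$, $\widehat\varphi_h^{\,n+1}$ and the previous velocity $\vec v_h^{\,n}$). So for each one it suffices to prove uniqueness: if the data are set to zero, the only solution is zero. I will treat the steps sequentially (i)$\to$(ii)$\to$(iii)$\to$ pressure. At each stage the previously computed quantities, such as $\varphi_h^{\,n+1}$ and hence $\widehat\varphi_h^{\,n+1}=\mathcal T(\varphi_h^{\,n+1})\in[0,1]$, are fixed finite element functions. The right-hand sides are then well-defined bounded linear functionals, because $f,\nu'$ are continuous and bounded on $[0,1]$ and all integrands are piecewise polynomial or bounded.

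The Cahn--Hilliard step (i) is the main obstacle, because it is a mixed saddle-type system and $b_\phi$ is skew-symmetric only up to the $\nabla\cdot\vec v_h^{\,n}$ correction. First I check skew-symmetry. Integrating by parts with $\vec v_h^{\,n}=0$ on $\partial\omega$ gives $-(r\vec a,\nabla r)=\tfrac12((\nabla\cdot\vec a)r,r)$, hence $b_\phi(\vec a;r,r)=0$. The same computation gives $b_\psi(\vec a;\vec z,\vec z)=0$ and $c(\vec a;\vec b,\vec b)=0$.

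For the homogeneous system in (i), I test the first equation with $\vartheta_h=\lambda\varphi$ and the second with $\chi_h=\xi$. I then test the second equation again with $\chi_h=\varphi$, which gives $(\xi,\varphi)=\lambda\|\nabla\varphi\|^2$. A cleaner route is to test the first equation with $\vartheta_h=\xi$ and the second with $\chi_h=\varphi/\Delta t$, and subtract. This yields
\[
\frac{\lambda}{\Delta t}\|\nabla\varphi\|^2+\tau\|\nabla\xi\|^2+b_\phi(\vec v_h^{\,n};\varphi,\xi)=0,
\]
and here $b_\phi(\cdot;\varphi,\xi)$ is not sign-definite. So I will use the first choice instead. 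Testing with $\vartheta_h=\varphi$ gives
\[
\frac1{\Delta t}\|\varphi\|^2+\tau(\nabla\xi,\nabla\varphi)=0 .
\]
The second equation tested with $\chi_h=\xi$ gives $\|\xi\|^2=\lambda(\nabla\varphi,\nabla\xi)$. Combining the two,
\[
\frac{\lambda}{\Delta t}\|\varphi\|^2+\tau\|\xi\|^2=0,
\]
so $\varphi=\xi=0$. Neither the skew term nor the sign of the mobility enters here, only $\tau,\lambda>0$.

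Step (ii) is a linear problem for $\vec\zeta$. Testing the homogeneous problem with $\vec\zeta$ gives
\[
\Delta t^{-1}\|\vec\zeta\|^2+\epsilon\|\nu(\widehat\varphi_h^{\,n+1})^{1/2}\nabla\vec\zeta\|^2=0,
\]
since $b_\psi$ vanishes on the diagonal and $\nu\ge0$. Hence $\vec\zeta=0$. Step (iii) works the same way with the test function $\vec v$. The bilinear form is coercive on $\vec V_h$: the convective term $c$ vanishes on the diagonal, $\eta\ge\eta_{\min}>0$, and the resistance coefficient $\eta/\kappa\,(1-\widehat\varphi)$ is nonnegative because $\widehat\varphi\in[0,1]$. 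This is exactly where the capping is needed. Lax--Milgram, or equivalently injectivity in finite dimensions, then gives a unique $\vec v_h^{\,n+1}$.

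Finally, the pressure correction is a Neumann-type Poisson problem on $Q_h^0$. The form $(\nabla q,\nabla q)$ is an inner product on $Q_h^0$ by the discrete Poincaré--Wirtinger inequality: a continuous piecewise-linear function with zero gradient is constant on the connected domain $\omega$, hence zero in $Q_h^0$. Unique solvability of the pressure correction follows.
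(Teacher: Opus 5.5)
Your proof is correct and matches the paper's argument. You reduce each square linear subproblem to uniqueness for the homogeneous problem and test the Cahn--Hilliard pair with $\vartheta_h=\varphi$, $\chi_h=\xi$; the paper uses $\chi_h=\tfrac{\tau}{\lambda}\xi$, which is the same up to scaling. You use $b_\psi$ and $c$ vanishing on the diagonal, together with $\nu\ge 0$, $\eta\ge\eta_{\min}$ and $1-\widehat\varphi_h^{\,n+1}\ge 0$, for steps (ii)--(iii), and close the pressure step via $\|\nabla\Pi\|=0$ plus the zero-mean constraint, exactly as the paper does.

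Two small wording points. The capping is already needed in step (ii), since \eqref{eq:coeff-ass} only gives $\nu\ge 0$ on admissible values. In step (i), the skew term does enter: it vanishes because $b_\phi(\vec v_h^{\,n};\varphi,\varphi)=0$, so it is not true that it plays no role.
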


\begin{proof}
Since all finite element spaces are finite dimensional and each subproblem is linear in the new unknowns, it is enough to prove uniqueness for the associated homogeneous problems.

Assume that
\[
(\varphi_{h,1}^{\,n+1},\xi_{h,1}^{\,n+1}),
\qquad
(\varphi_{h,2}^{\,n+1},\xi_{h,2}^{\,n+1})
\]
are two solutions of \eqref{eq:disc-CH-weak-CDA-final}. Define
\[
\Phi_h:=\varphi_{h,1}^{\,n+1}-\varphi_{h,2}^{\,n+1},
\qquad
\Xi_h:=\xi_{h,1}^{\,n+1}-\xi_{h,2}^{\,n+1}.
\]
Then $(\Phi_h,\Xi_h)\in V_h\times V_h$ satisfies
\begin{equation}
\label{eq:CH-hom-final}
\begin{aligned}
\Big(\frac{\Phi_h}{\Delta t},\vartheta_h\Big)
+b_\phi(\vec v_h^{\,n};\Phi_h,\vartheta_h)
+\tau(\nabla\Xi_h,\nabla\vartheta_h)&=0,
\\
(\Xi_h,\chi_h)-\lambda(\nabla\Phi_h,\nabla\chi_h)&=0,
\end{aligned}
\end{equation}
for all $(\vartheta_h,\chi_h)\in V_h\times V_h$.
Choosing $\vartheta_h=\Phi_h$ and $\chi_h=\dfrac{\tau}{\lambda}\Xi_h$, and using
\[
b_\phi(\vec v_h^{\,n};\Phi_h,\Phi_h)=0,
\]
we obtain
\[
\frac1{\Delta t}\|\Phi_h\|_{L^2(\omega)}^2
+\tau(\nabla\Xi_h,\nabla\Phi_h)=0,
\]
and
\[
\frac{\tau}{\lambda}\|\Xi_h\|_{L^2(\omega)}^2
-\tau(\nabla\Phi_h,\nabla\Xi_h)=0.
\]
Adding these identities yields
\[
\frac1{\Delta t}\|\Phi_h\|_{L^2(\omega)}^2
+\frac{\tau}{\lambda}\|\Xi_h\|_{L^2(\omega)}^2=0.
\]
Hence $\Phi_h=0$ and $\Xi_h=0$. Therefore the Cahn--Hilliard subproblem is uniquely solvable.

Now assume that $\vec\zeta_{h,1}^{\,n+1}$ and $\vec\zeta_{h,2}^{\,n+1}$ are two solutions of
\eqref{eq:disc-psi-weak-CDA-final}, and define
\[
\vec Z_h:=\vec\zeta_{h,1}^{\,n+1}-\vec\zeta_{h,2}^{\,n+1}\in \vec W_h.
\]
Since both solutions are computed with the same $\varphi_h^{\,n+1}$, the same capped phase
\[
\widehat\varphi_h^{\,n+1}:=\mathcal T(\varphi_h^{\,n+1})
\]
appears in the coefficient. Thus $\vec Z_h$ satisfies
\[
\Big(\frac{\vec Z_h}{\Delta t},\vec\theta_h\Big)
+b_\psi(\vec v_h^{\,n};\vec Z_h,\vec\theta_h)
+\epsilon\big(\nu(\widehat\varphi_h^{\,n+1})\nabla\vec Z_h,\nabla\vec\theta_h\big)=0
\qquad \forall\,\vec\theta_h\in \vec W_h.
\]
Taking $\vec\theta_h=\vec Z_h$ and using
\[
b_\psi(\vec v_h^{\,n};\vec Z_h,\vec Z_h)=0,
\]
we obtain
\[
\frac1{\Delta t}\|\vec Z_h\|_{L^2(\omega)}^2
+\epsilon\int_\omega \nu(\widehat\varphi_h^{\,n+1})|\nabla\vec Z_h|^2\,dx=0.
\]
Because $0\le \widehat\varphi_h^{\,n+1}\le 1$ and \eqref{eq:coeff-ass} holds on admissible values,
\[
\nu(\widehat\varphi_h^{\,n+1})\ge \nu_{\min}\ge 0.
\]
Hence
\[
\frac1{\Delta t}\|\vec Z_h\|_{L^2(\omega)}^2
+\epsilon\nu_{\min}\|\nabla\vec Z_h\|_{L^2(\omega)}^2=0,
\]
and therefore $\vec Z_h=0$. Thus the auxiliary-field subproblem is uniquely solvable.

Next assume that $\vec v_{h,1}^{\,n+1}$ and $\vec v_{h,2}^{\,n+1}$ are two solutions of
\eqref{eq:disc-NS-weak-CDA-final}, and define
\[
\vec U_h:=\vec v_{h,1}^{\,n+1}-\vec v_{h,2}^{\,n+1}\in \vec V_h.
\]
Since both solutions are computed with the same $\varphi_h^{\,n+1}$, $\xi_h^{\,n+1}$, and $\vec\zeta_h^{\,n+1}$, and therefore with the same capped phase
\[
\widehat\varphi_h^{\,n+1}:=\mathcal T(\varphi_h^{\,n+1}),
\]
the difference $\vec U_h$ satisfies
\begin{align*}
&\Big(\frac{\rho\,\mathrm{Re}}{\Delta t}\vec U_h,\vec w_h\Big)
+\rho\,\mathrm{Re}\,c(\vec v_h^{\,n};\vec U_h,\vec w_h)
+\big(\eta(\widehat\varphi_h^{\,n+1})\mathbf \nabla(\vec U_h),\mathbf \nabla(\vec w_h)\big)
\\
&\qquad
+\Big(\frac{\eta(\widehat\varphi_h^{\,n+1})}{\kappa(\widehat\varphi_h^{\,n+1})}
(1-\widehat\varphi_h^{\,n+1})\vec U_h,\vec w_h\Big)=0
\qquad \forall\,\vec w_h\in \vec V_h.
\end{align*}
Choosing $\vec w_h=\vec U_h$ and using
\[
c(\vec v_h^{\,n};\vec U_h,\vec U_h)=0,
\]
we get
\[
\frac{\rho\,\mathrm{Re}}{\Delta t}\|\vec U_h\|_{L^2(\omega)}^2
+\int_\omega \eta(\widehat\varphi_h^{\,n+1})|\mathbf\nabla \vec U_h|^2\,dx
+\int_\omega \frac{\eta(\widehat\varphi_h^{\,n+1})}{\kappa(\widehat\varphi_h^{\,n+1})}
(1-\widehat\varphi_h^{\,n+1})|\vec U_h|^2\,dx=0.
\]
Since
\[
\eta(\widehat\varphi_h^{\,n+1})\ge \eta_{\min}>0,\qquad
\kappa(\widehat\varphi_h^{\,n+1})\ge \kappa_{\min}>0,\qquad
0\le 1-\widehat\varphi_h^{\,n+1}\le 1,
\]
all three terms are nonnegative. Hence $\vec U_h=0$, and the velocity subproblem is uniquely solvable.

Finally, suppose \(\tilde\pi_{h,1}^{\,n+1}\) and \(\tilde\pi_{h,2}^{\,n+1}\) are two solutions of
\eqref{eq:disc-press-CDA-final} in \(Q_h^0\), and define
\[
\Pi_h:=\tilde\pi_{h,1}^{\,n+1}-\tilde\pi_{h,2}^{\,n+1}\in Q_h^0.
\]
Then
\[
(\nabla \Pi_h,\nabla q_h)=0
\qquad \forall\,q_h\in Q_h^0.
\]
Taking \(q_h=\Pi_h\), we obtain
\[
\|\nabla \Pi_h\|_{L^2(\omega)}^2=0.
\]
Thus \(\Pi_h\) is constant on \(\omega\). Since \(\Pi_h\in Q_h^0\), it follows that \(\Pi_h=0\). Therefore the pressure correction is uniquely solvable.
\end{proof}

Before stating the stepwise estimate, we briefly place the argument in the
context of existing stability analyses for related phase-field fluid schemes.
For the Cahn--Hilliard--Navier--Stokes system, several distinct mechanisms have
been used to derive discrete stability. Han and Wang combined a convex-splitting
treatment of the Cahn--Hilliard part with a pressure-projection step for the
Navier--Stokes part and obtained a modified discrete energy law~\cite{HanWang2015}.
Shen and Yang constructed decoupled energy-stable schemes based on stabilization
and convex splitting~\cite{ShenYang2015}, while Chen and Shen developed a
decoupled fully discrete adaptive scheme that is unconditionally energy
stable~\cite{ChenShen2016}. Chen and Zhao proposed a second-order linear method
that preserves an energy dissipation law and whose linear systems are uniquely
solvable~\cite{ChenZhao2020}. More recently, Zhao and Han reformulated the
Cahn--Hilliard--Navier--Stokes system into a constraint gradient-flow form,
which leads to second-order decoupled energy-stable schemes in the original
variables~\cite{ZhaoHan2021}. In a broader gradient-flow setting, Shen, Xu, and
Yang introduced the scalar auxiliary variable approach, which has become a
standard device for constructing efficient energy-stable time
discretizations~\cite{ShenXuYang2018}. Motivated by these developments, we do
not seek here a closed discrete free-energy law for the split capped scheme in
Problem~\ref{prob:disc-NSCH}. Instead, by testing the Cahn--Hilliard substep
with the discrete chemical potential, the auxiliary-field substep with the
updated auxiliary variable, and the velocity substep with the updated velocity,
and then exploiting the skew-symmetry of the transport forms together with
Young's inequality, we obtain the following stepwise a priori estimate.

\begin{theorem}
\label{thm:stepwise-stability-capped}
Assume that $\eta,\nu,\kappa$ satisfy \eqref{eq:coeff-ass}, and let
\[
(\vec v_h^{\,n},\tilde\pi_h^{\,n},\varphi_h^{\,n},\xi_h^{\,n},\vec\zeta_h^{\,n})
\in \vec V_h\times Q_h^0\times V_h\times V_h\times \vec W_h,
\qquad n=0,1,\dots,N,
\]
be generated by Problem~\ref{prob:disc-NSCH}. For each $n$, define
\[
\widehat\varphi_h^{\,n}:=\mathcal T(\varphi_h^{\,n})
=\min\{1,\max\{0,\varphi_h^{\,n}\}\}.
\]

Set
\[
\mathcal E_h^n
:=
\frac{\rho\,\mathrm{Re}}{2}\|\vec v_h^{\,n}\|_{L^2(\omega)}^2
+\frac12\|\varphi_h^{\,n}\|_{L^2(\omega)}^2
+\frac12\|\vec\zeta_h^{\,n}\|_{L^2(\omega)}^2,
\]
\begin{align*}
\mathcal D_h^{n+1}
:={}&
\frac{\tau}{4\lambda}\|\xi_h^{\,n+1}\|_{L^2(\omega)}^2
+\frac12\big(\eta(\widehat\varphi_h^{\,n+1})\nabla \vec v_h^{\,n+1},
\nabla \vec v_h^{\,n+1}\big)
\\
&\quad
+\frac12\Big(
\frac{\eta(\widehat\varphi_h^{\,n+1})}{\kappa(\widehat\varphi_h^{\,n+1})}
(1-\widehat\varphi_h^{\,n+1})\vec v_h^{\,n+1},
\vec v_h^{\,n+1}\Big)
+\epsilon\big(\nu(\widehat\varphi_h^{\,n+1})\nabla \vec\zeta_h^{\,n+1},
\nabla \vec\zeta_h^{\,n+1}\big),
\end{align*}
and
\begin{align*}
\mathcal S_h^{n}
:={}&
\alpha_\phi^2\|I_H(\phi^{n}-\varphi_h^{\,n})\|_{L^2(\omega)}^2
+\alpha_\psi^2\|I_H(\vec\psi^{\,n}-\vec\zeta_h^{\,n})\|_{L^2(\omega)}^2
+\alpha_u^2\|I_H(\vec u^{\,n}-\vec v_h^{\,n})\|_{L^2(\omega)}^2
\\
&\quad
+\|f(\widehat\varphi_h^{\,n})\|_{L^2(\omega)}^2
+\|\nu'(\widehat\varphi_h^{\,n})\,|\nabla \vec\zeta_h^{\,n}|^2\|_{L^2(\omega)}^2
+\|2\tilde\pi_h^{\,n}-\tilde\pi_h^{\,n-1}\|_{L^2(\omega)}^2
\\
&\quad
+\|\xi_h^{\,n+1}\nabla\varphi_h^{\,n+1}\|_{L^2(\omega)}^2
+\|\nu'(\widehat\varphi_h^{\,n+1})|\nabla\vec\zeta_h^{\,n+1}|^2
\nabla\varphi_h^{\,n+1}\|_{L^2(\omega)}^2
\\
&\quad
+\|\nu(\widehat\varphi_h^{\,n+1})
(\nabla\vec\zeta_h^{\,n+1})^T\nabla\vec\zeta_h^{\,n+1}\|_{L^2(\omega)}^2 .
\end{align*}

Then there exist constants $C>0$ and $\Delta t_0>0$, depending only on
$\omega$, $\rho$, $\mathrm{Re}$, $\tau$, $\lambda$, $\epsilon$,
$\eta_{\min}$, $\eta_{\max}$, $\nu_{\max}$, and $\kappa_{\min}$,
such that for every $0<\Delta t\le \Delta t_0$ and every $n=0,\dots,N-1$,
\begin{equation}
\label{eq:stepwise-stab-capped}
\mathcal E_h^{n+1}
+\Delta t\,\mathcal D_h^{n+1}
\le
(1+C\Delta t)\mathcal E_h^n
+
C\Delta t\,\mathcal S_h^{n}.
\end{equation}
Consequently, if $T=N\Delta t$, then for every $m\le N$,
\begin{equation}
\label{eq:discrete-gronwall-bound-capped}
\mathcal E_h^m
+\Delta t\sum_{n=0}^{m-1}\mathcal D_h^{n+1}
\le
e^{CT}\Bigg(
\mathcal E_h^0
+
C\Delta t\sum_{n=0}^{m-1}\mathcal S_h^{n}
\Bigg).
\end{equation}
\end{theorem}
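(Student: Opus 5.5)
The plan is to derive \eqref{eq:stepwise-stab-capped} by adding three discrete energy identities, one for each substep of Problem~\ref{prob:disc-NSCH}, and then to close with a discrete Gronwall argument. First I would check the skew-symmetry identities $b_\phi(\vec a;r,r)=0$, $b_\psi(\vec a;\vec z,\vec z)=0$ and $c(\vec a;\vec b,\vec b)=0$ for $\vec a\in\vec V_h$. They follow by writing $(r\vec a,\nabla r)=\tfrac12(\vec a,\nabla r^2)$ and integrating by parts; no boundary term appears because $\vec a=\vec 0$ on $\partial\omega$. These identities were already used in the proof of Theorem~\ref{prop:disc-wp}. Throughout, I would use the polarization identity $(a-b,a)=\tfrac12(\|a\|^2-\|b\|^2+\|a-b\|^2)$ and drop the nonnegative increment terms.

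\emph{Cahn--Hilliard step.} In \eqref{eq:disc-CH-weak-CDA-final}, take $\vartheta_h=\varphi_h^{\,n+1}$ in the first equation and $\chi_h=\frac{\tau}{\lambda}\xi_h^{\,n+1}$ in the second, and add. The transport term vanishes, and the cross terms $\tau(\nabla\xi_h^{\,n+1},\nabla\varphi_h^{\,n+1})$ cancel exactly, as in the uniqueness proof. This leaves
\[
\tfrac{1}{2\Delta t}\big(\|\varphi_h^{\,n+1}\|^2-\|\varphi_h^{\,n}\|^2\big)+\tfrac{\tau}{\lambda}\|\xi_h^{\,n+1}\|^2
\le (\alpha_\phi I_H(\phi^n-\varphi_h^{\,n}),\varphi_h^{\,n+1})+\tau\gamma(f(\widehat\varphi_h^{\,n}),\xi_h^{\,n+1})+\tfrac{\tau}{2\lambda}(\nu'(\widehat\varphi_h^{\,n})|\nabla\vec\zeta_h^{\,n}|^2,\xi_h^{\,n+1}).
\]
I would apply Young's inequality to the two $\xi$-terms so that each costs at most $\frac{\tau}{4\lambda}\|\xi_h^{\,n+1}\|^2$ plus $C\|f(\widehat\varphi_h^{\,n})\|^2$ and $C\|\nu'(\widehat\varphi_h^{\,n})|\nabla\vec\zeta_h^{\,n}|^2\|^2$ respectively. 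At least $\frac{\tau}{2\lambda}\|\xi_h^{\,n+1}\|^2\ge\frac{\tau}{4\lambda}\|\xi_h^{\,n+1}\|^2$ then survives on the left. The nudging term I would bound by $\frac12\|\varphi_h^{\,n+1}\|^2+\frac12\alpha_\phi^2\|I_H(\phi^n-\varphi_h^{\,n})\|^2$.

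\emph{Auxiliary-field step.} In \eqref{eq:disc-psi-weak-CDA-final}, take $\vec\theta_h=\vec\zeta_h^{\,n+1}$. The transport term vanishes, the diffusion term gives exactly the quantity $\epsilon(\nu(\widehat\varphi_h^{\,n+1})\nabla\vec\zeta_h^{\,n+1},\nabla\vec\zeta_h^{\,n+1})$ appearing in $\mathcal D_h^{n+1}$, and this is nonnegative because $\nu\ge\nu_{\min}\ge0$ on $[0,1]$. The nudging term is handled by Young's inequality as before.

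\emph{Velocity step.} In \eqref{eq:disc-NS-weak-CDA-final}, take $\vec w_h=\vec v_h^{\,n+1}$; the convective term vanishes. The terms tested against $\nabla\vec v_h^{\,n+1}$ are the pressure extrapolation (via $\|\nabla\cdot\vec w\|\le\sqrt2\|\nabla\vec w\|$) and the $\vec\zeta$-stress. I would bound each by $\frac{\eta_{\min}}{4}\|\nabla\vec v_h^{\,n+1}\|^2$ plus $C\|\cdot\|^2$ of the corresponding data, so that half of $(\eta\nabla\vec v,\nabla\vec v)$ remains on the left. The remaining terms, namely $\xi\nabla\varphi$, the $\nu'$-term and the nudging term, I would bound by $C\|\vec v_h^{\,n+1}\|^2$ plus squared norms already listed in $\mathcal S_h^n$. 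The resistance term is nonnegative since $0\le1-\widehat\varphi_h^{\,n+1}\le1$, and I would keep half of it.

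\emph{Closing the estimate.} Summing the three inequalities and multiplying by $\Delta t$ gives
\[
\mathcal E_h^{n+1}+\Delta t\,\mathcal D_h^{n+1}\le \mathcal E_h^n+C_1\Delta t\,\mathcal E_h^{n+1}+C\Delta t\,\mathcal S_h^n .
\]
The main obstacle is precisely the terms at the new level: $\|\varphi_h^{\,n+1}\|^2$, $\|\vec\zeta_h^{\,n+1}\|^2$ and $\|\vec v_h^{\,n+1}\|^2$, produced by Young's inequality applied to the explicit right-hand sides. These cannot be absorbed by the dissipation (note that $\nu_{\min}$ may vanish), so a step-size restriction is needed. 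Choosing $\Delta t_0$ with $C_1\Delta t_0\le\frac12$, I would move these terms to the left and divide by $1-C_1\Delta t$. The bound $(1-C_1\Delta t)^{-1}\le1+2C_1\Delta t$ then yields \eqref{eq:stepwise-stab-capped}, with constants depending only on the listed parameters. Finally, iterating \eqref{eq:stepwise-stab-capped} from $n=0$ to $m-1$ and using $(1+C\Delta t)^m\le e^{CT}$ gives \eqref{eq:discrete-gronwall-bound-capped}.
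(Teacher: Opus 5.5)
Your proposal is correct and follows the paper's proof essentially step for step. It uses the same test functions in each substep, the same skew-symmetry and Young arguments, absorption of the new-level $L^2$ terms under $C\Delta t_0\le\frac12$, and the same discrete Gr\"onwall iteration. The one deviation is in the velocity step. There you charge only the pressure and $\vec\zeta$-stress terms to the viscous dissipation, and you send the $L^2$-tested terms into $C\|\vec v_h^{\,n+1}\|^2$, absorbed through the step-size restriction, rather than using Poincar\'e as the paper does. This keeps the bookkeeping consistent: the paper charges five terms at $\frac14\big(\eta(\widehat\varphi_h^{\,n+1})\nabla\vec v_h^{\,n+1},\nabla\vec v_h^{\,n+1}\big)$ each, which exceeds the available dissipation unless those fractions are reduced.
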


\begin{proof}
Throughout the proof, $C>0$ denotes a generic constant independent of $h$ and $\Delta t$.

We first estimate the Cahn--Hilliard step. Taking $\vartheta_h=\varphi_h^{\,n+1}$ in
\eqref{eq:disc-CH-weak-CDA-final}, using
\[
b_\phi(\vec v_h^{\,n};\varphi_h^{\,n+1},\varphi_h^{\,n+1})=0,
\]
and the identity
\[
2(a-b,a)=\|a\|^2-\|b\|^2+\|a-b\|^2,
\]
we obtain
\begin{align}
\frac{1}{2\Delta t}
\Big(
\|\varphi_h^{\,n+1}\|_{L^2(\omega)}^2-\|\varphi_h^{\,n}\|_{L^2(\omega)}^2
+\|\varphi_h^{\,n+1}-\varphi_h^{\,n}\|_{L^2(\omega)}^2
\Big)
+\tau(\nabla\xi_h^{\,n+1},\nabla\varphi_h^{\,n+1})
\notag\\
=
\alpha_\phi\big(I_H(\phi^n-\varphi_h^{\,n}),\varphi_h^{\,n+1}\big).
\label{eq:proof-CH-cap-1}
\end{align}
Next, choosing $\chi_h=\dfrac{\tau}{\lambda}\xi_h^{\,n+1}$ in the constitutive relation gives
\begin{align}
\frac{\tau}{\lambda}\|\xi_h^{\,n+1}\|_{L^2(\omega)}^2
-\tau(\nabla\varphi_h^{\,n+1},\nabla\xi_h^{\,n+1})
&=
\tau\gamma\big(f(\widehat\varphi_h^{\,n}),\xi_h^{\,n+1}\big)
\notag\\
&\quad
+\frac{\tau}{2\lambda}\big(\nu'(\widehat\varphi_h^{\,n})|\nabla\vec\zeta_h^{\,n}|^2,\xi_h^{\,n+1}\big).
\label{eq:proof-CH-cap-2}
\end{align}
Adding \eqref{eq:proof-CH-cap-1} and \eqref{eq:proof-CH-cap-2} yields
\begin{align}
\frac{1}{2\Delta t}
\Big(
\|\varphi_h^{\,n+1}\|_{L^2(\omega)}^2-\|\varphi_h^{\,n}\|_{L^2(\omega)}^2
+\|\varphi_h^{\,n+1}-\varphi_h^{\,n}\|_{L^2(\omega)}^2
\Big)
+\frac{\tau}{\lambda}\|\xi_h^{\,n+1}\|_{L^2(\omega)}^2
\notag\\
=
\alpha_\phi\big(I_H(\phi^n-\varphi_h^{\,n}),\varphi_h^{\,n+1}\big)
+\tau\gamma\big(f(\widehat\varphi_h^{\,n}),\xi_h^{\,n+1}\big)
+\frac{\tau}{2\lambda}\big(\nu'(\widehat\varphi_h^{\,n})|\nabla\vec\zeta_h^{\,n}|^2,\xi_h^{\,n+1}\big).
\label{eq:proof-CH-cap-3}
\end{align}
By Young's inequality,
\begin{align}
\alpha_\phi\big(I_H(\phi^n-\varphi_h^{\,n}),\varphi_h^{\,n+1}\big)
&\le
\frac12\|\varphi_h^{\,n+1}\|_{L^2(\omega)}^2
+
C\alpha_\phi^2\|I_H(\phi^n-\varphi_h^{\,n})\|_{L^2(\omega)}^2,
\label{eq:proof-CH-cap-4}
\\
\tau\gamma\big(f(\widehat\varphi_h^{\,n}),\xi_h^{\,n+1}\big)
&\le
\frac{\tau}{4\lambda}\|\xi_h^{\,n+1}\|_{L^2(\omega)}^2
+
C\|f(\widehat\varphi_h^{\,n})\|_{L^2(\omega)}^2,
\label{eq:proof-CH-cap-5}
\\
\frac{\tau}{2\lambda}\big(\nu'(\widehat\varphi_h^{\,n})|\nabla\vec\zeta_h^{\,n}|^2,\xi_h^{\,n+1}\big)
&\le
\frac{\tau}{4\lambda}\|\xi_h^{\,n+1}\|_{L^2(\omega)}^2
+
C\|\nu'(\widehat\varphi_h^{\,n})|\nabla\vec\zeta_h^{\,n}|^2\|_{L^2(\omega)}^2.
\label{eq:proof-CH-cap-6}
\end{align}
Substituting \eqref{eq:proof-CH-cap-4}--\eqref{eq:proof-CH-cap-6} into
\eqref{eq:proof-CH-cap-3}, we obtain
\begin{align}
\frac{1}{2\Delta t}
\Big(
\|\varphi_h^{\,n+1}\|_{L^2(\omega)}^2-\|\varphi_h^{\,n}\|_{L^2(\omega)}^2
+\|\varphi_h^{\,n+1}-\varphi_h^{\,n}\|_{L^2(\omega)}^2
\Big)
+\frac{\tau}{2\lambda}\|\xi_h^{\,n+1}\|_{L^2(\omega)}^2
\notag\\
\le
\frac12\|\varphi_h^{\,n+1}\|_{L^2(\omega)}^2
+
C\Big(
\alpha_\phi^2\|I_H(\phi^n-\varphi_h^{\,n})\|_{L^2(\omega)}^2
+\|f(\widehat\varphi_h^{\,n})\|_{L^2(\omega)}^2
+\|\nu'(\widehat\varphi_h^{\,n})|\nabla\vec\zeta_h^{\,n}|^2\|_{L^2(\omega)}^2
\Big).
\label{eq:proof-CH-cap-final}
\end{align}

We now estimate the auxiliary-field step. Taking $\vec\theta_h=\vec\zeta_h^{\,n+1}$ in
\eqref{eq:disc-psi-weak-CDA-final} and using
\[
b_\psi(\vec v_h^{\,n};\vec\zeta_h^{\,n+1},\vec\zeta_h^{\,n+1})=0,
\]
we obtain
\begin{align}
\frac{1}{2\Delta t}
\Big(
\|\vec\zeta_h^{\,n+1}\|_{L^2(\omega)}^2-\|\vec\zeta_h^{\,n}\|_{L^2(\omega)}^2
+\|\vec\zeta_h^{\,n+1}-\vec\zeta_h^{\,n}\|_{L^2(\omega)}^2
\Big)
+\epsilon\big(\nu(\widehat\varphi_h^{\,n+1})\nabla\vec\zeta_h^{\,n+1},\nabla\vec\zeta_h^{\,n+1}\big)
\notag\\
=
\alpha_\psi\big(I_H(\vec\psi^{\,n}-\vec\zeta_h^{\,n}),\vec\zeta_h^{\,n+1}\big).
\label{eq:proof-psi-cap-1}
\end{align}
By Young's inequality,
\begin{equation}
\alpha_\psi\big(I_H(\vec\psi^{\,n}-\vec\zeta_h^{\,n}),\vec\zeta_h^{\,n+1}\big)
\le
\frac12\|\vec\zeta_h^{\,n+1}\|_{L^2(\omega)}^2
+
C\alpha_\psi^2\|I_H(\vec\psi^{\,n}-\vec\zeta_h^{\,n})\|_{L^2(\omega)}^2.
\label{eq:proof-psi-cap-2}
\end{equation}
Hence
\begin{align}
\frac{1}{2\Delta t}
\Big(
\|\vec\zeta_h^{\,n+1}\|_{L^2(\omega)}^2-\|\vec\zeta_h^{\,n}\|_{L^2(\omega)}^2
+\|\vec\zeta_h^{\,n+1}-\vec\zeta_h^{\,n}\|_{L^2(\omega)}^2
\Big)
+\epsilon\big(\nu(\widehat\varphi_h^{\,n+1})\nabla\vec\zeta_h^{\,n+1},\nabla\vec\zeta_h^{\,n+1}\big)
\notag\\
\le
\frac12\|\vec\zeta_h^{\,n+1}\|_{L^2(\omega)}^2
+
C\alpha_\psi^2\|I_H(\vec\psi^{\,n}-\vec\zeta_h^{\,n})\|_{L^2(\omega)}^2.
\label{eq:proof-psi-cap-final}
\end{align}

Next we estimate the velocity step. Taking $\vec w_h=\vec v_h^{\,n+1}$ in
\eqref{eq:disc-NS-weak-CDA-final}, using
\[
c(\vec v_h^{\,n};\vec v_h^{\,n+1},\vec v_h^{\,n+1})=0,
\]
and again the identity $2(a-b,a)=\|a\|^2-\|b\|^2+\|a-b\|^2$, we obtain
\begin{align}
\frac{\rho\,\mathrm{Re}}{2\Delta t}
\Big(
\|\vec v_h^{\,n+1}\|_{L^2(\omega)}^2-\|\vec v_h^{\,n}\|_{L^2(\omega)}^2
+\|\vec v_h^{\,n+1}-\vec v_h^{\,n}\|_{L^2(\omega)}^2
\Big)
\notag\\
+\big(\eta(\widehat\varphi_h^{\,n+1})\nabla\vec v_h^{\,n+1},\nabla\vec v_h^{\,n+1}\big)
+\Big(
\frac{\eta(\widehat\varphi_h^{\,n+1})}{\kappa(\widehat\varphi_h^{\,n+1})}
(1-\widehat\varphi_h^{\,n+1})\vec v_h^{\,n+1},\vec v_h^{\,n+1}\Big)
\notag\\
=
\big(2\tilde\pi_h^{\,n}-\tilde\pi_h^{\,n-1},\nabla\!\cdot \vec v_h^{\,n+1}\big)
+\big(\xi_h^{\,n+1}\nabla\varphi_h^{\,n+1},\vec v_h^{\,n+1}\big)
\notag\\
\quad
-\frac12\big(\nu'(\widehat\varphi_h^{\,n+1})|\nabla\vec\zeta_h^{\,n+1}|^2\nabla\varphi_h^{\,n+1},\vec v_h^{\,n+1}\big)
\notag\\
\quad
-\big(\nu(\widehat\varphi_h^{\,n+1})(\nabla\vec\zeta_h^{\,n+1})^T\nabla\vec\zeta_h^{\,n+1},
\nabla \vec v_h^{\,n+1}\big)
+\alpha_u\big(I_H(\vec u^{\,n}-\vec v_h^{\,n}),\vec v_h^{\,n+1}\big).
\label{eq:proof-v-cap-1}
\end{align}

Since $\vec v_h^{\,n+1}\in \vec V_h\subset H_0^1(\omega)^2$, the Poincar\'e inequality gives
\[
\|\vec v_h^{\,n+1}\|_{L^2(\omega)}
\le C_P\|\nabla\vec v_h^{\,n+1}\|_{L^2(\omega)}.
\]
Also,
\[
\eta(\widehat\varphi_h^{\,n+1})\ge \eta_{\min}>0,
\qquad
0\le 1-\widehat\varphi_h^{\,n+1}\le 1.
\]
Therefore, by Cauchy--Schwarz, Poincar\'e, and Young's inequality,
\begin{align}
\big(2\tilde\pi_h^{\,n}-\tilde\pi_h^{\,n-1},\nabla\!\cdot \vec v_h^{\,n+1}\big)
&\le
\frac14\big(\eta(\widehat\varphi_h^{\,n+1})\nabla\vec v_h^{\,n+1},
\nabla\vec v_h^{\,n+1}\big)
+
C\|2\tilde\pi_h^{\,n}-\tilde\pi_h^{\,n-1}\|_{L^2(\omega)}^2,
\label{eq:proof-v-cap-2}
\\
\big(\xi_h^{\,n+1}\nabla\varphi_h^{\,n+1},\vec v_h^{\,n+1}\big)
&\le
\frac14\big(\eta(\widehat\varphi_h^{\,n+1})\nabla\vec v_h^{\,n+1},
\nabla\vec v_h^{\,n+1}\big)
+
C\|\xi_h^{\,n+1}\nabla\varphi_h^{\,n+1}\|_{L^2(\omega)}^2,
\label{eq:proof-v-cap-3}
\\
\frac12\big|\big(\nu'(\widehat\varphi_h^{\,n+1})|\nabla\vec\zeta_h^{\,n+1}|^2
\nabla\varphi_h^{\,n+1},\vec v_h^{\,n+1}\big)\big|
&\le
\frac14\big(\eta(\widehat\varphi_h^{\,n+1})\nabla\vec v_h^{\,n+1},
\nabla\vec v_h^{\,n+1}\big)
\notag\\
&\quad
+
C\|\nu'(\widehat\varphi_h^{\,n+1})|\nabla\vec\zeta_h^{\,n+1}|^2
\nabla\varphi_h^{\,n+1}\|_{L^2(\omega)}^2,
\label{eq:proof-v-cap-4}
\\
\big|\big(\nu(\widehat\varphi_h^{\,n+1})(\nabla\vec\zeta_h^{\,n+1})^T\nabla\vec\zeta_h^{\,n+1},
\nabla \vec v_h^{\,n+1}\big)\big|
&\le
\frac14\big(\eta(\widehat\varphi_h^{\,n+1})\nabla\vec v_h^{\,n+1},
\nabla\vec v_h^{\,n+1}\big)
\notag\\
&\quad
+
C\|\nu(\widehat\varphi_h^{\,n+1})(\nabla\vec\zeta_h^{\,n+1})^T
\nabla\vec\zeta_h^{\,n+1}\|_{L^2(\omega)}^2,
\label{eq:proof-v-cap-5}
\\
\alpha_u\big(I_H(\vec u^{\,n}-\vec v_h^{\,n}),\vec v_h^{\,n+1}\big)
&\le
\frac14\big(\eta(\widehat\varphi_h^{\,n+1})\nabla\vec v_h^{\,n+1},
\nabla\vec v_h^{\,n+1}\big)
+
C\alpha_u^2\|I_H(\vec u^{\,n}-\vec v_h^{\,n})\|_{L^2(\omega)}^2.
\label{eq:proof-v-cap-6}
\end{align}
Substituting \eqref{eq:proof-v-cap-2}--\eqref{eq:proof-v-cap-6} into
\eqref{eq:proof-v-cap-1}, we obtain
\begin{align}
\frac{\rho\,\mathrm{Re}}{2\Delta t}
\Big(
\|\vec v_h^{\,n+1}\|_{L^2(\omega)}^2-\|\vec v_h^{\,n}\|_{L^2(\omega)}^2
+\|\vec v_h^{\,n+1}-\vec v_h^{\,n}\|_{L^2(\omega)}^2
\Big)
\notag\\
+\frac12\big(\eta(\widehat\varphi_h^{\,n+1})\nabla\vec v_h^{\,n+1},
\nabla\vec v_h^{\,n+1}\big)
+\Big(
\frac{\eta(\widehat\varphi_h^{\,n+1})}{\kappa(\widehat\varphi_h^{\,n+1})}
(1-\widehat\varphi_h^{\,n+1})\vec v_h^{\,n+1},\vec v_h^{\,n+1}\Big)
\notag\\
\le
C\Big(
\|2\tilde\pi_h^{\,n}-\tilde\pi_h^{\,n-1}\|_{L^2(\omega)}^2
+\|\xi_h^{\,n+1}\nabla\varphi_h^{\,n+1}\|_{L^2(\omega)}^2
\notag\\
\qquad
+\|\nu'(\widehat\varphi_h^{\,n+1})|\nabla\vec\zeta_h^{\,n+1}|^2
\nabla\varphi_h^{\,n+1}\|_{L^2(\omega)}^2
\notag\\
\qquad
+\|\nu(\widehat\varphi_h^{\,n+1})(\nabla\vec\zeta_h^{\,n+1})^T
\nabla\vec\zeta_h^{\,n+1}\|_{L^2(\omega)}^2
+\alpha_u^2\|I_H(\vec u^{\,n}-\vec v_h^{\,n})\|_{L^2(\omega)}^2
\Big).
\label{eq:proof-v-cap-final}
\end{align}

We now add \eqref{eq:proof-CH-cap-final}, \eqref{eq:proof-psi-cap-final}, and
\eqref{eq:proof-v-cap-final}, and multiply by $\Delta t$. Recalling the definitions of
$\mathcal E_h^n$, $\mathcal D_h^{n+1}$, and $\mathcal S_h^n$, we infer
\begin{equation}
\label{eq:proof-preabsorb-cap}
\mathcal E_h^{n+1}-\mathcal E_h^n
+\Delta t\,\mathcal D_h^{n+1}
\le
C\Delta t\,\mathcal E_h^{n+1}
+
C\Delta t\,\mathcal S_h^n.
\end{equation}
Indeed, the terms
\[
\frac{\Delta t}{2}\|\varphi_h^{\,n+1}\|_{L^2(\omega)}^2
\qquad\text{and}\qquad
\frac{\Delta t}{2}\|\vec\zeta_h^{\,n+1}\|_{L^2(\omega)}^2
\]
arising from \eqref{eq:proof-CH-cap-final} and \eqref{eq:proof-psi-cap-final}
are bounded by $C\Delta t\,\mathcal E_h^{n+1}$.

Choose $\Delta t_0>0$ so that $C\Delta t_0\le \frac12$. Then for every
$0<\Delta t\le \Delta t_0$, the term $C\Delta t\,\mathcal E_h^{n+1}$ can be absorbed
into the left-hand side of \eqref{eq:proof-preabsorb-cap}, yielding
\[
\mathcal E_h^{n+1}
+\Delta t\,\mathcal D_h^{n+1}
\le
(1+C\Delta t)\mathcal E_h^n
+
C\Delta t\,\mathcal S_h^n,
\]
which is exactly \eqref{eq:stepwise-stab-capped}. Iterating this estimate and applying
the discrete Gr\"onwall lemma proves \eqref{eq:discrete-gronwall-bound-capped}.
\end{proof}

\begin{remark}
Theorem~\ref{thm:stepwise-stability-capped} provides a stepwise stability bound
for the capped fully discrete scheme. In particular, it gives a useful a priori
control on the propagation of the discrete solution and supports the numerical
implementation employed in the experiments below.
\end{remark}

\begin{remark}
Related convergence and error estimates for diffuse-interface and
Cahn--Hilliard fluid schemes can be found in
\cite{Feng2006NSCH,FengHeLiu2007,DiegelWangWangWise2017,
LiuChenWangWise2017,ChenEtAl2022CHSD,ChenMaoShen2020,
ChenEtAl2024FHCHNS}.
These works place Theorem~\ref{thm:stepwise-stability-capped} in the broader
context of fully discrete phase-field approximations. For the capped scheme
considered here, the truncation$\widehat\varphi_h^{\,n}=\mathcal T(\varphi_h^{\,n})$
adds an extra layer to the analysis.
\end{remark}

%===========================================================
\section{Numerical experiments for the NSCH system with continuous data assimilation}
\label{sec:numerics}

We present a set of two-dimensional numerical experiments illustrating the behavior of the coupled NSCH--$\psi$
system and the effectiveness of the continuous data assimilation (CDA) nudging terms. All simulations are performed
with \texttt{FreeFEM++} using the splitting scheme described in Problem~\ref{prob:disc-NSCH}.

%-----------------------------------------------------------

We take $\omega=(0,1)\times(0,1)$. Let $\mathcal T_H$ denote the fine triangulation used for the forward solves, and
let $\mathcal T_h$ be a uniformly coarsened mesh used to represent the observational resolution. In the implementation,
the observation operator $I_H$ is realized by nodal interpolation onto $\mathcal T_h$ and then evaluating the coarse-grid field on $\mathcal T_H$.

We use continuous $P_2$ elements on $\mathcal T_H$ for $(\phi,\mu)$ and for each component of $\vec\psi$ and $\vec u$,
and continuous $P_1$ elements for the pressure variable.

Let us choose $\nu(\phi)=\lambda_e(1-\phi)$. We report the following components:
\[
E_{\rm kin}(t)=\frac{\rho}{2}\int_\omega |\vec u|^2\,dx,\qquad
E_{\rm mix}(t)=\int_\omega\Big(\frac{\lambda}{2}|\nabla\phi|^2+4\lambda\gamma\,\phi^2(1-\phi)^2\Big)\,dx,
\]
and an elastic contribution for $\vec\psi$ weighted by $1-\phi$,
\[
E_{\rm el}(t)=\frac{\lambda_e}{2}\int_\omega (1-\phi)\,|\nabla\vec\psi|^2\,dx,
\qquad
E_{\rm tot}=E_{\rm kin}+E_{\rm mix}+E_{\rm el}
\].

We track the $L^2$-errors between the reference state and the CDA state:
\[
e_u(t)=\|\vec u(t)-\vec v(t)\|_{L^2(\omega)},\quad
e_\phi(t)=\|\phi(t)-\varphi(t)\|_{L^2(\omega)},\quad
e_\psi(t)=\|\vec\psi(t)-\vec\zeta(t)\|_{L^2(\omega)}.
\]
For visualization, we plot the errors on a logarithmic scale versus time. We also plot an $L^2$-difference for the
pressure iterate, but we emphasize that pressure is only defined up to an additive constant and is obtained here
via a pressure-correction update. Thus, its error can exhibit persistent oscillations even when the primary variables
synchronize.

%-----------------------------------------------------------
\paragraph{Test 1: performance demonstration}
\label{subsec:da-test}

We consider the square domain $\omega=(0,1)^2$ on a $64\times64$ on the fine mesh $\mathcal T_H$ and $32\times32$ on the coarse mesh $\mathcal T_h$. We solve the coupled system with the no-slip condition
$\vec u=\vec 0$ on $\partial\omega$, homogeneous Neumann conditions for $\phi$, $\mu$, and $\vec\psi$, and no external forcing.
For this test, set
$\Delta t=10^{-2}$, $T=100$, $\mathrm{Re}=3000$, $\tau=10^{-4}$, $\epsilon=5\times 10^{-3}$,
$\rho=1$, $\lambda=1$, $\gamma=1$, $\lambda_e=0.5$, $k_{\mathrm{per}}=1$,
and $\eta(\phi)=\eta_p\phi+\eta_f(1-\phi)$ with $(\eta_f,\eta_p)=(10^{-2},10^{-1})$.
The observation operator $I_H$ is implemented by nodal interpolation of fine-grid fields to a uniformly coarsened mesh
$\mathcal T_H$.

The reference initial conditions are
\[
\phi_0(x,y)=\frac12+\frac12\tanh\!\Big(\frac{R_0-\sqrt{(x-x_0)^2+(y-y_0)^2}}{\sqrt{2\gamma}}\Big),
\qquad (x_0,y_0)=(0.5,0.5),\quad R_0=0.18,
\]
together with
\[
\vec\psi_0(x,y)=(-y,\ x),\qquad \vec u_0(x,y)\equiv (0,0),\qquad p_0(x,y)\equiv 0.
\]
For the assimilated run, we intentionally start from mismatched initial data:
\[
\varphi_0(x,y)=\frac12+\frac12\tanh\!\Big(\frac{R_0^{\mathrm{da}}-\sqrt{(x-x_0^{\mathrm{da}})^2+(y-y_0^{\mathrm{da}})^2}}{\sqrt{2\gamma}}\Big),
\qquad (x_0^{\mathrm{da}},y_0^{\mathrm{da}})=(0.65,0.35),\quad R_0^{\mathrm{da}}=0.22,
\]
\[
\vec\zeta_0(x,y)=\bigl(-(\cos\theta\,y+\sin\theta\,x),\ \cos\theta\,x-\sin\theta\,y\bigr),
\qquad \theta=0.6,
\]
\[
\vec v_0(x,y)=\bigl(0.20\sin(\pi y),\ 0\bigr),\qquad \pi_0(x,y)\equiv 0.
\]

We first report the energy evolution of the reference solution. In this parameter regime, the total energy decreases, and the solution relaxes toward a steady configuration.

\begin{figure}[H]
\centering
\includegraphics[width=0.48\textwidth]{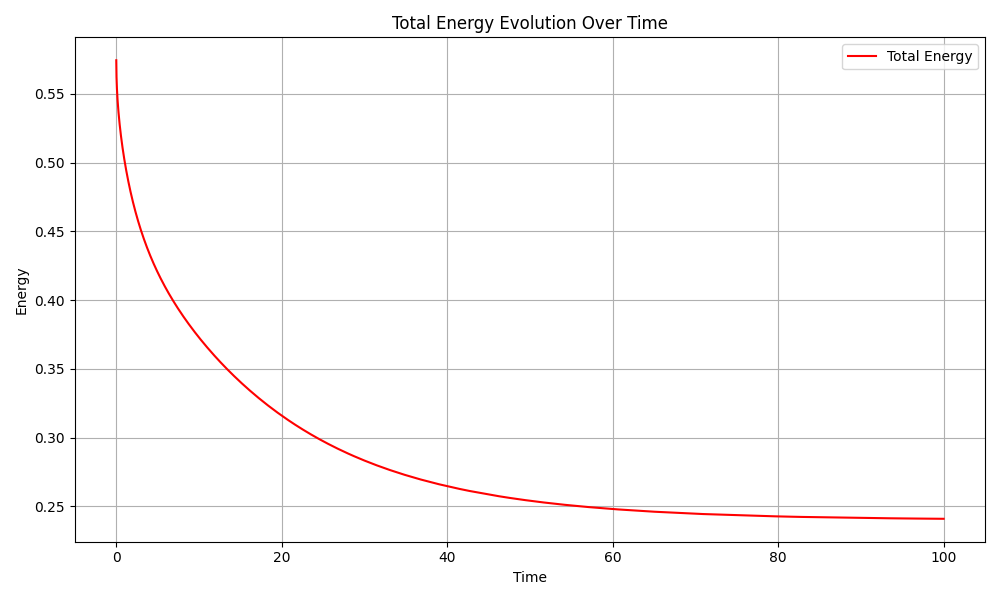}\hfill
\includegraphics[width=0.48\textwidth]{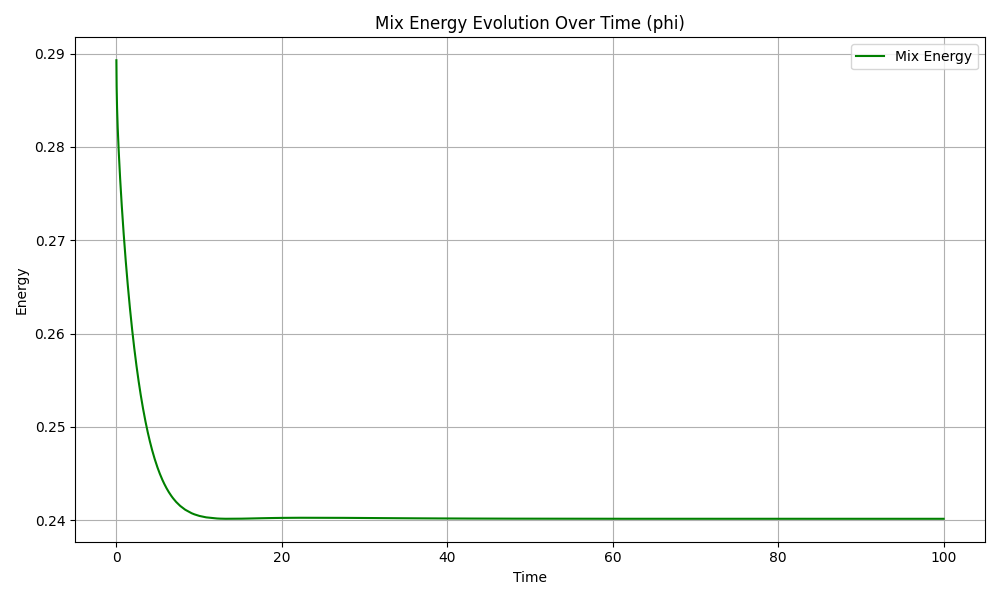}\\[0.5ex]
\includegraphics[width=0.48\textwidth]{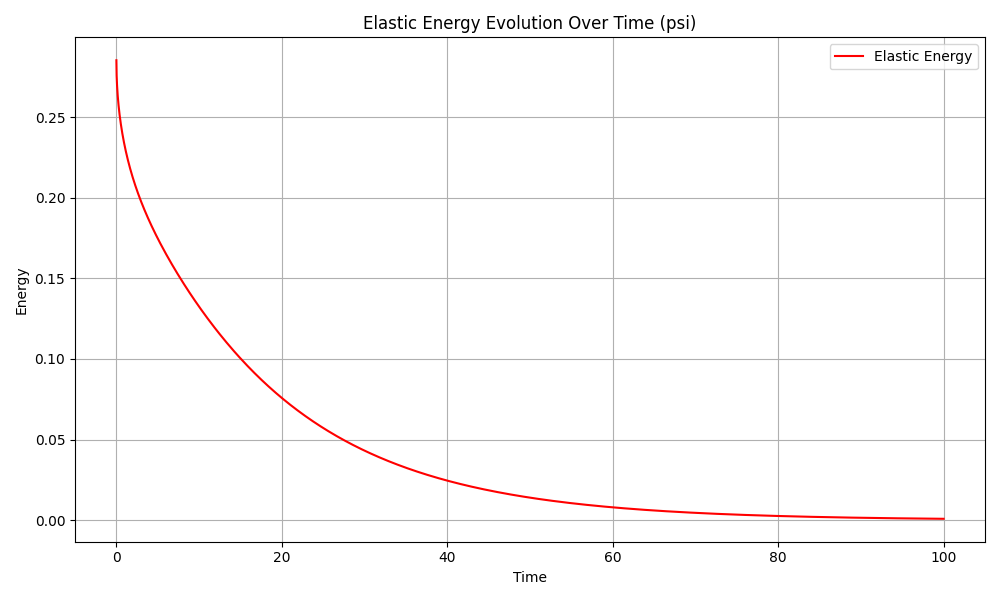}\hfill
\includegraphics[width=0.48\textwidth]{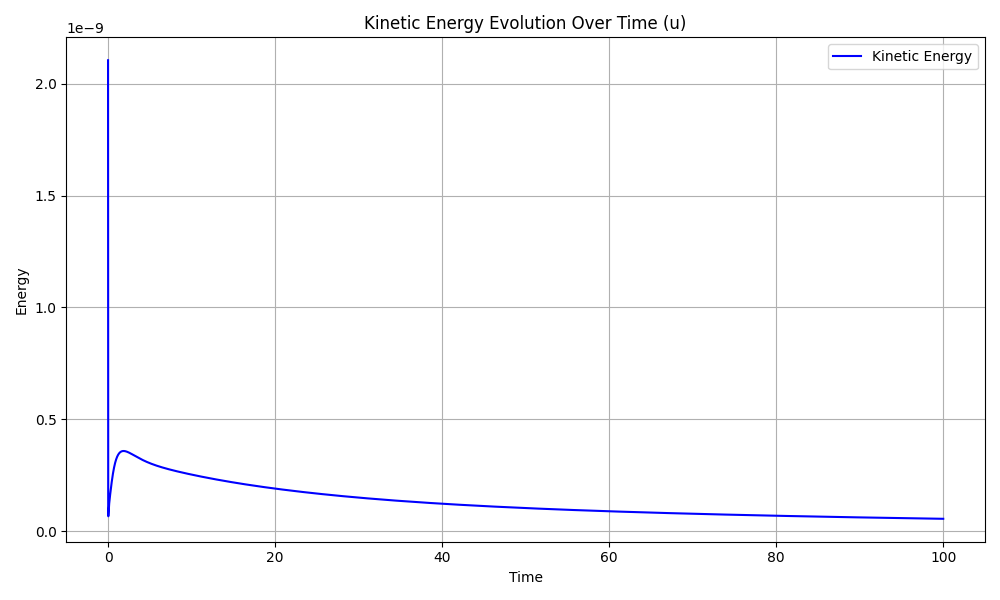}
\caption{Reference solution: total energy and its components.}
\label{fig:da-energy}
\end{figure}

We next show the synchronization errors for the CDA run. The $L^2$-errors
$e_u(t)=\|\vec u-\vec v\|_{L^2(\omega)}$, $e_\phi(t)=\|\phi-\varphi\|_{L^2(\omega)}$, and
$e_\psi(t)=\|\vec\psi-\vec\zeta\|_{L^2(\omega)}$ decay rapidly and then plateau at a small discretization-limited level.

\begin{figure}[H]
\centering
\includegraphics[width=0.32\textwidth]{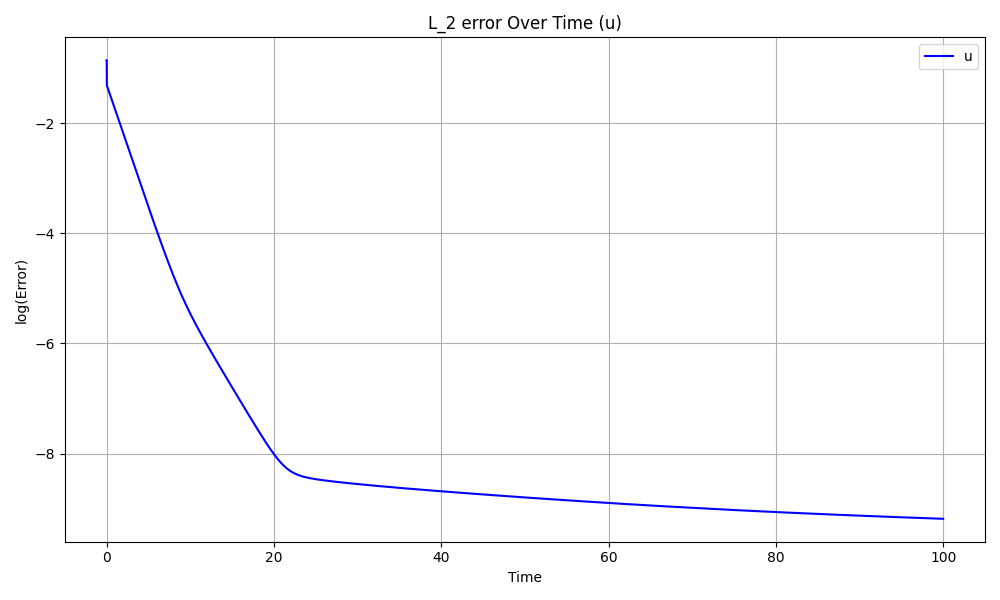}\hfill
\includegraphics[width=0.32\textwidth]{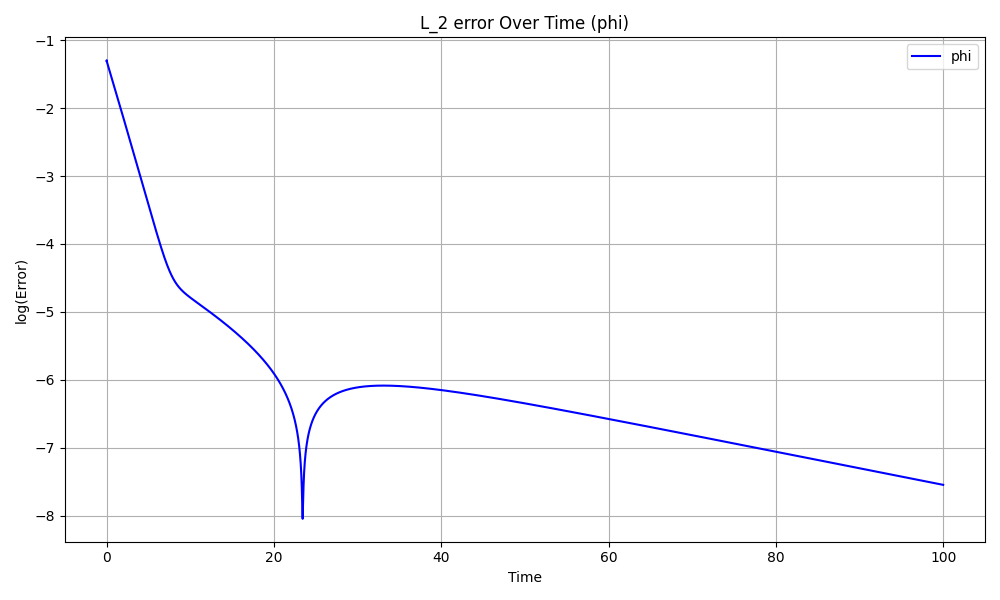}\hfill
\includegraphics[width=0.32\textwidth]{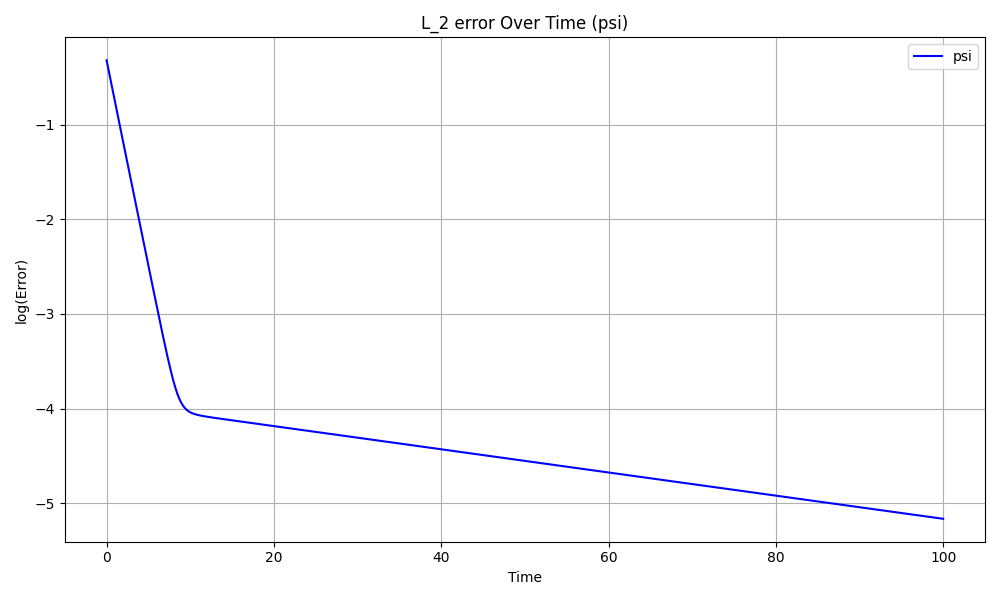}
\caption{CDA run (Test~1, $\alpha_u=\alpha_\phi=\alpha_\psi=1$): logarithmic $L^2$-errors in $\vec u$, $\phi$, and $\vec\psi$.}
\label{fig:da-error}
\end{figure}

Then, we set the nudging parameter to zero to turn off the data assimilation. The mismatch persists, and the errors do not decay to zero.

\begin{figure}[H]
\centering
\includegraphics[width=0.32\textwidth]{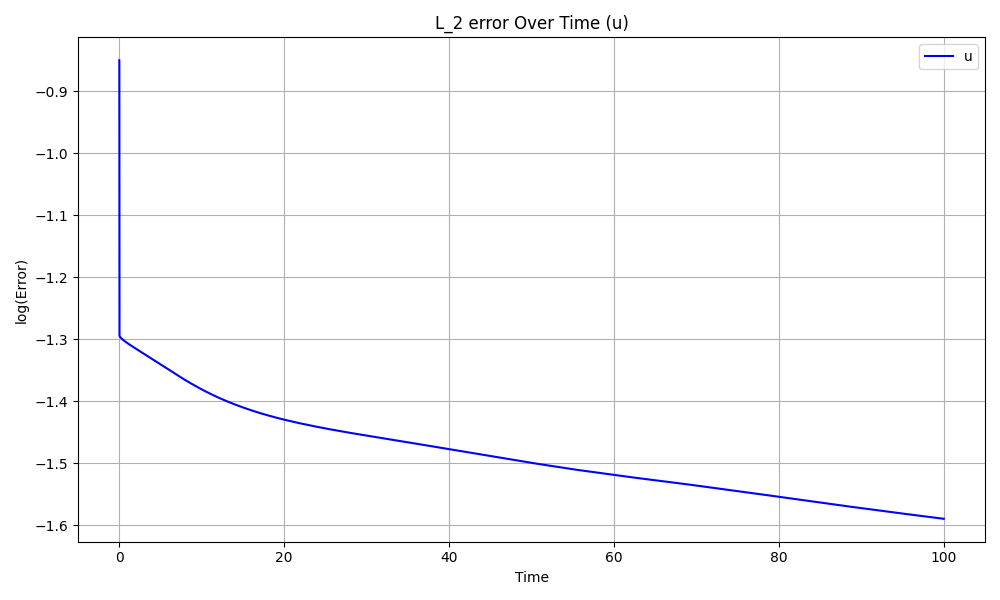}\hfill
\includegraphics[width=0.32\textwidth]{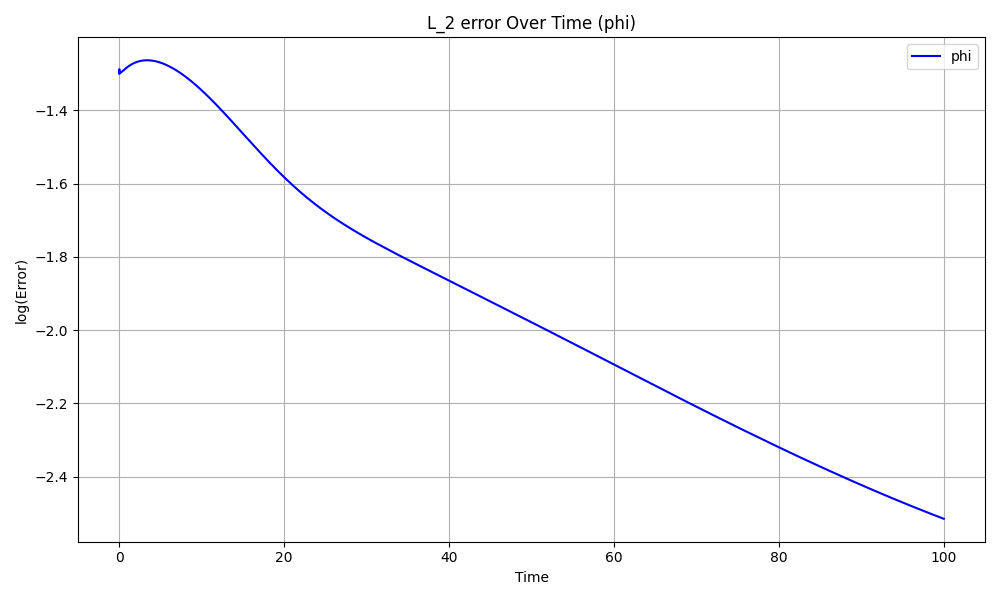}\hfill
\includegraphics[width=0.32\textwidth]{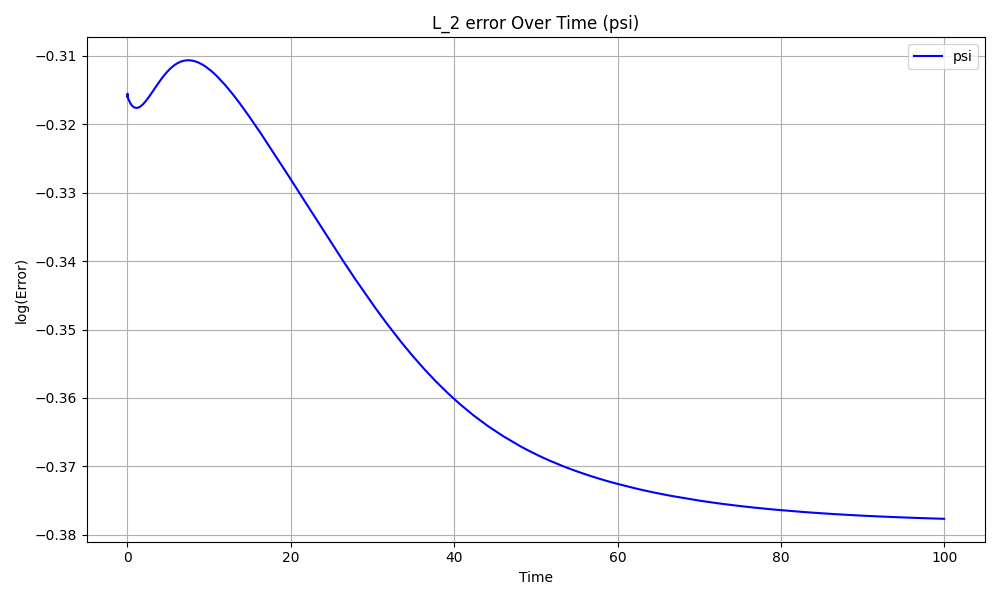}
\caption{No-nudging run (Test~1, $\alpha_u=\alpha_\phi=\alpha_\psi=0$): logarithmic $L^2$-errors in $\vec u$, $\phi$, and $\vec\psi$.}
\label{fig:noda-error}
\end{figure}

We finally compare the $\varphi$  snapshots. With CDA nudging the assimilated state $\varphi$ recovers the correct
interface geometry and location. The solution relaxes to a different steady state determined by its own
initial condition without nudging.

\begin{figure}[H]
\centering
\setlength{\tabcolsep}{1pt}
\renewcommand{\arraystretch}{0}
\scriptsize
\begin{tabular}{@{}c*{7}{c}@{}}

\raisebox{0.4\height}{\makebox[0pt][c]{\rotatebox{90}{\textbf{Reference}}}} &
\includegraphics[width=0.155\textwidth]{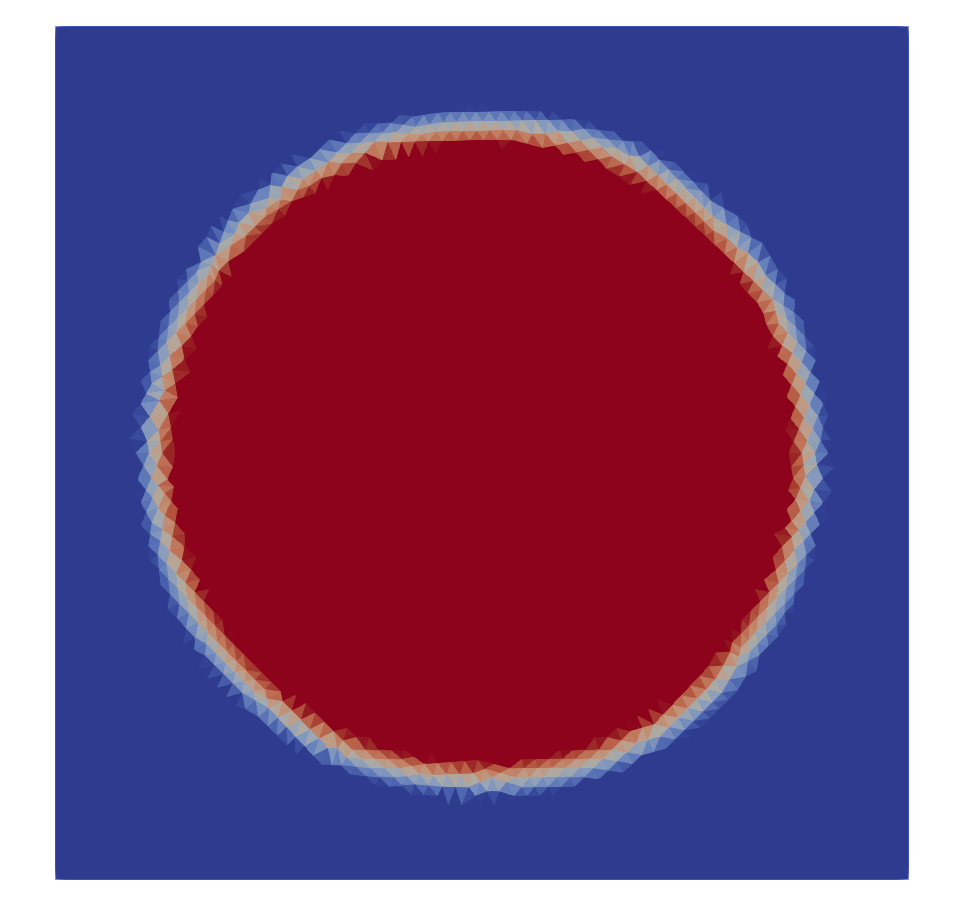} &
\includegraphics[width=0.155\textwidth]{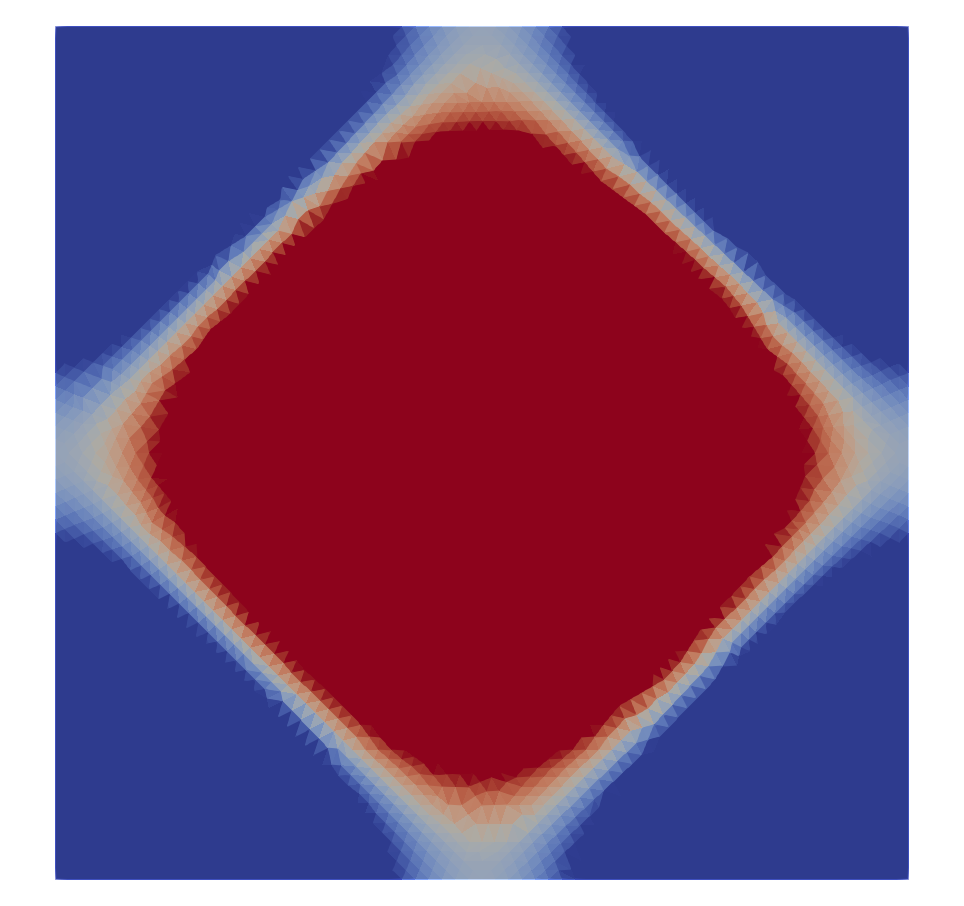} &
\includegraphics[width=0.155\textwidth]{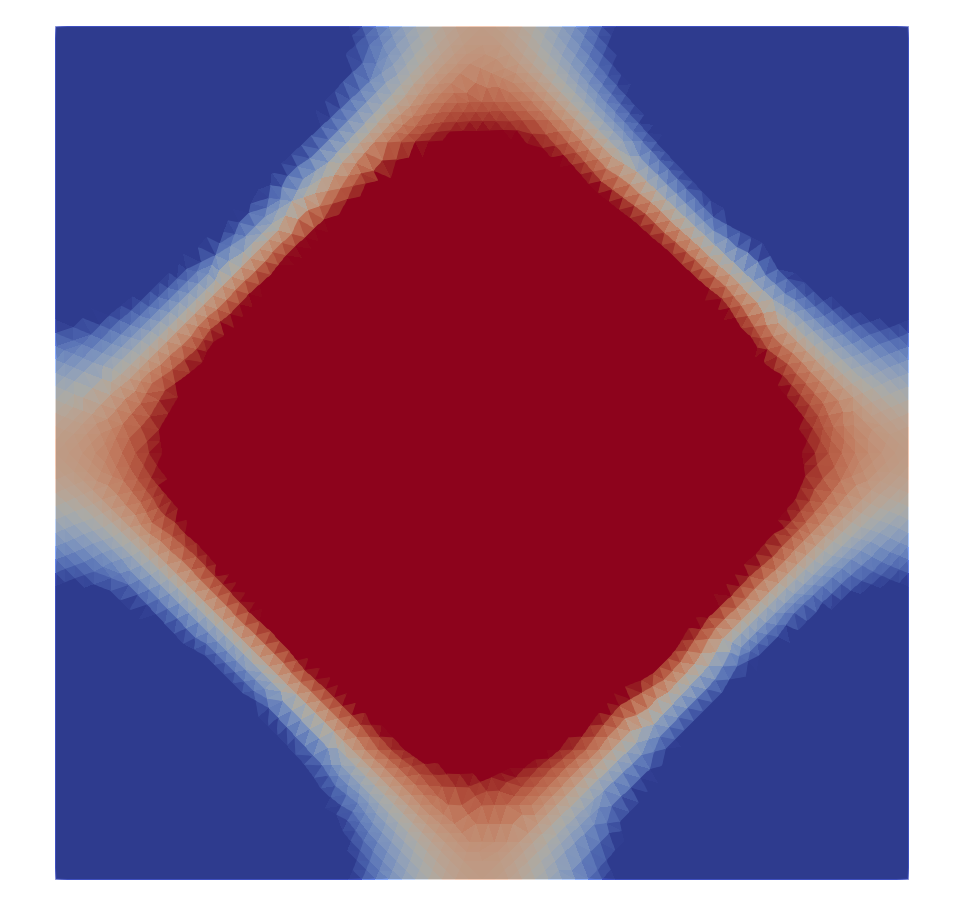} &
\includegraphics[width=0.155\textwidth]{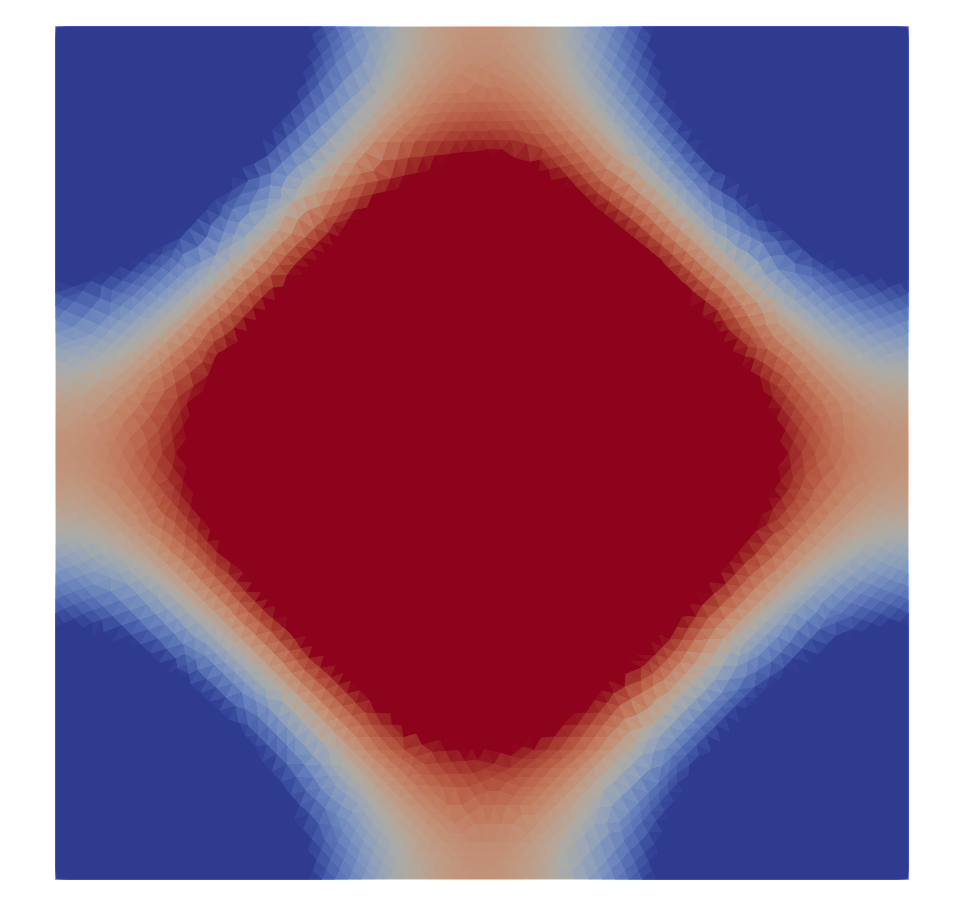} &
\includegraphics[width=0.155\textwidth]{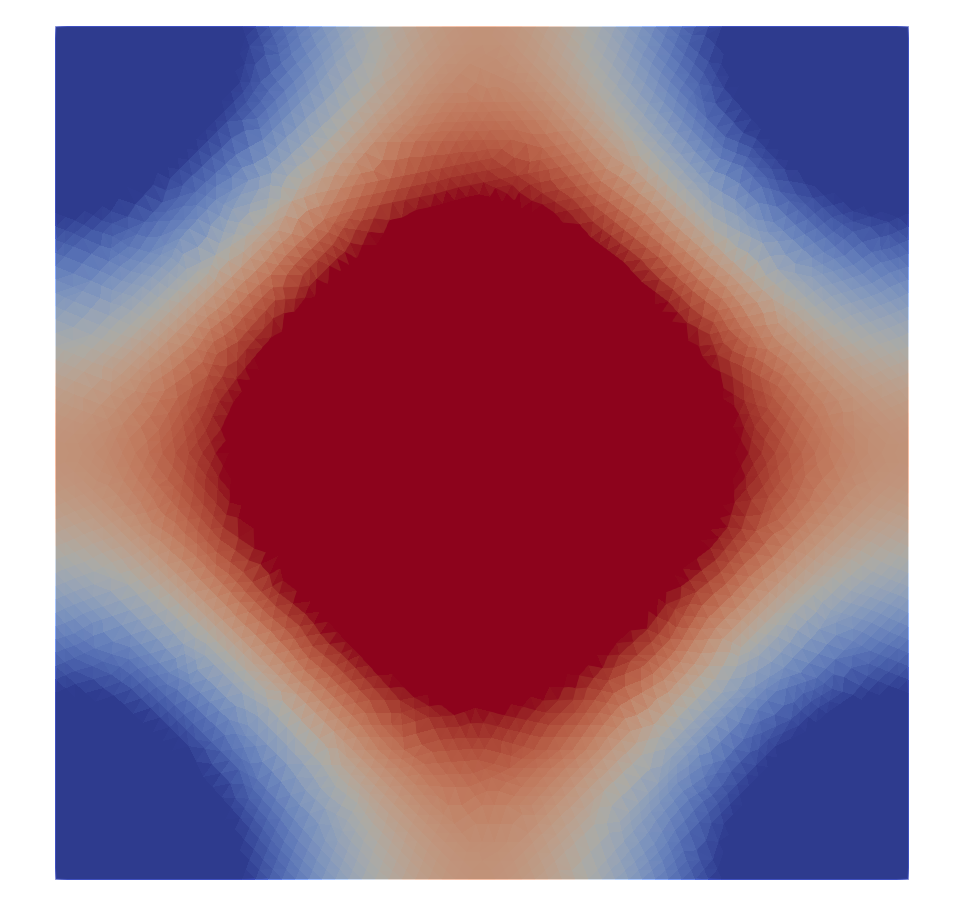} &
\includegraphics[width=0.155\textwidth]{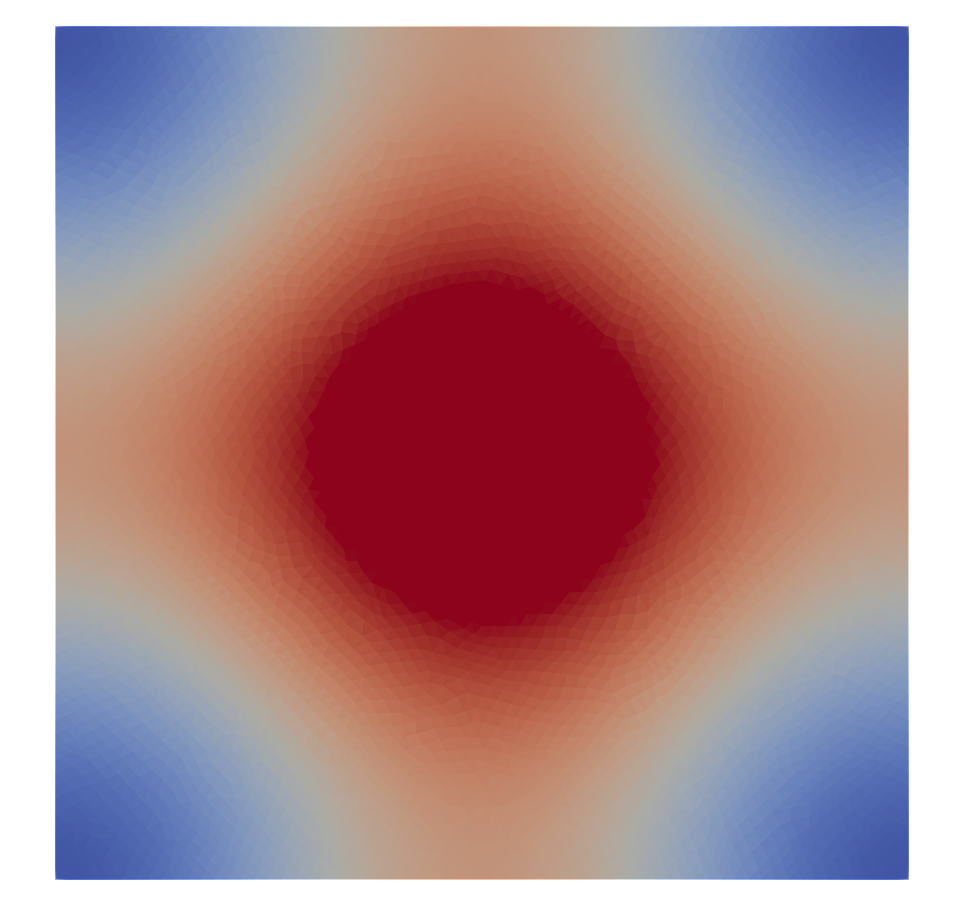} 
& \includegraphics[width=0.038\textwidth]{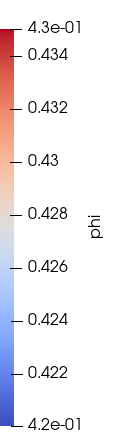}\\[2mm]

\raisebox{1.2\height}{\makebox[0pt][c]{\rotatebox{90}{\textbf{CDA}}}} &
\includegraphics[width=0.155\textwidth]{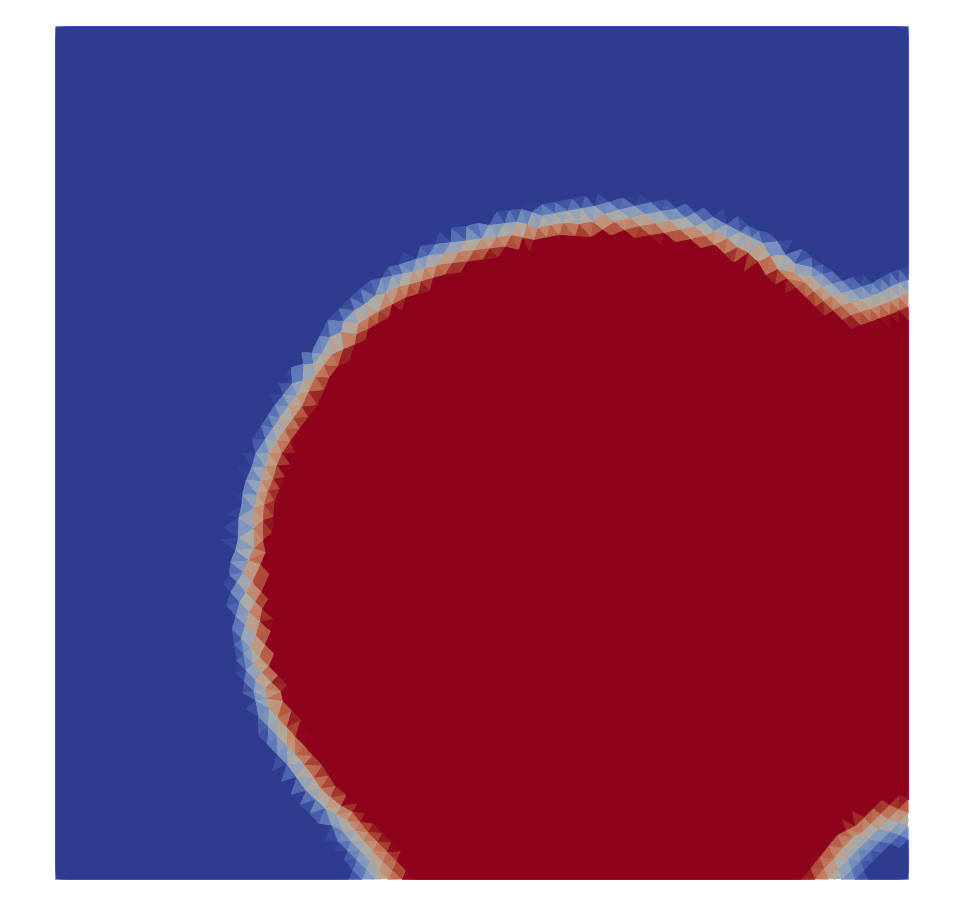} &
\includegraphics[width=0.155\textwidth]{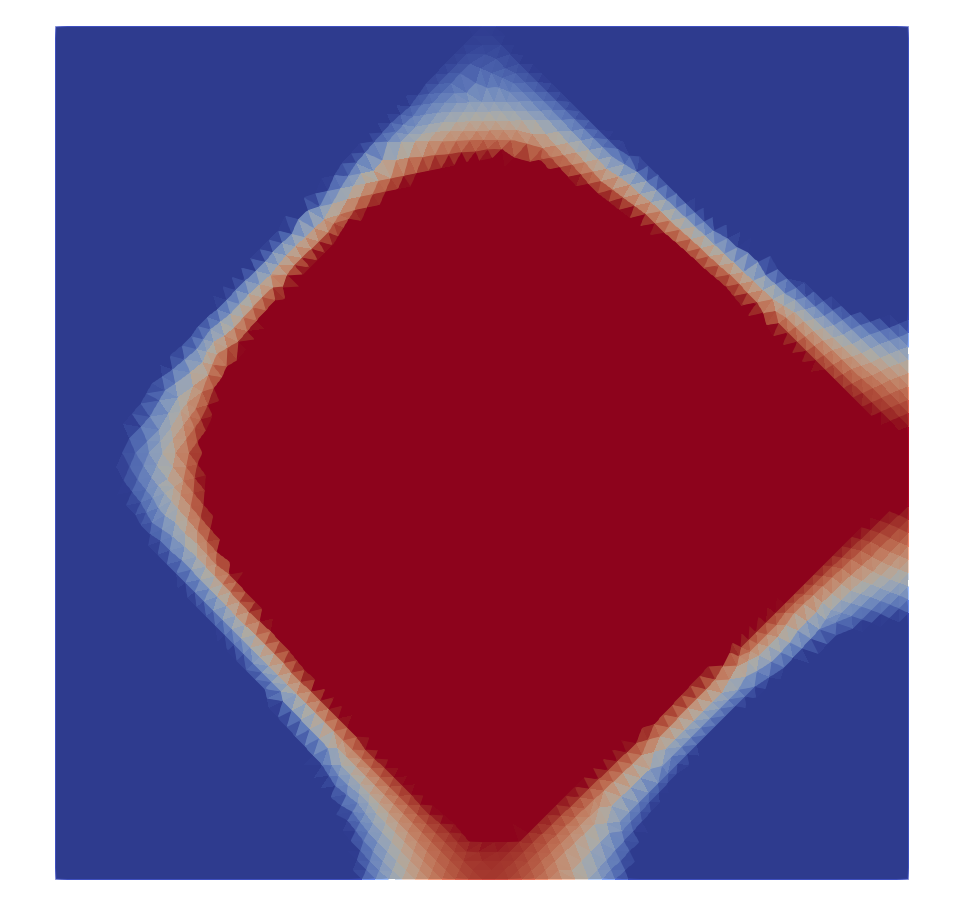} &
\includegraphics[width=0.155\textwidth]{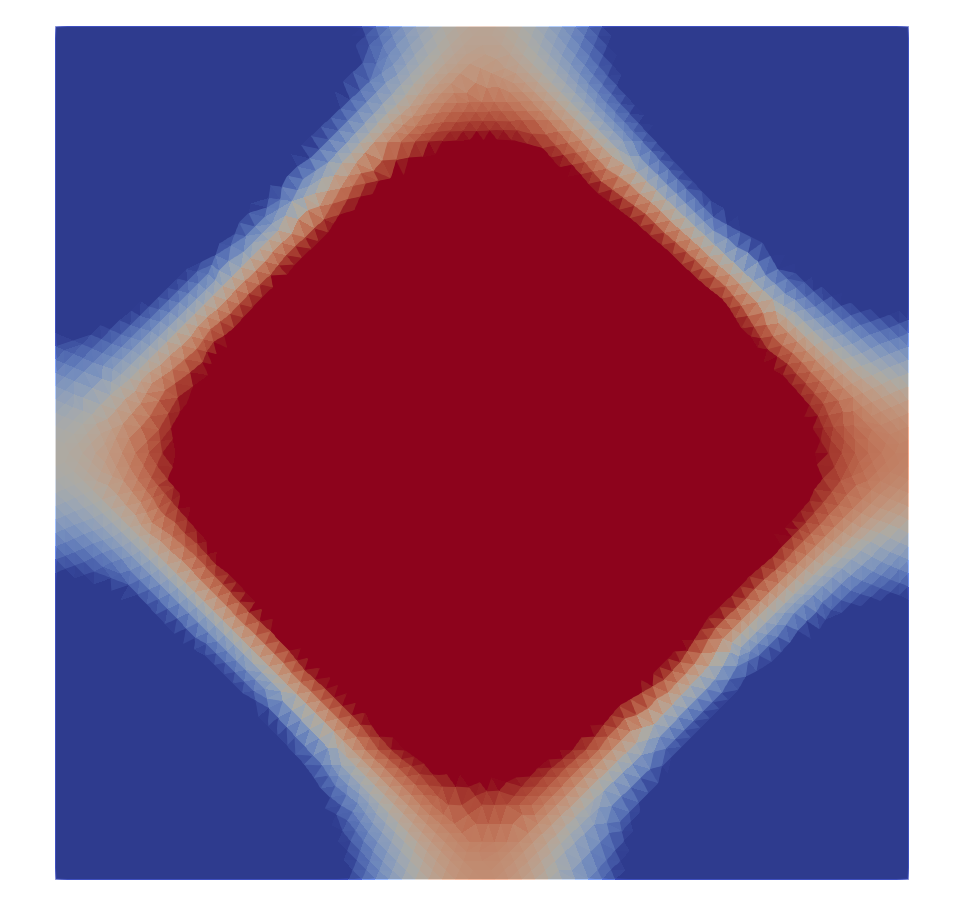} &
\includegraphics[width=0.155\textwidth]{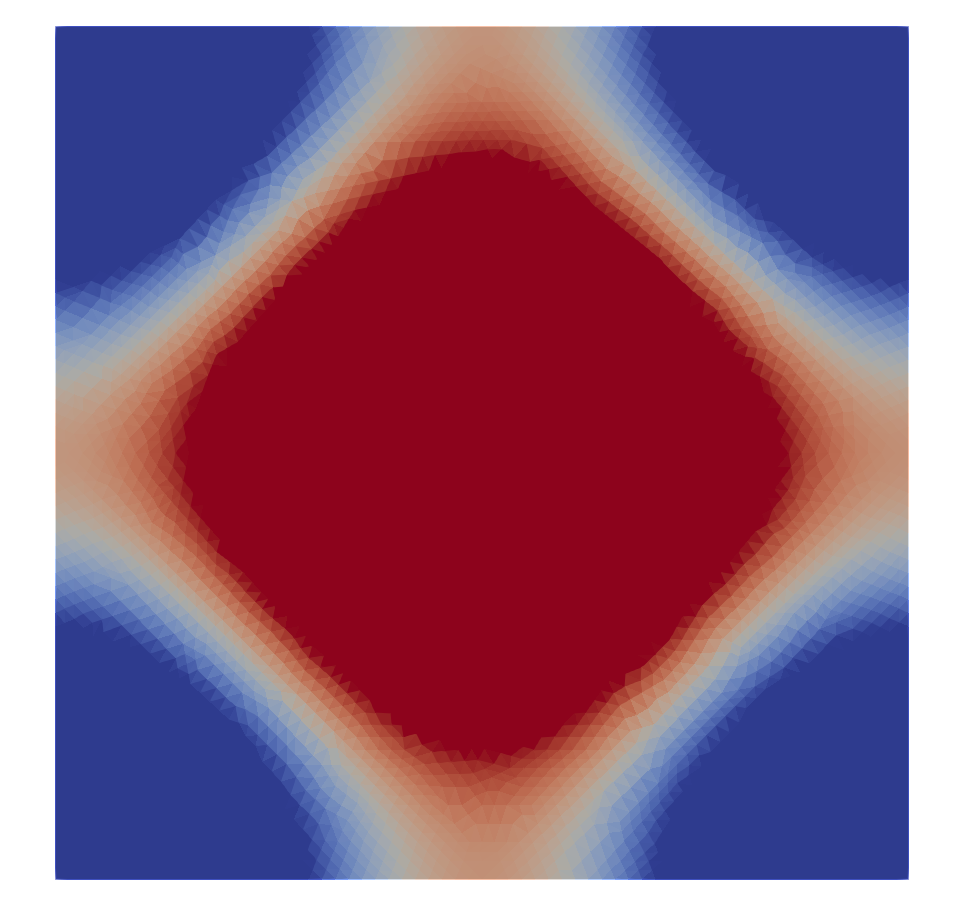} &
\includegraphics[width=0.155\textwidth]{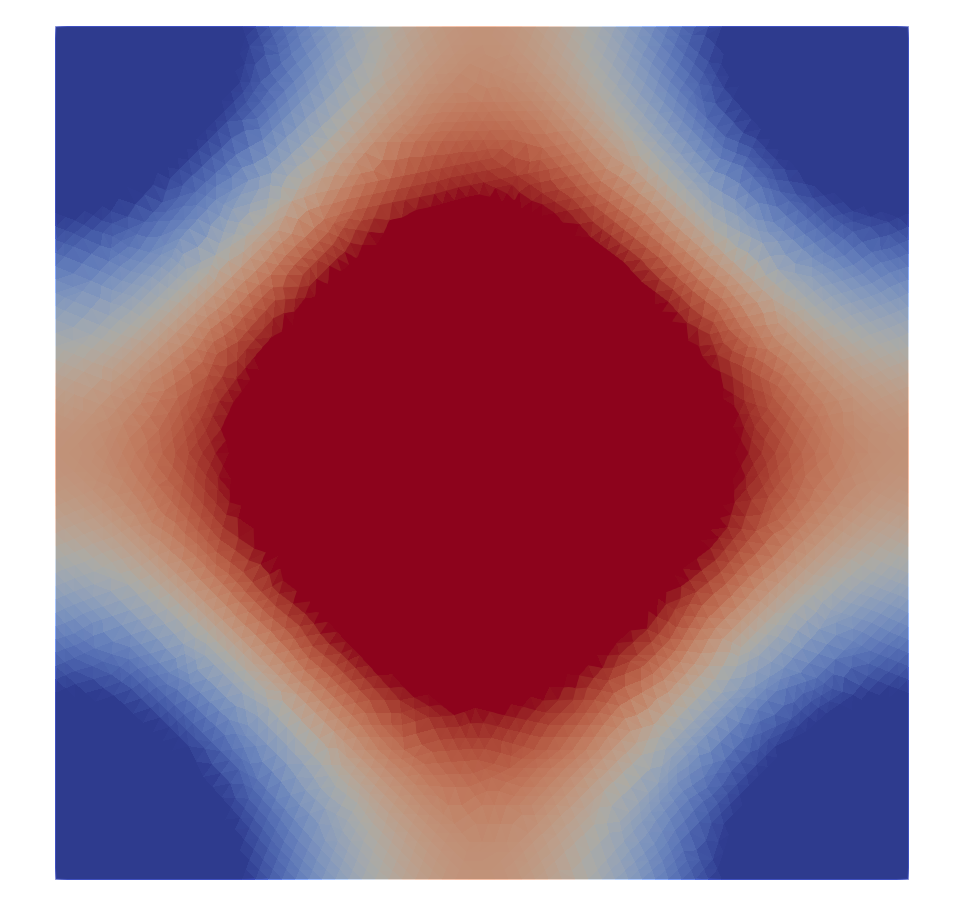} &
\includegraphics[width=0.155\textwidth]{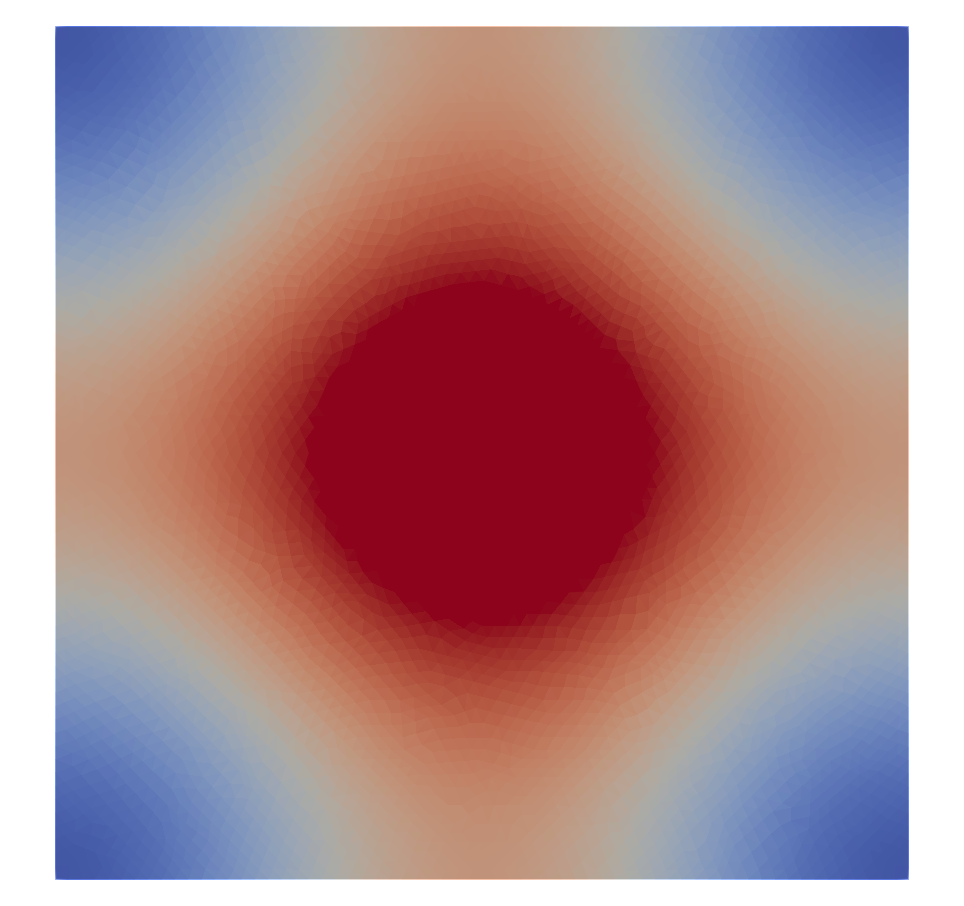} 
& \includegraphics[width=0.04\textwidth]{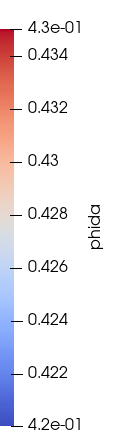}\\[2mm]

\raisebox{0.25\height}{\makebox[0pt][c]{\rotatebox{90}{\textbf{No nudging}}}} &
\includegraphics[width=0.155\textwidth]{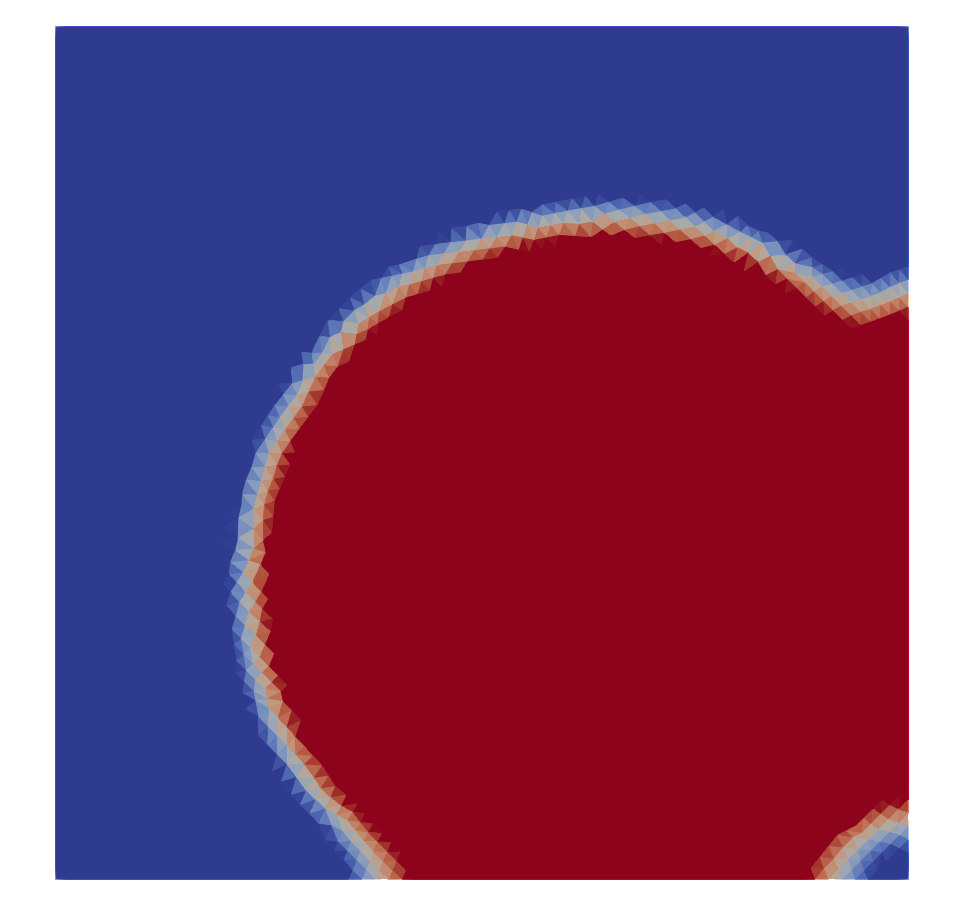} &
\includegraphics[width=0.155\textwidth]{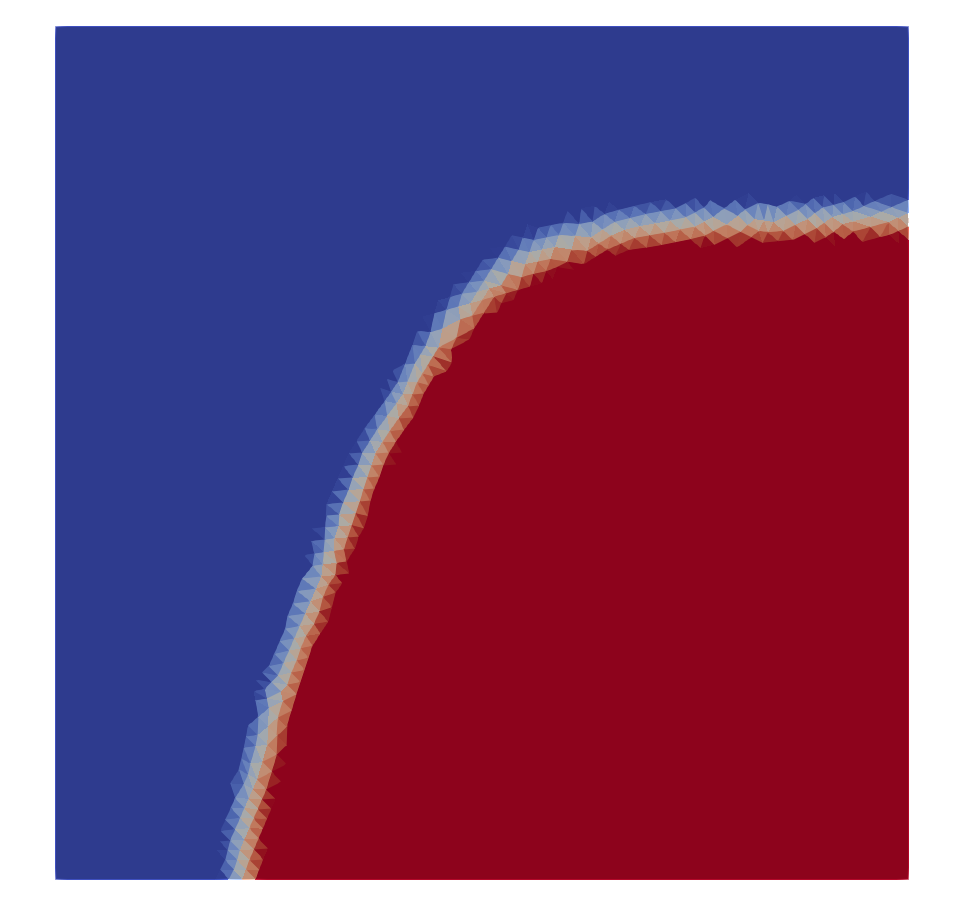} &
\includegraphics[width=0.155\textwidth]{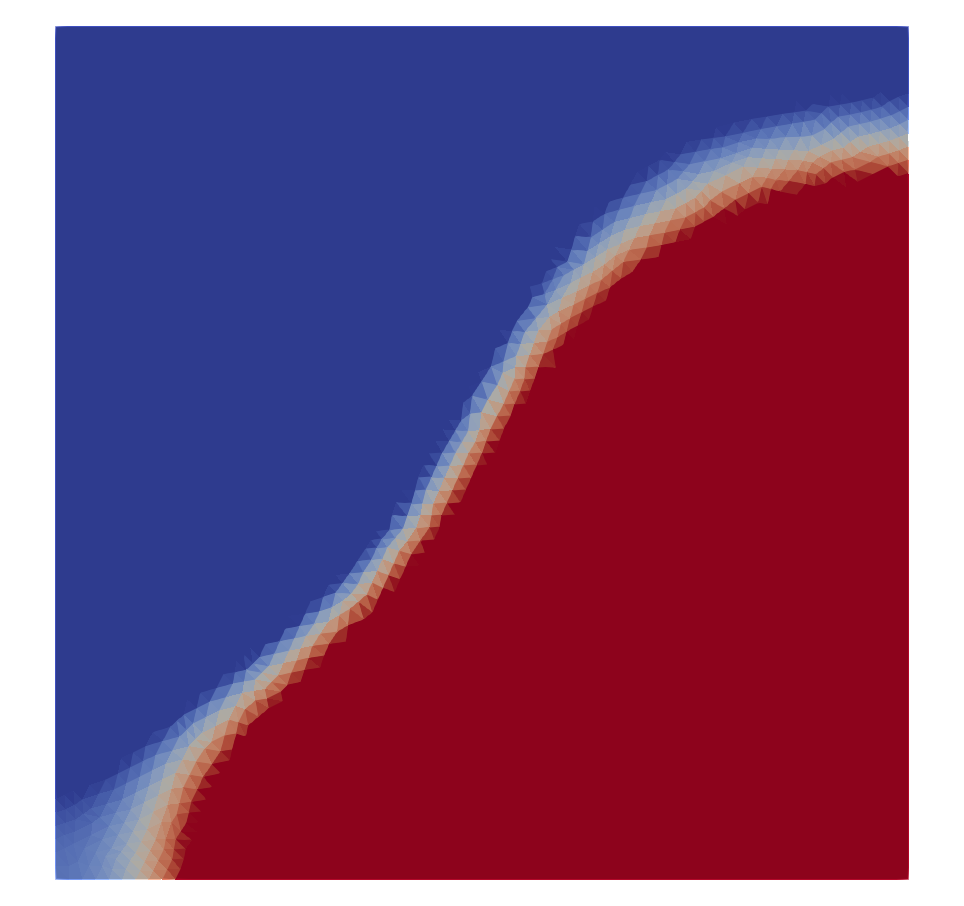} &
\includegraphics[width=0.155\textwidth]{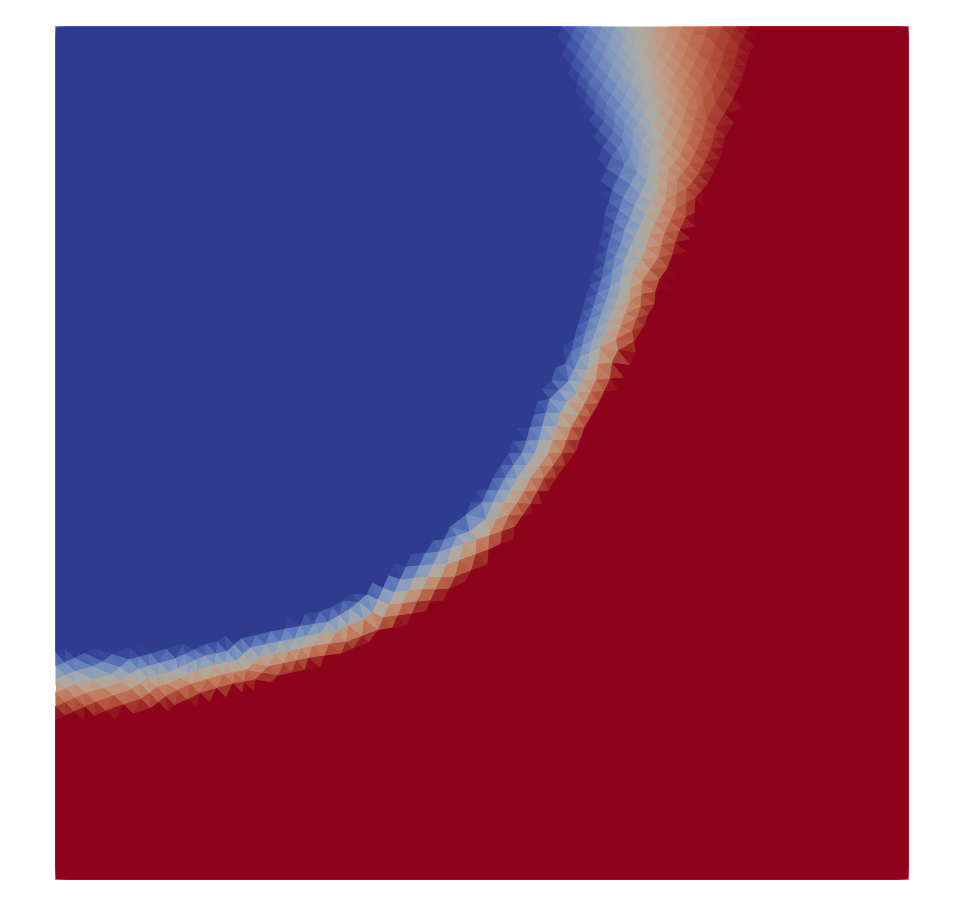} &
\includegraphics[width=0.155\textwidth]{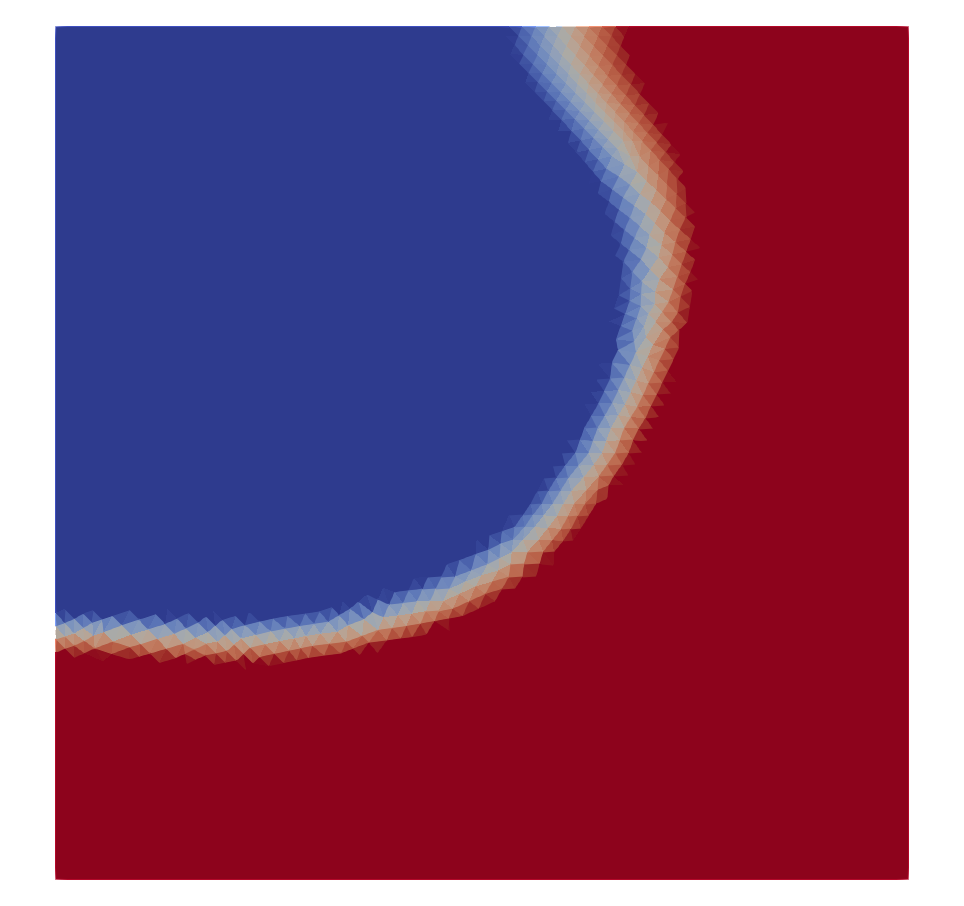} &
\includegraphics[width=0.155\textwidth]{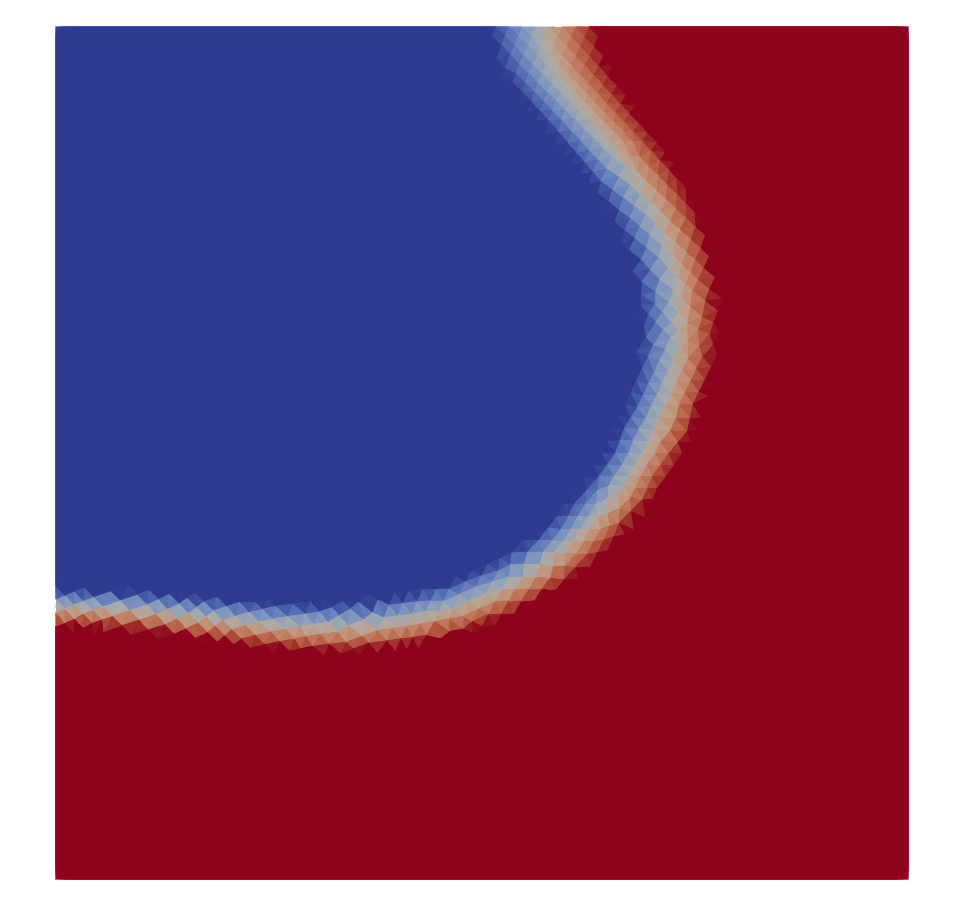}
& \includegraphics[width=0.04\textwidth]{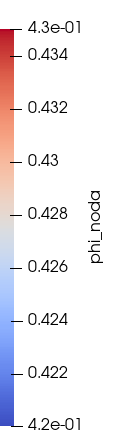}\\[1mm]

& $t=0$ & $t=2$ & $t=4$ & $t=6$ & $t=8$ & $t=10$
\end{tabular}
\caption{Order parameter snapshots. Top row: reference $\phi(t)$.
Middle row: assimilated $\varphi(t)$ with $\alpha_u=\alpha_\phi=\alpha_\psi=1$.
Bottom row: solution without nudging ($\alpha_u=\alpha_\phi=\alpha_\psi=0$).}
\label{fig:phi-compare-da-noda}
\end{figure}

\paragraph{Test 2: effects of initial conditions}
\label{subsec:opposite-test}

We use the same setup as in Test~1 (domain, boundary conditions, discretization, parameters, and observation operator $I_H$),
and we fix $\alpha_u=\alpha_\phi=\alpha_\psi=1$. The only difference is the assimilated initial data: we initialize the CDA state
from an opposite configuration,
\[
\varphi_0 = 1-\phi_0,\qquad \vec\zeta_0 = -\vec\psi_0=(y,-x),\qquad 
\vec v_0(x,y)=\bigl(0.20\sin(\pi y),\,0\bigr),\qquad \pi_0(x,y)\equiv 0.
\]
This provides a stringent test in which the initial droplet phase is inverted and the auxiliary field is sign-reversed.

Figure~\ref{fig:opposite-error} reports the synchronization errors at log scale. Despite the severe mismatch at $t=0$, all errors decay rapidly and then plateau at a discretization-limited level.

\begin{figure}[H]
\centering
\includegraphics[width=0.32\textwidth]{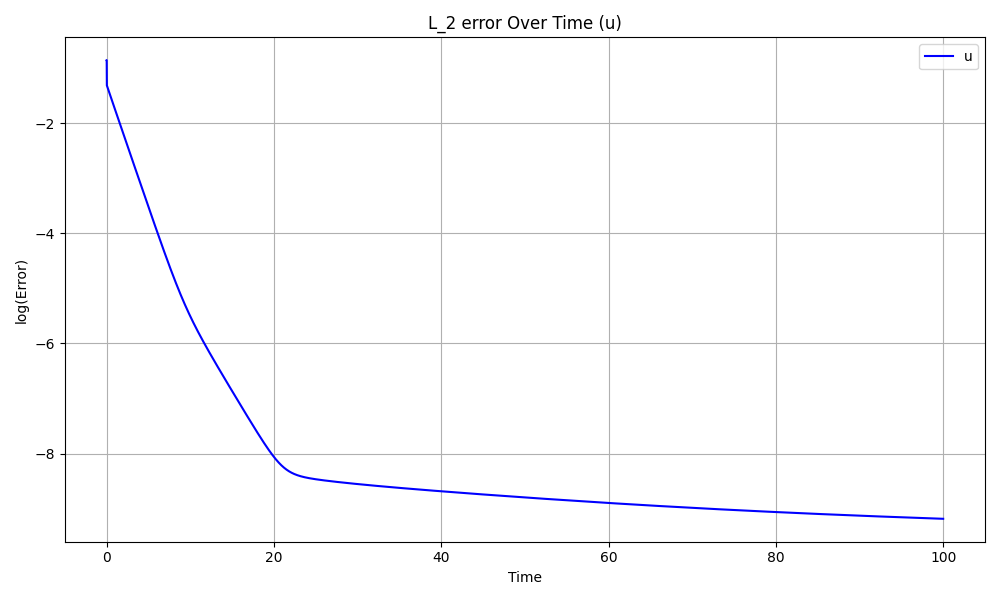}\hfill
\includegraphics[width=0.32\textwidth]{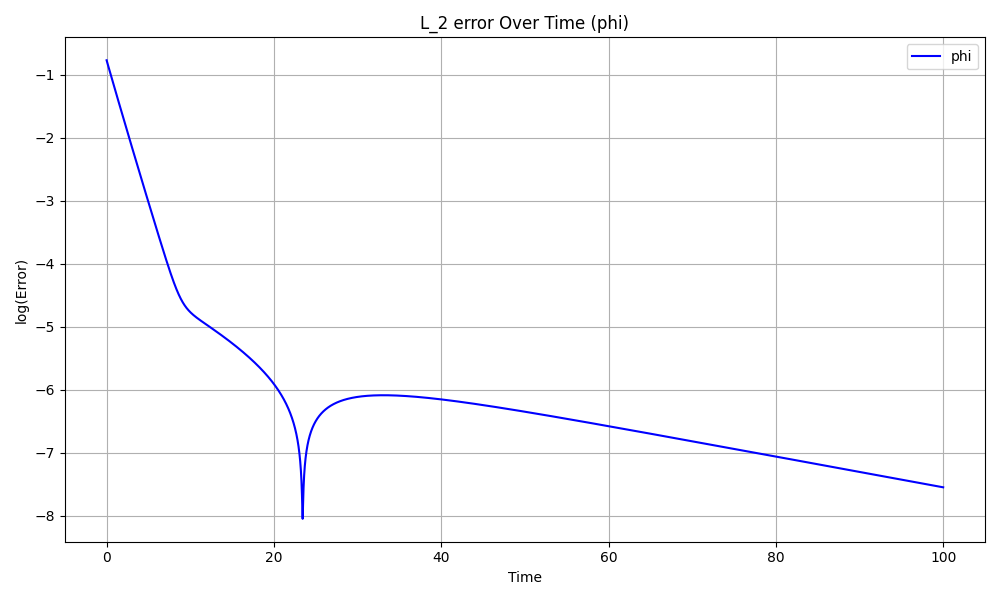}\hfill
\includegraphics[width=0.32\textwidth]{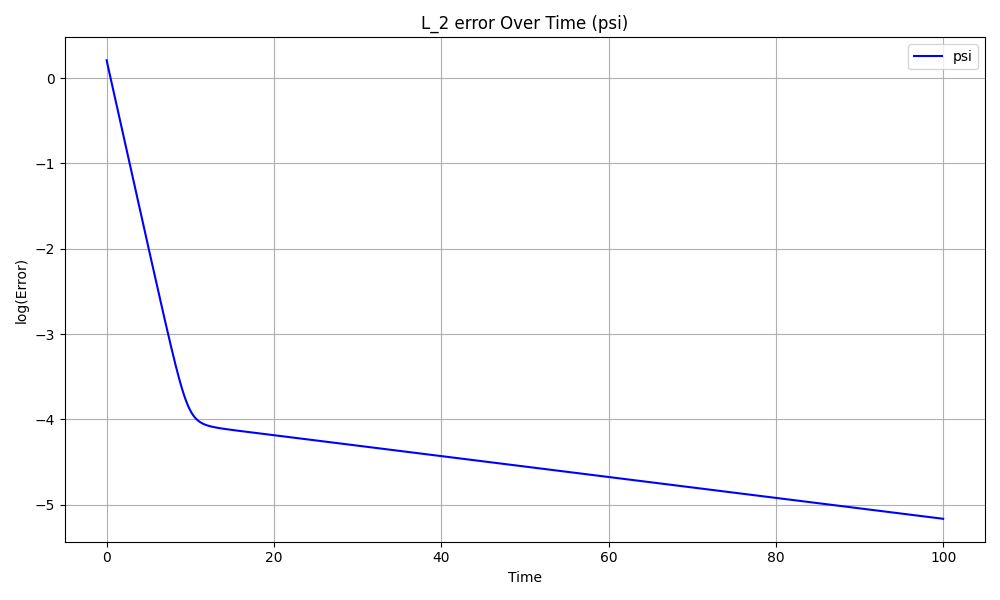}
\caption{Opposite-initialization test: logarithmic $L^2$-errors in $\vec u$, $\phi$, and $\vec\psi$.}
\label{fig:opposite-error}
\end{figure}

Figure~\ref{fig:phi-compare-opposite} compares order-parameter snapshots for the reference and assimilated runs at early times. The assimilated state, initialized from $\varphi_0=1-\phi_0$, rapidly aligns with the reference interface.

\begin{figure}[H]
\centering
\setlength{\tabcolsep}{1pt}
\renewcommand{\arraystretch}{0}
\scriptsize
\begin{tabular}{@{}c*{7}{c}@{}}

\raisebox{0.4\height}{\makebox[0pt][c]{\rotatebox{90}{\textbf{Reference}}}} &
\includegraphics[width=0.155\textwidth]{fig/da/phi0t.png} &
\includegraphics[width=0.155\textwidth]{fig/da/phi2t.png} &
\includegraphics[width=0.155\textwidth]{fig/da/phi4t.png} &
\includegraphics[width=0.155\textwidth]{fig/da/phi6t.png} &
\includegraphics[width=0.155\textwidth]{fig/da/phi8t.png} &
\includegraphics[width=0.155\textwidth]{fig/da/phi10t.png}
& \includegraphics[width=0.04\textwidth]{fig/da/bar.png}\\[2mm]

\raisebox{1.2\height}{\makebox[0pt][c]{\rotatebox{90}{\textbf{CDA}}}} &
\includegraphics[width=0.155\textwidth]{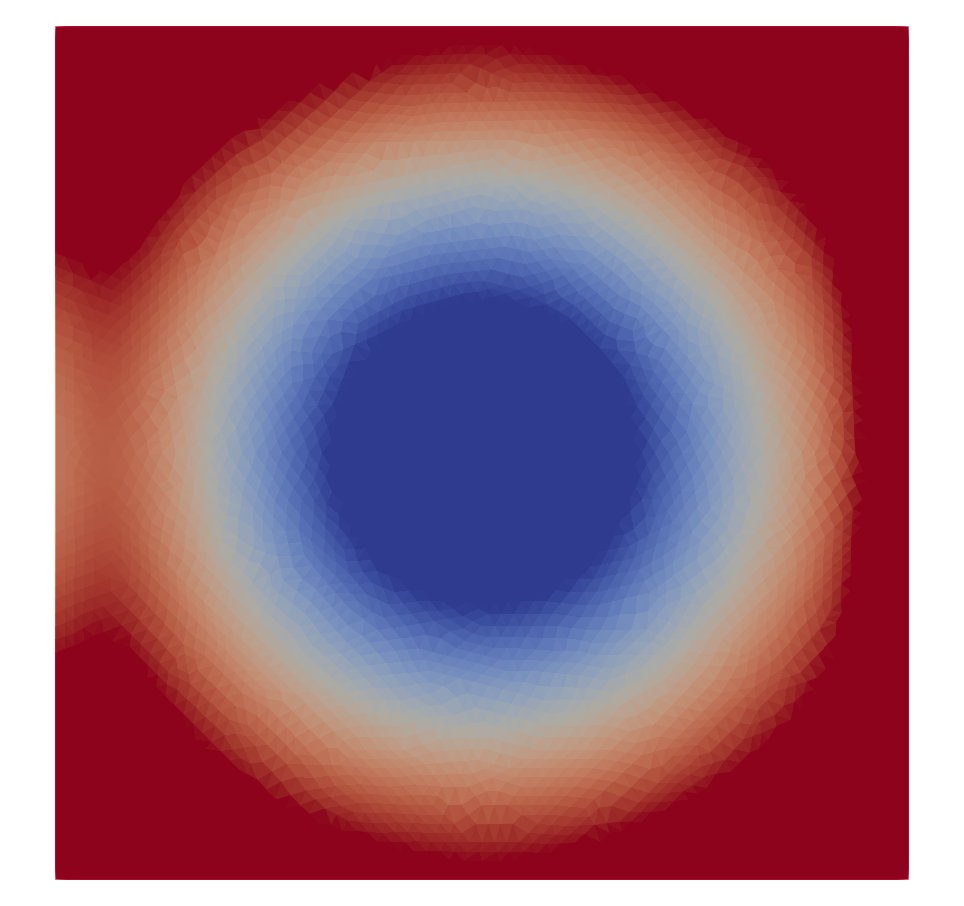} &
\includegraphics[width=0.155\textwidth]{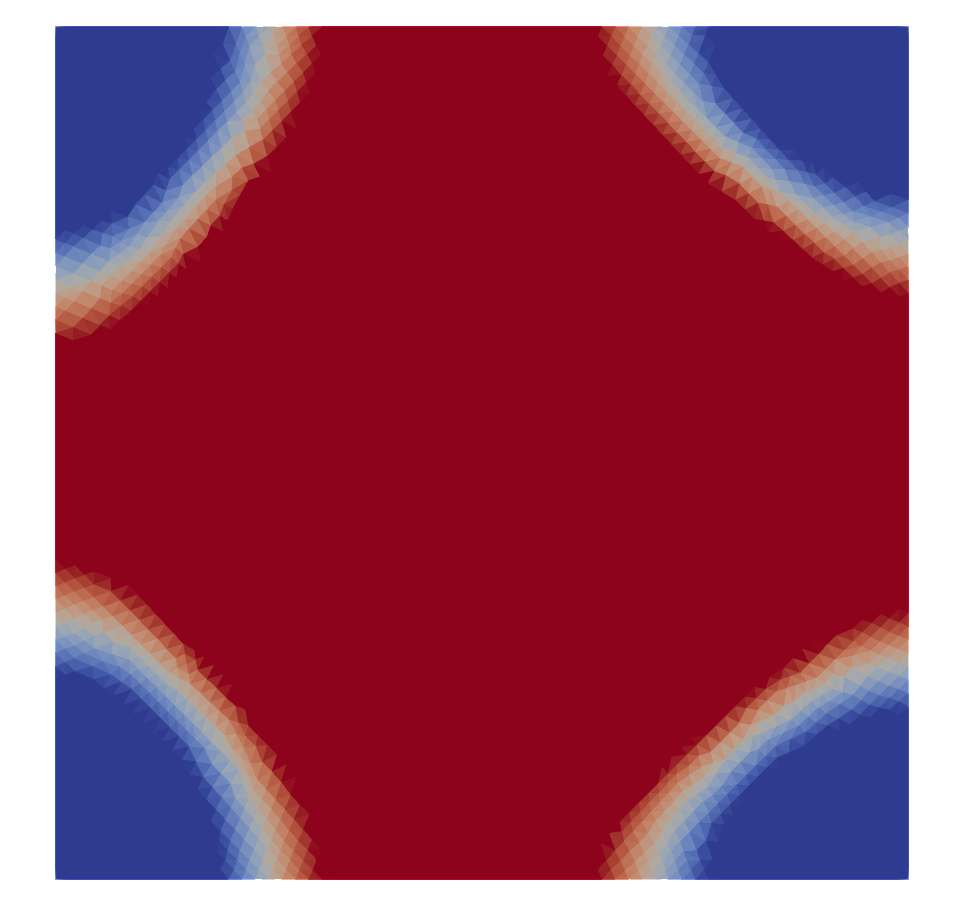} &
\includegraphics[width=0.155\textwidth]{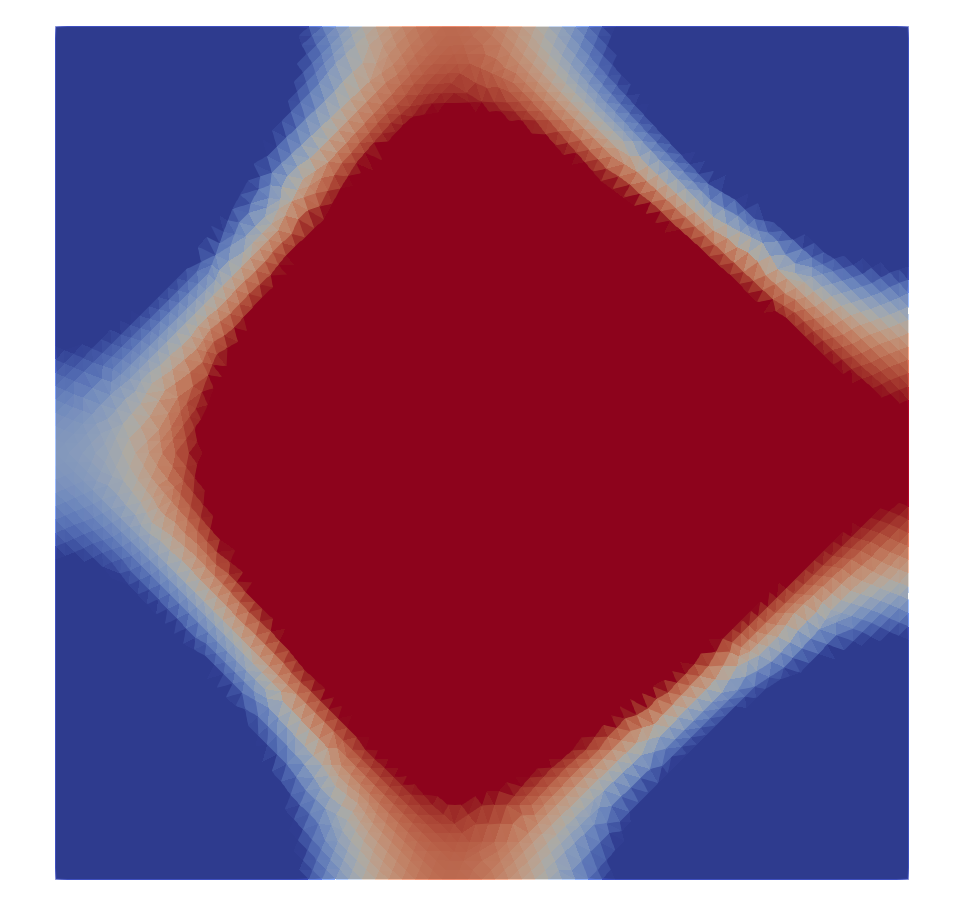} &
\includegraphics[width=0.155\textwidth]{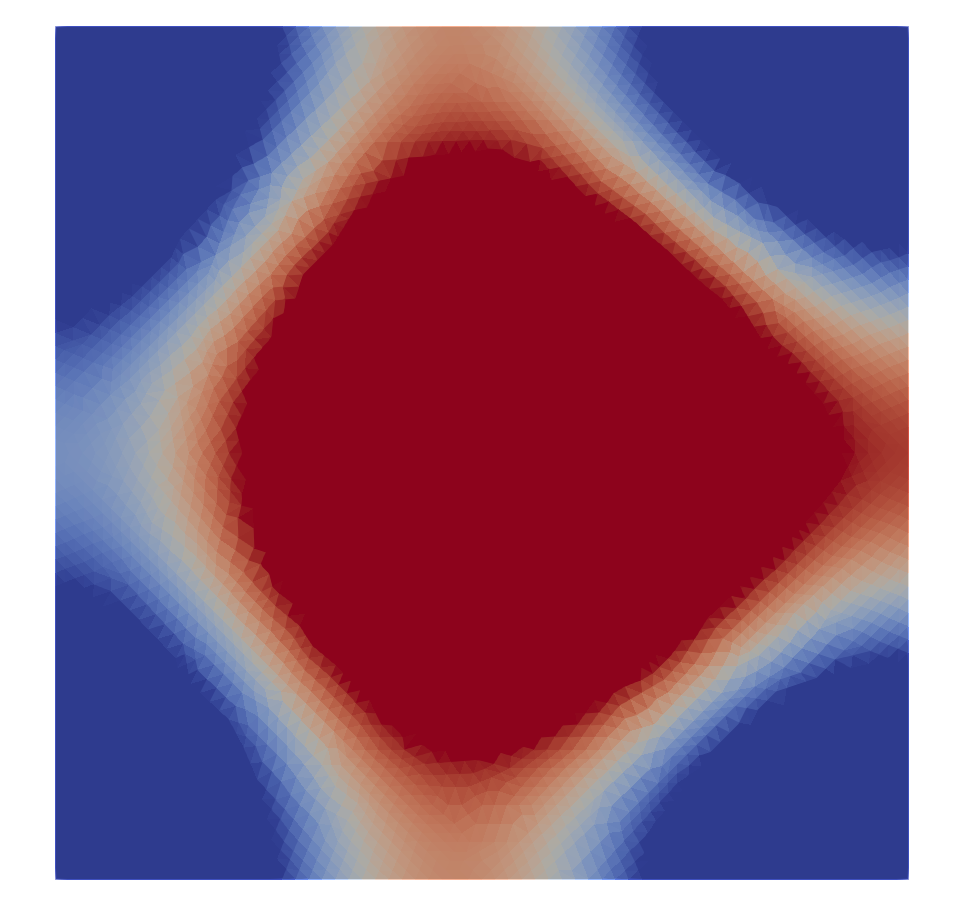} &
\includegraphics[width=0.155\textwidth]{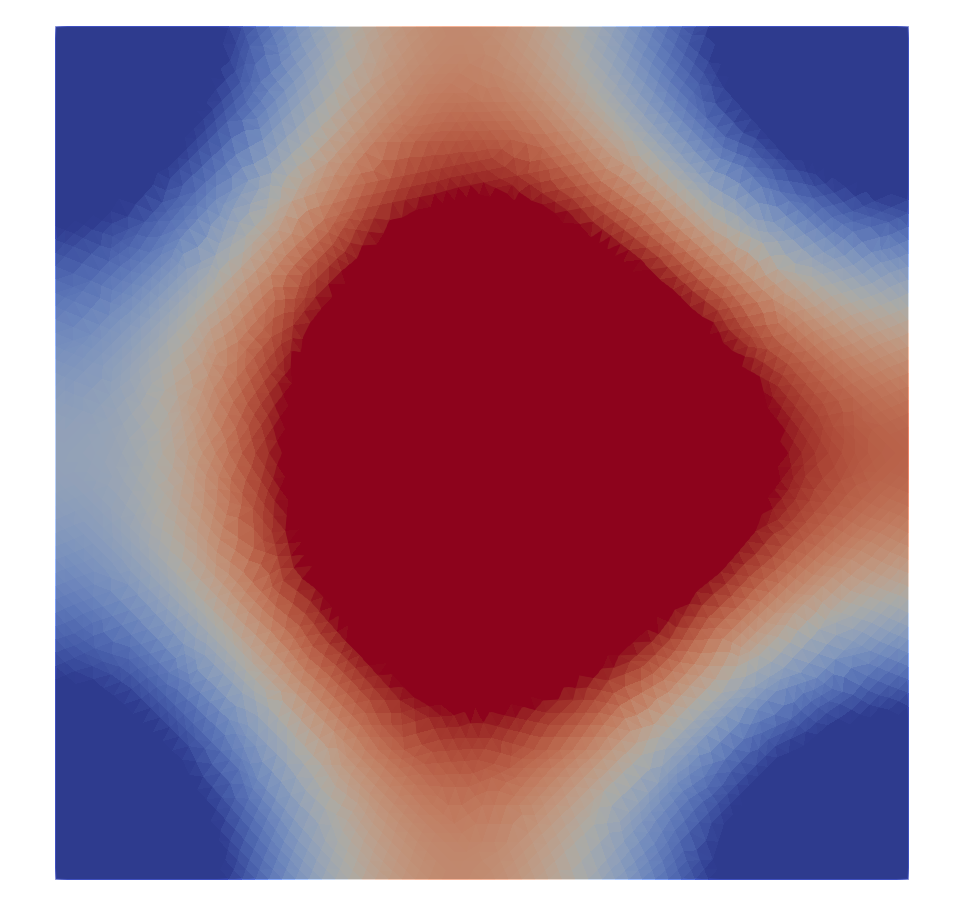} &
\includegraphics[width=0.155\textwidth]{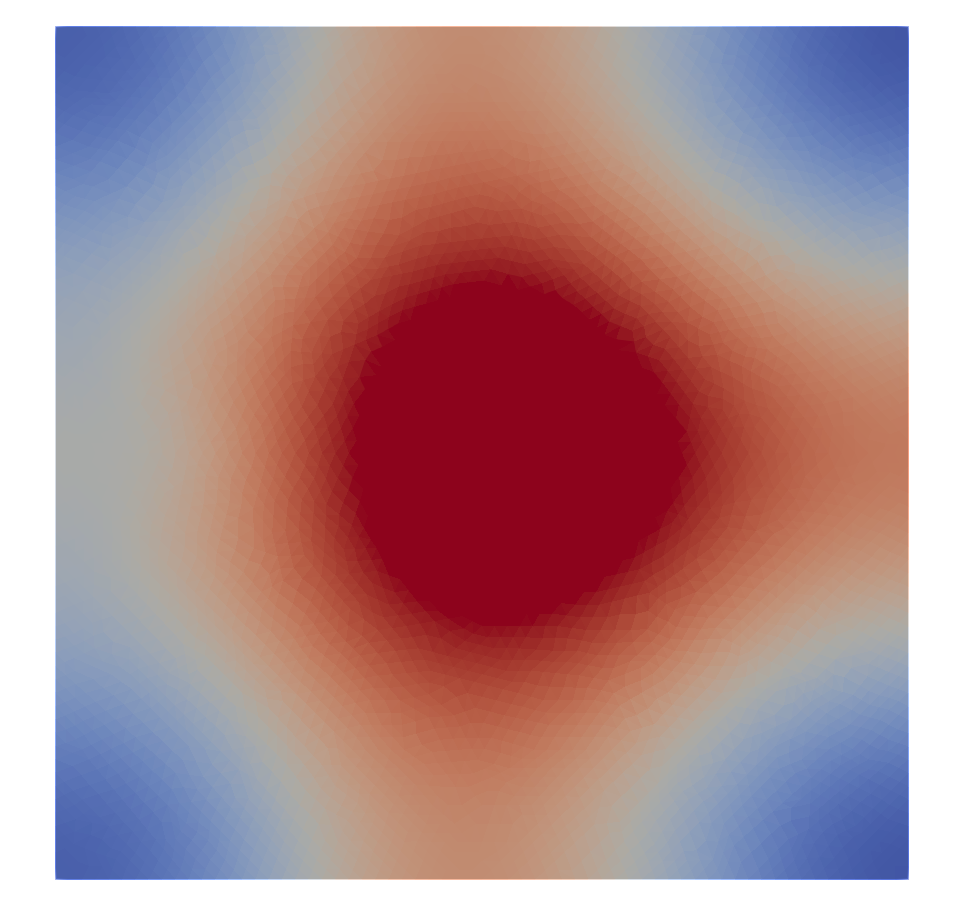}
& \includegraphics[width=0.04\textwidth]{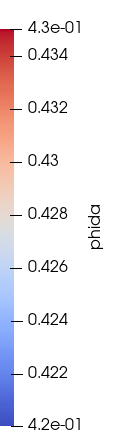}\\[1mm]

& $t=0$ & $t=2$ & $t=4$ & $t=6$ & $t=8$ & $t=10$
\end{tabular}
\caption{Order parameter snapshots. Top row: reference $\phi(t)$. Bottom row: assimilated $\varphi(t)$ initialized by $\varphi_0=1-\phi_0$.}
\label{fig:phi-compare-opposite}
\end{figure}

\paragraph{Test 3: effects of boundary conditions}
\label{subsec:shear-test}

We consider the same setup as in Test~1 (domain, discretization, observation operator $I_H$, and initial data), but now impose a
shear flow by prescribing a moving-lid boundary condition on the top boundary:
\[
\vec u=(U_0,0)\ \text{on }\{y=1\},\qquad
\vec u=\vec 0\ \text{on }\{y=0\}\cup\{x=0\}\cup\{x=1\},
\qquad U_0=2.
\]
The nudging parameters are fixed as $\alpha_u=\alpha_\phi=\alpha_\psi=1$, and we set $\mathrm{Re}=100$ for this test.

Figure~\ref{fig:shear-energy} reports the evolution of the total energy and its components. In contrast to Test~1, the moving lid injects kinetic energy into the system, and therefore, the total energy is not monotone. The kinetic energy increases as the shear develops, while the mixing energy rapidly relaxes to an approximately steady value, and the elastic contribution decays quickly.

\begin{figure}[H]
\centering
\includegraphics[width=0.48\textwidth]{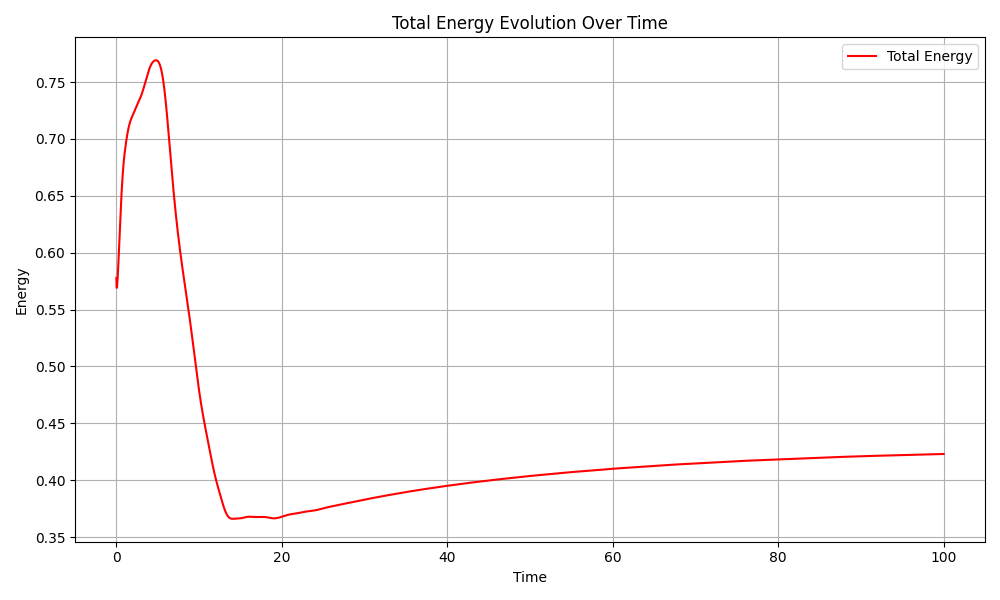}\hfill
\includegraphics[width=0.48\textwidth]{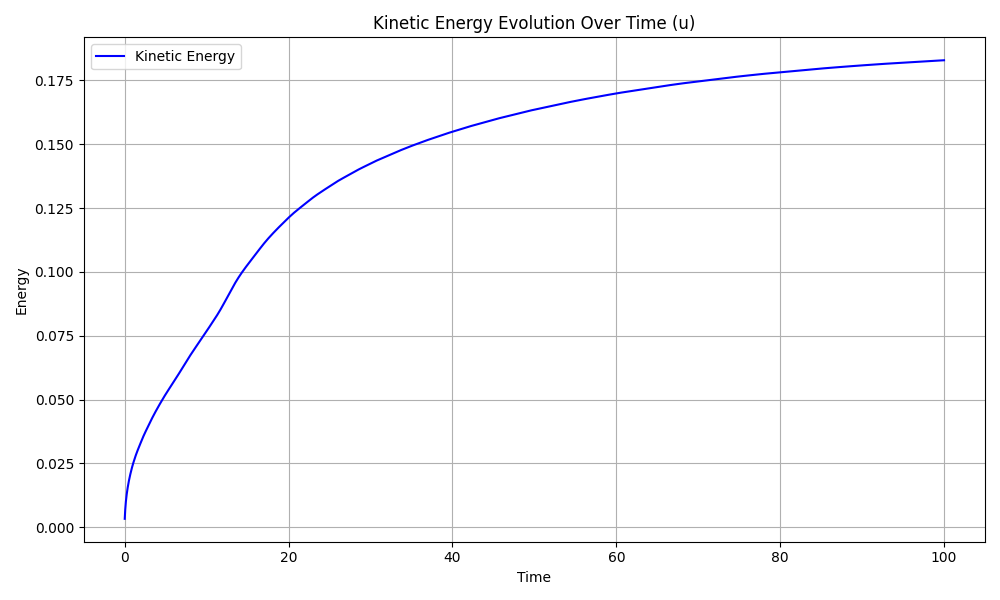}\\[0.5ex]
\includegraphics[width=0.48\textwidth]{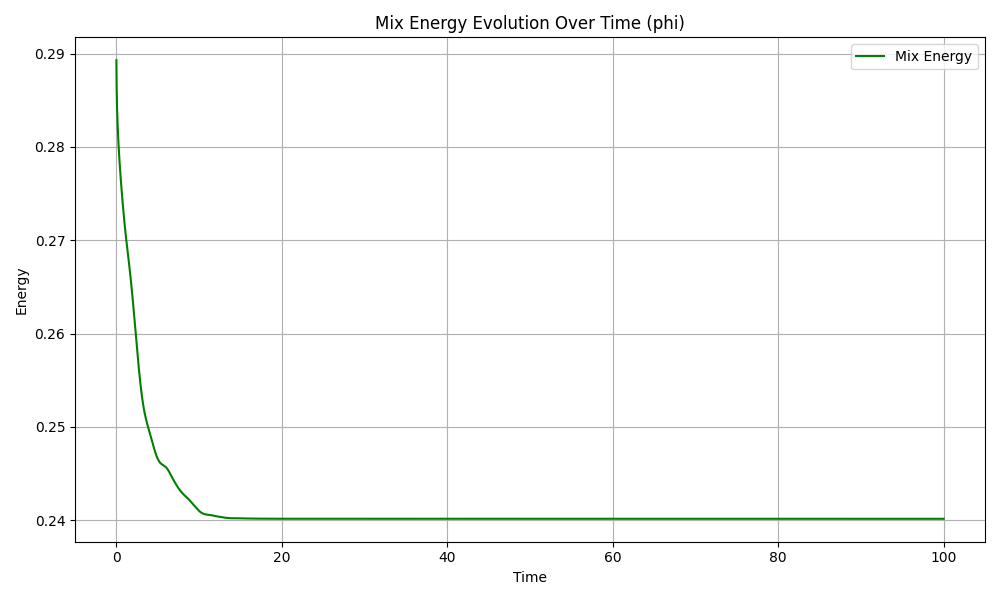}\hfill
\includegraphics[width=0.48\textwidth]{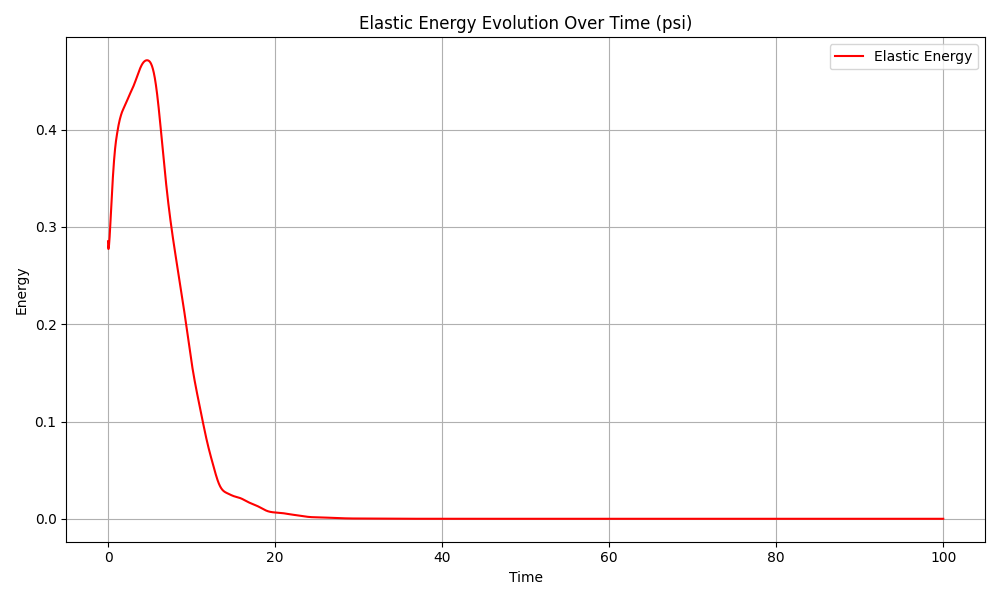}
\caption{Shear-driven test: total energy and its components.}
\label{fig:shear-energy}
\end{figure}

We next show the synchronization errors on a logarithmic scale. As shown in Figure~\ref{fig:shear-error}, the errors decay rapidly and then plateau at a discretization-limited level.

\begin{figure}[H]
\centering
\includegraphics[width=0.32\textwidth]{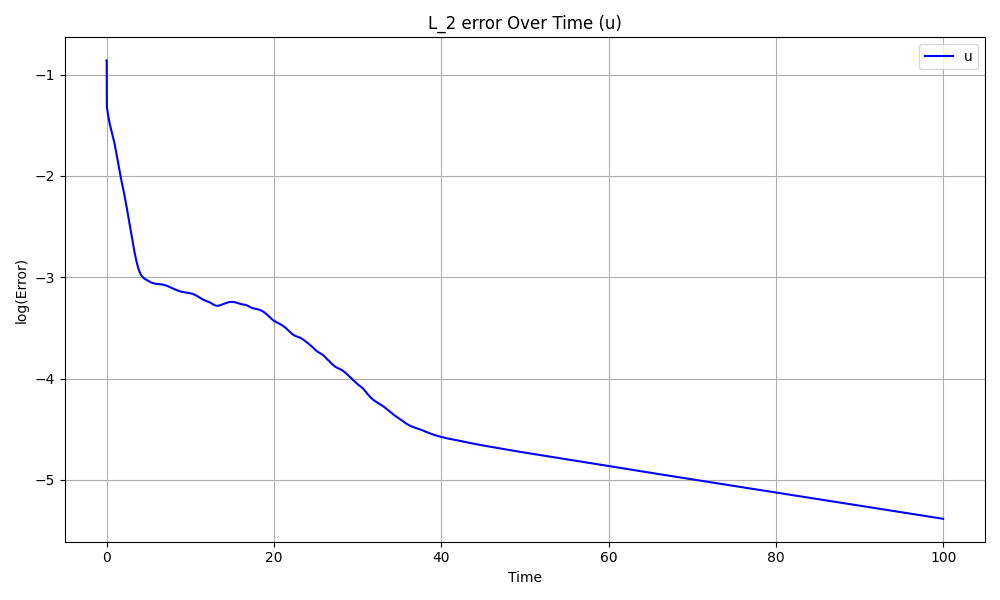}\hfill
\includegraphics[width=0.32\textwidth]{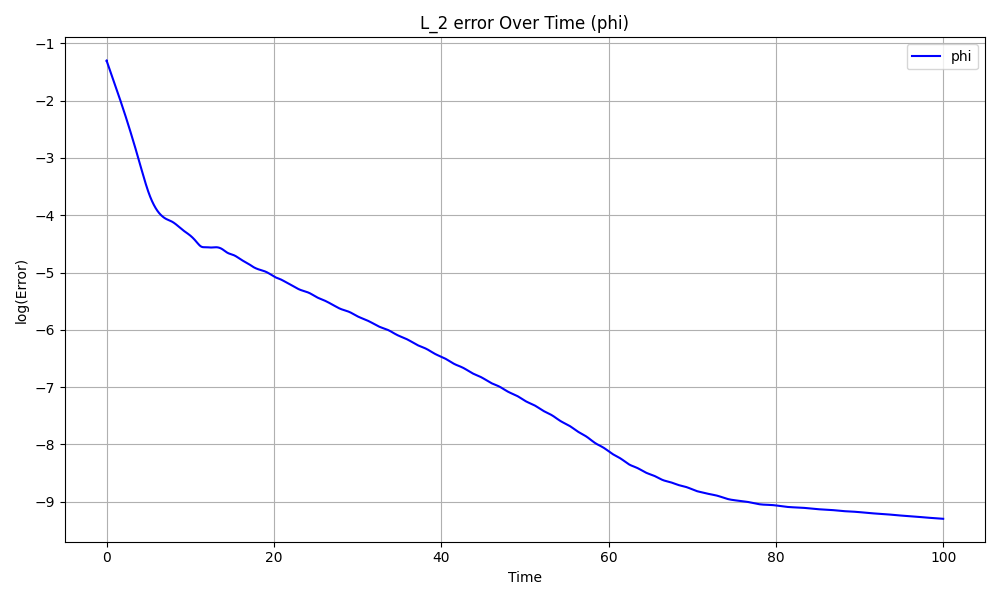}\hfill
\includegraphics[width=0.32\textwidth]{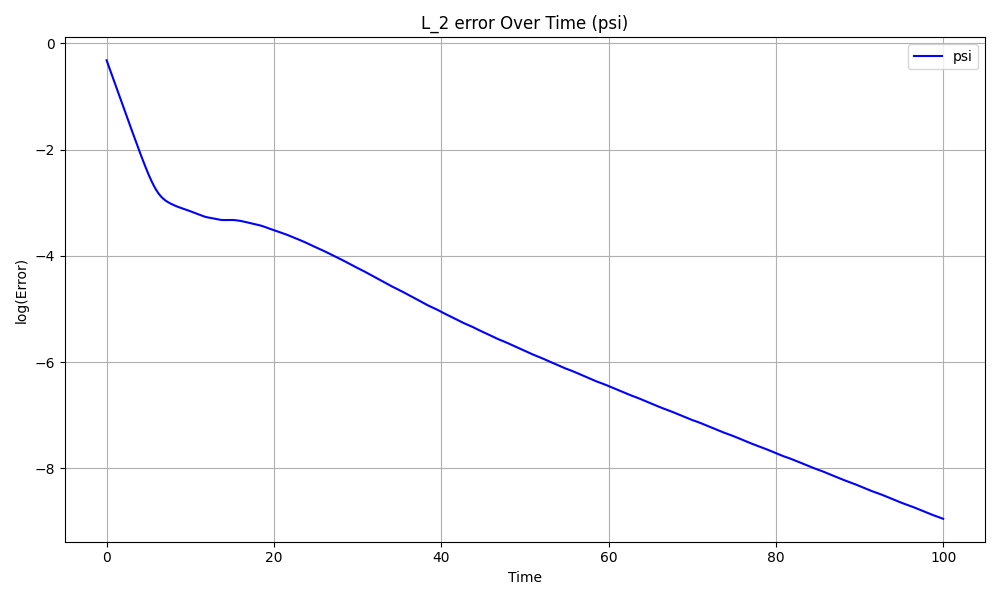}
\caption{Shear-driven test: logarithmic $L^2$-errors in $\vec u$, $\phi$, and $\vec\psi$.}
\label{fig:shear-error}
\end{figure}

We finally compare order-parameter snapshots for the reference and assimilated runs at early times. Despite the initial mismatch in droplet
location and radius, the assimilated interface rapidly aligns with the reference interface under the imposed shear.

\begin{figure}[H]
\centering
\setlength{\tabcolsep}{1pt}
\renewcommand{\arraystretch}{0}
\scriptsize
\begin{tabular}{@{}c*{7}{c}@{}}

\raisebox{0.4\height}{\makebox[0pt][c]{\rotatebox{90}{\textbf{Reference}}}} &
\includegraphics[width=0.155\textwidth]{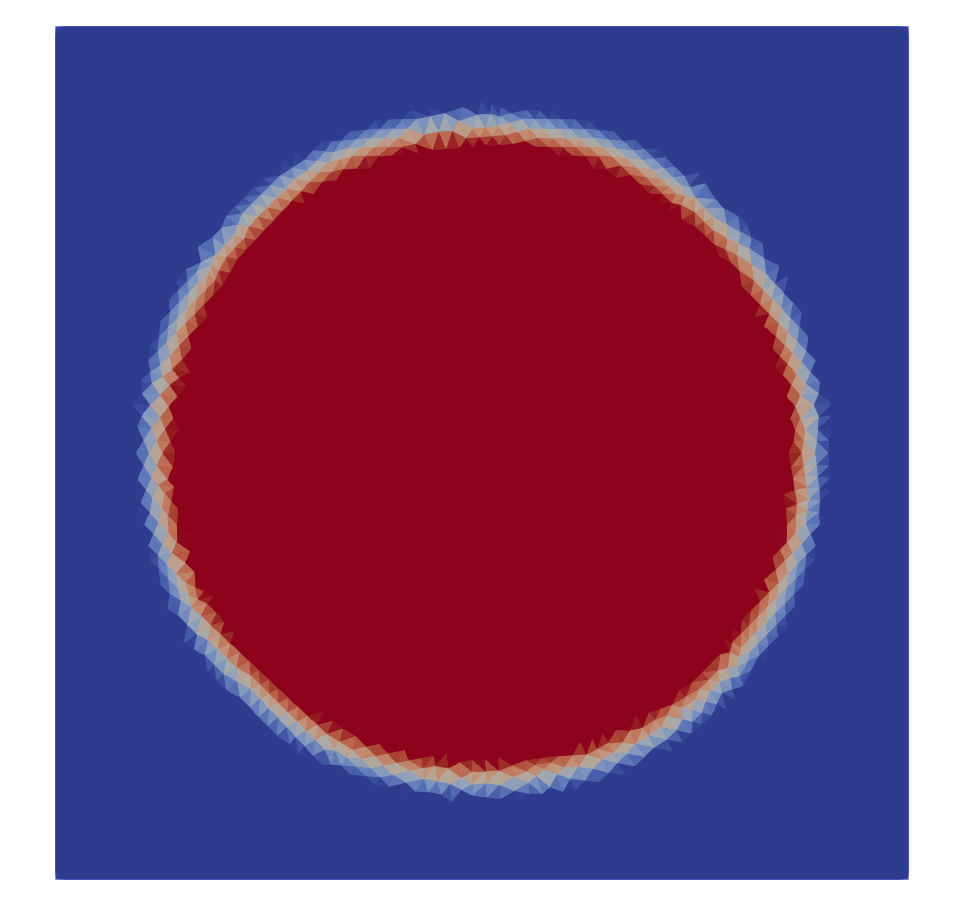} &
\includegraphics[width=0.155\textwidth]{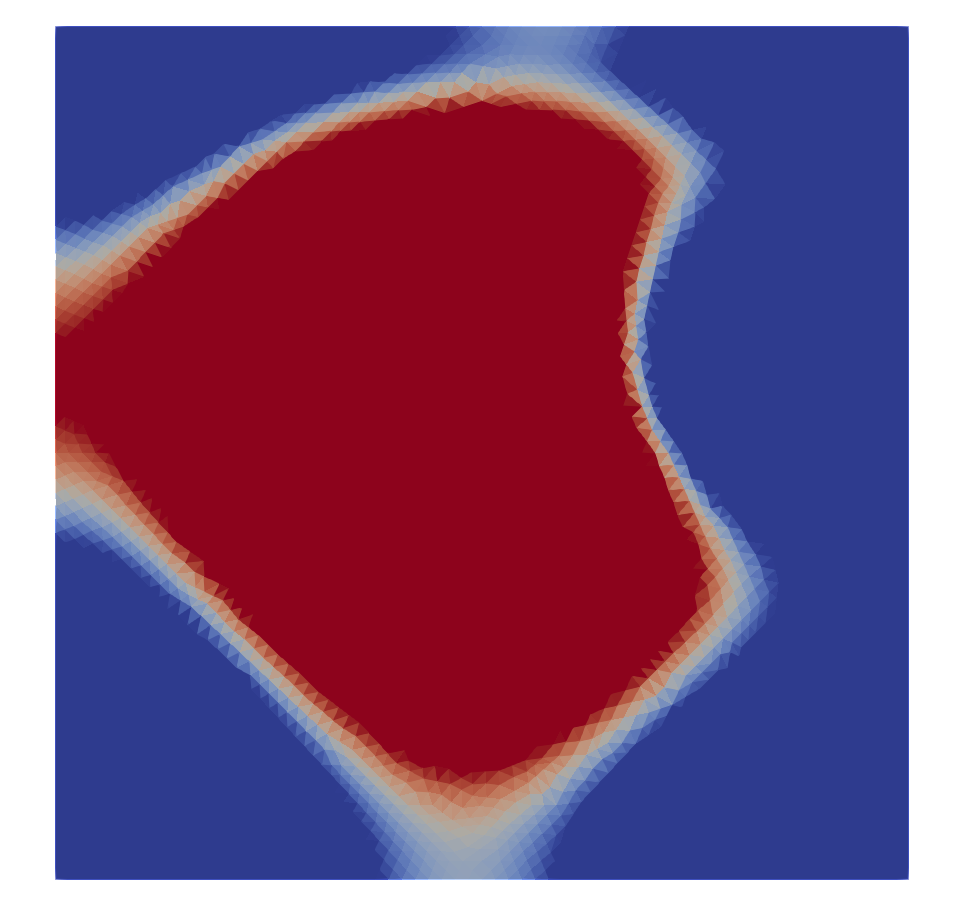} &
\includegraphics[width=0.155\textwidth]{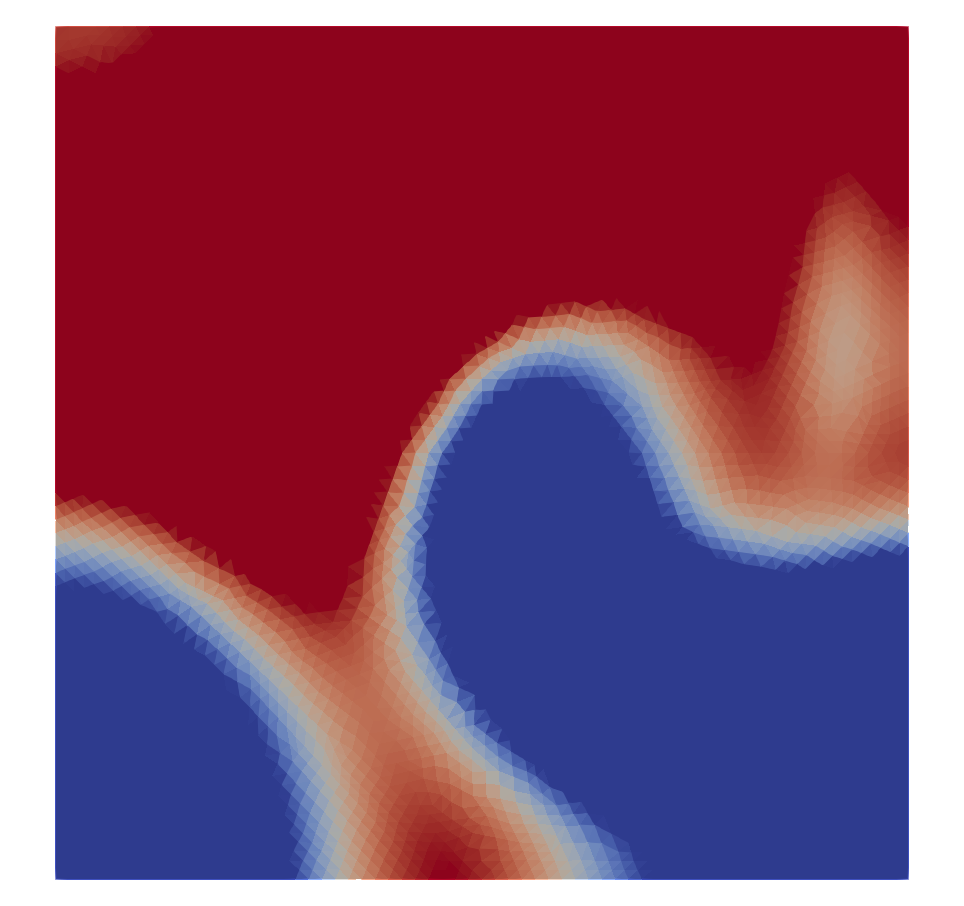} &
\includegraphics[width=0.155\textwidth]{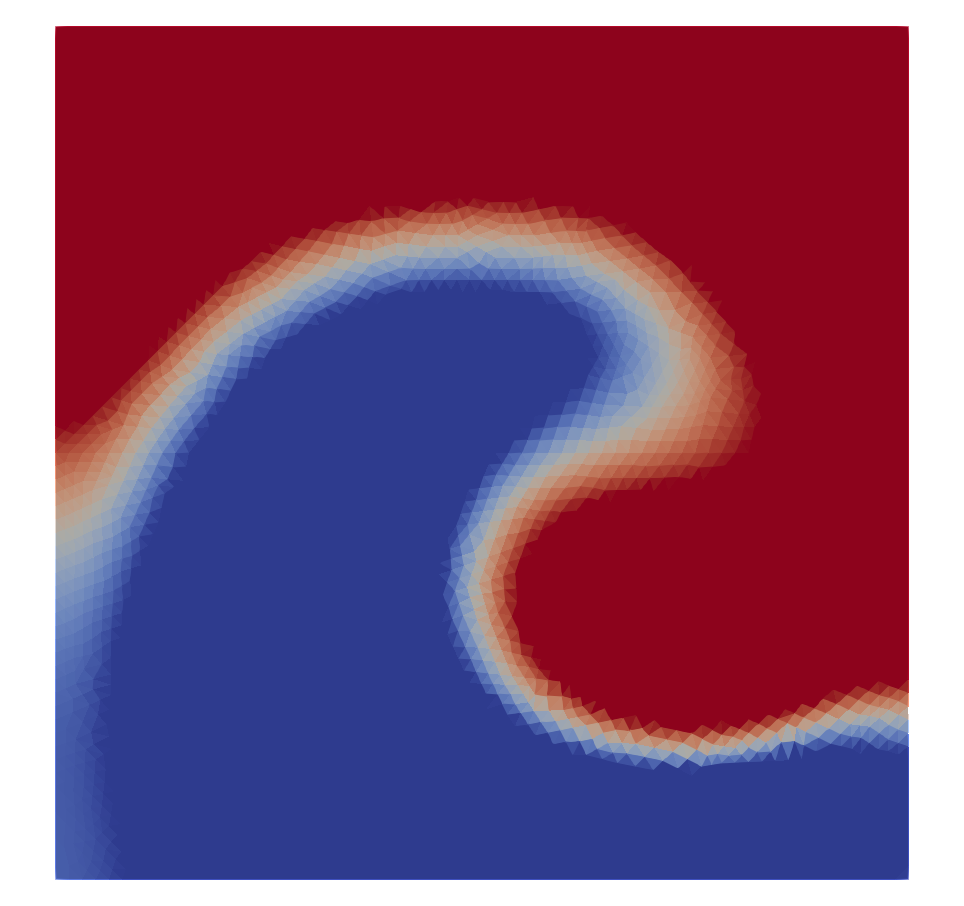} &
\includegraphics[width=0.155\textwidth]{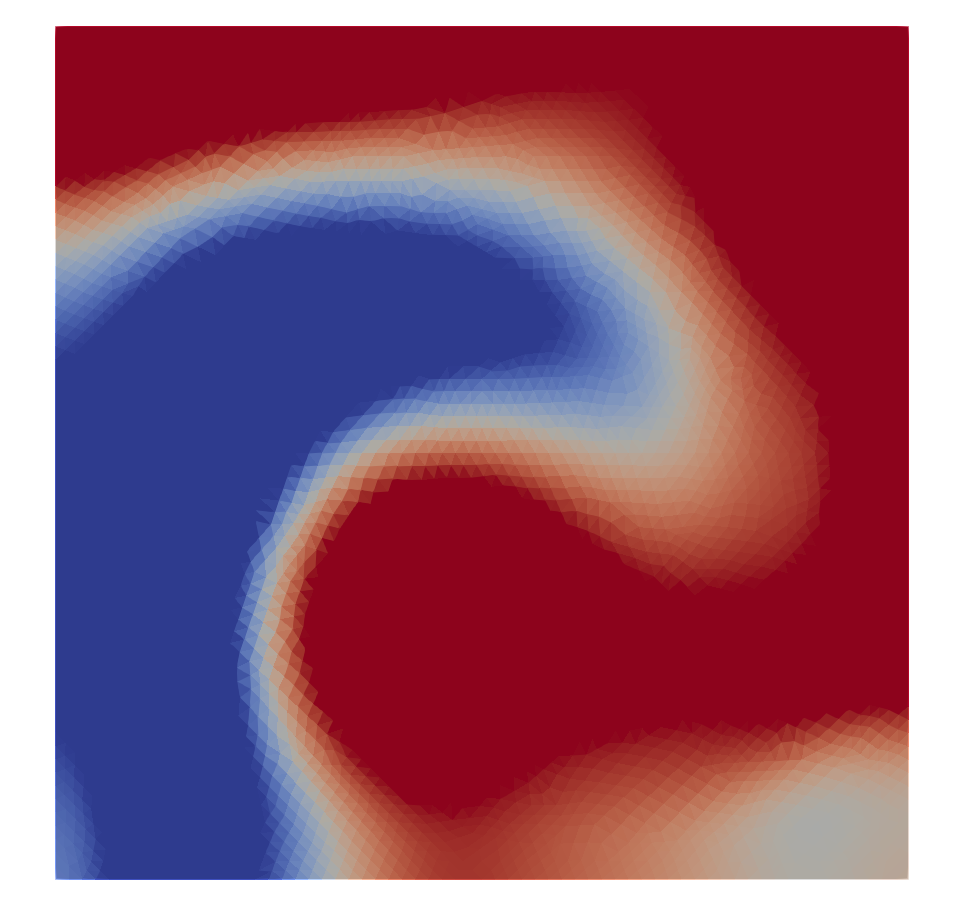} &
\includegraphics[width=0.155\textwidth]{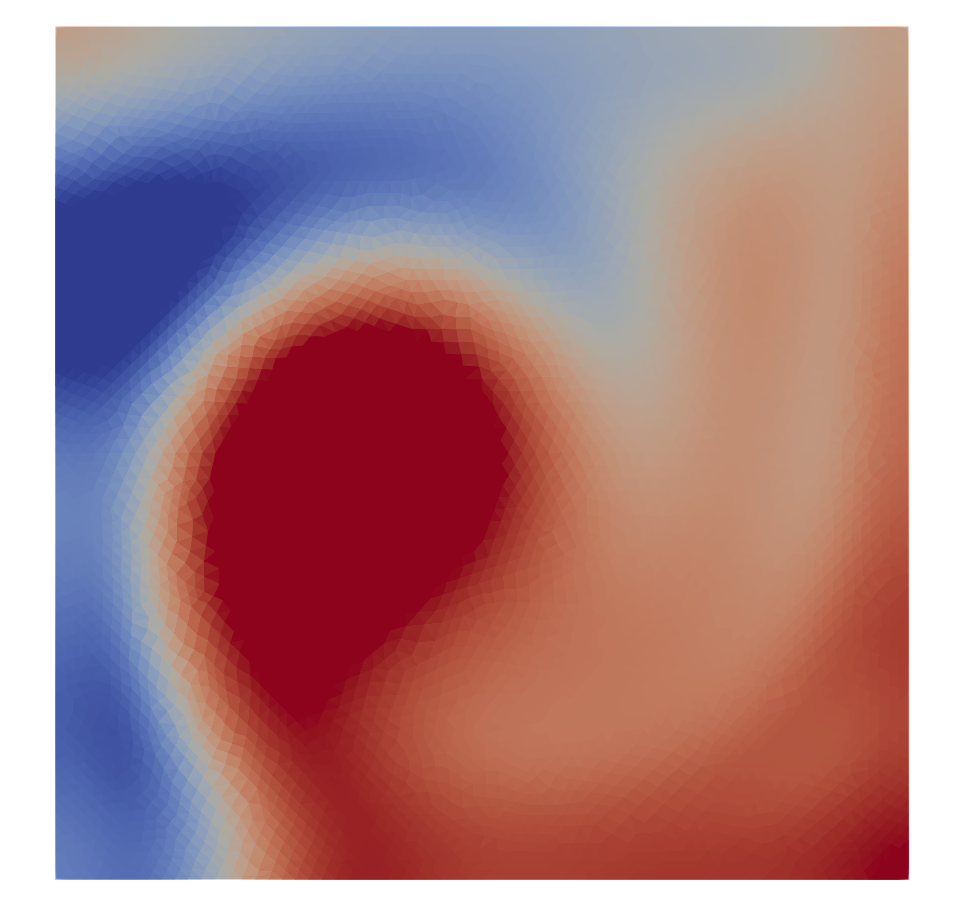}
& \includegraphics[width=0.04\textwidth]{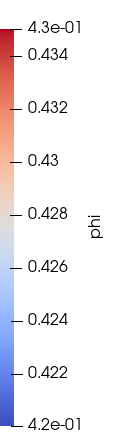}\\[2mm]

\raisebox{1.2\height}{\makebox[0pt][c]{\rotatebox{90}{\textbf{CDA}}}} &
\includegraphics[width=0.155\textwidth]{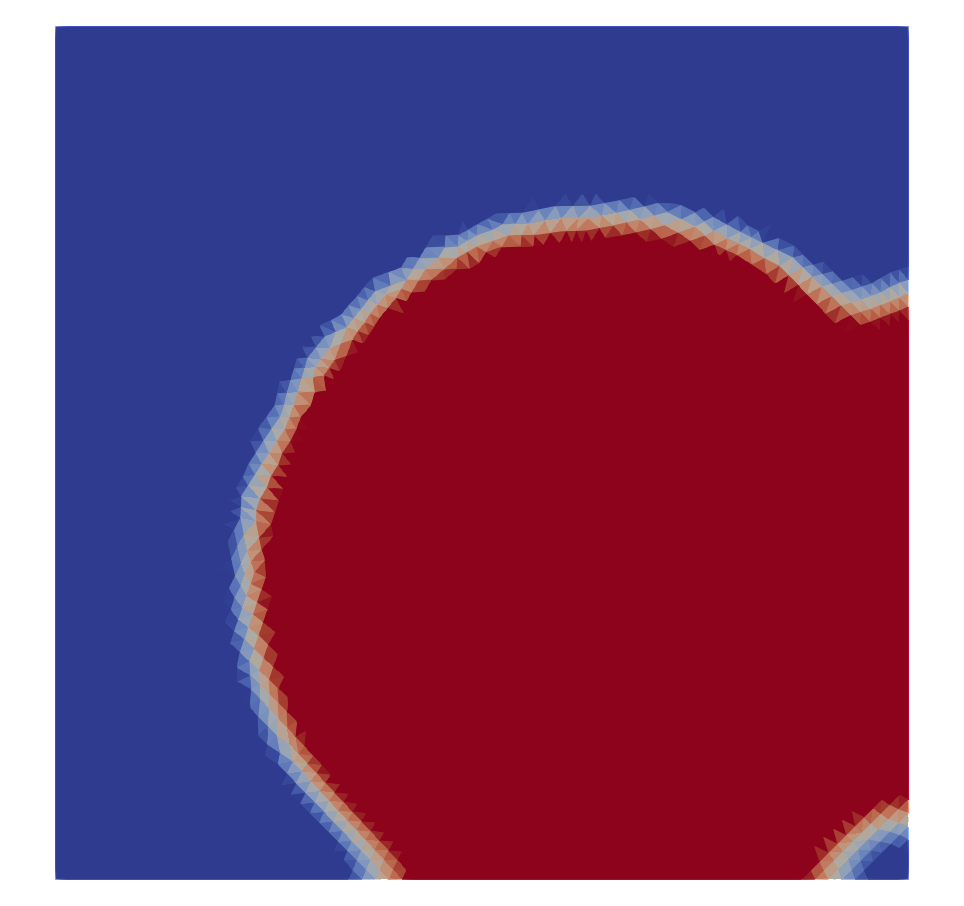} &
\includegraphics[width=0.155\textwidth]{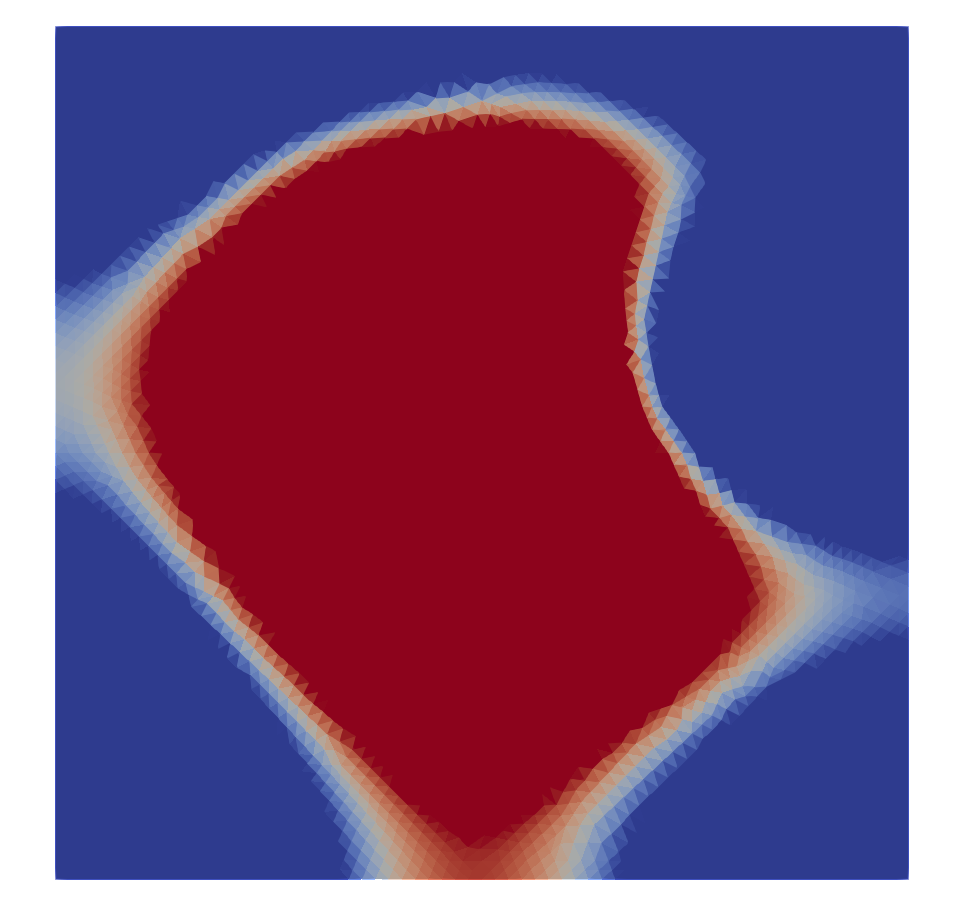} &
\includegraphics[width=0.155\textwidth]{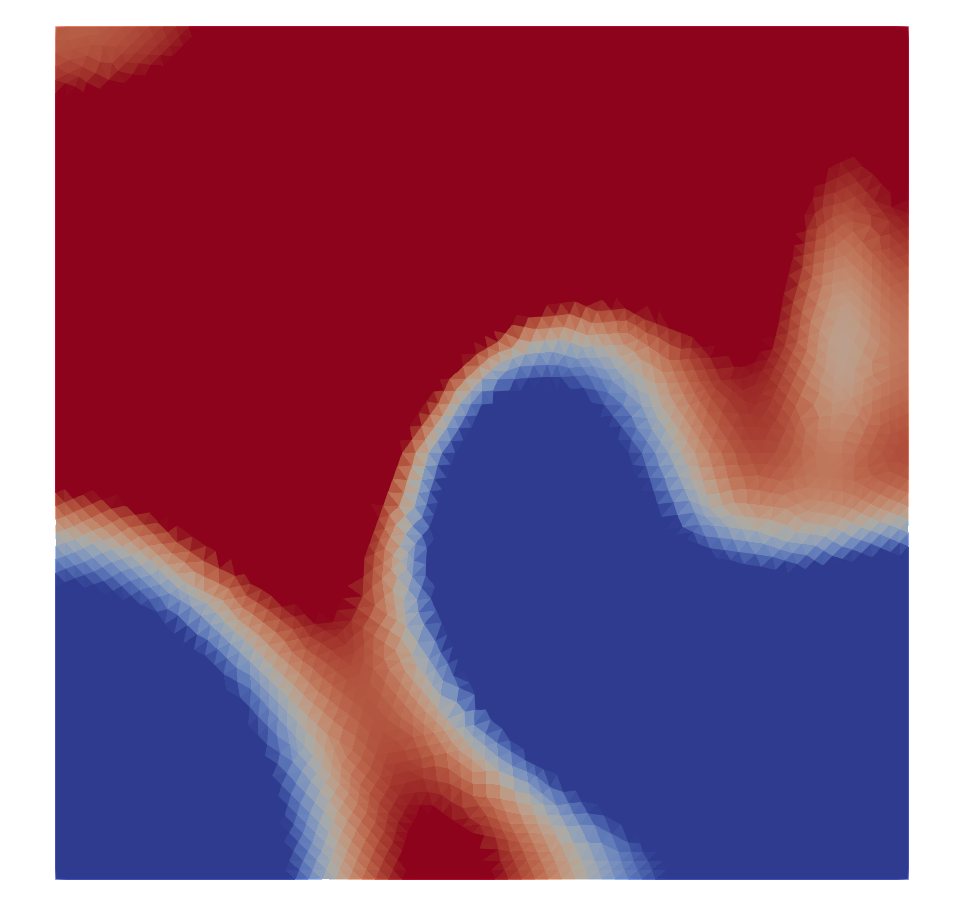} &
\includegraphics[width=0.155\textwidth]{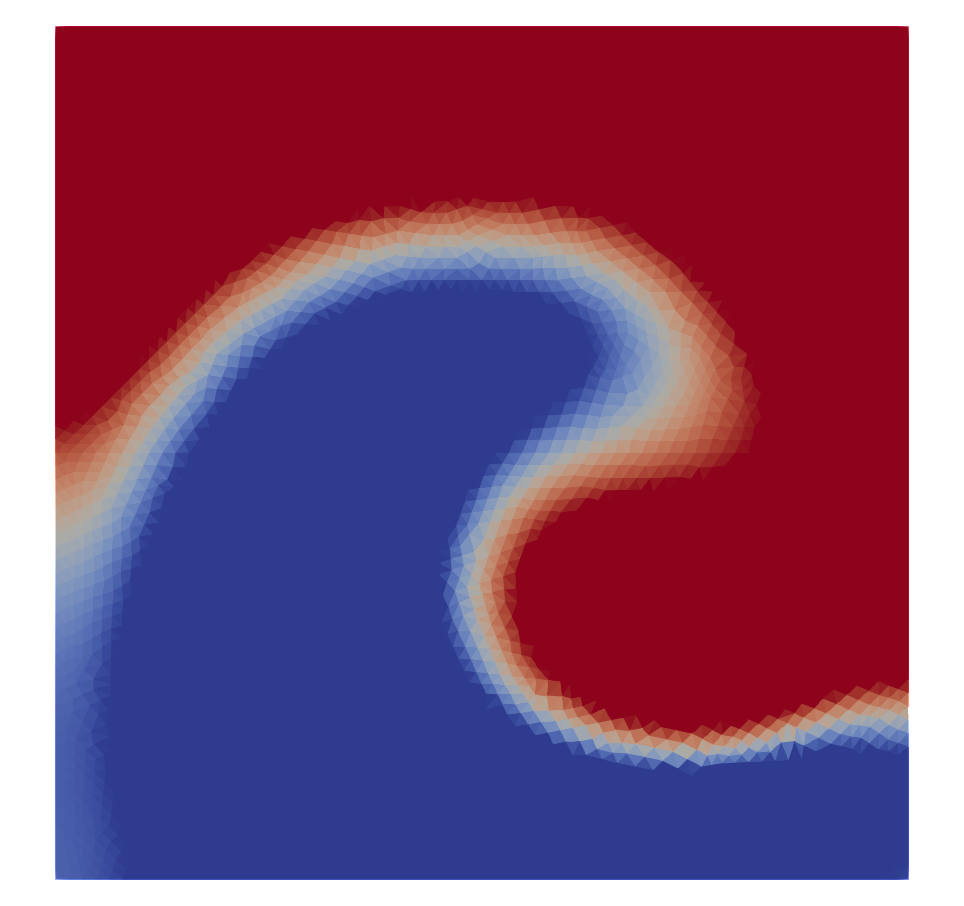} &
\includegraphics[width=0.155\textwidth]{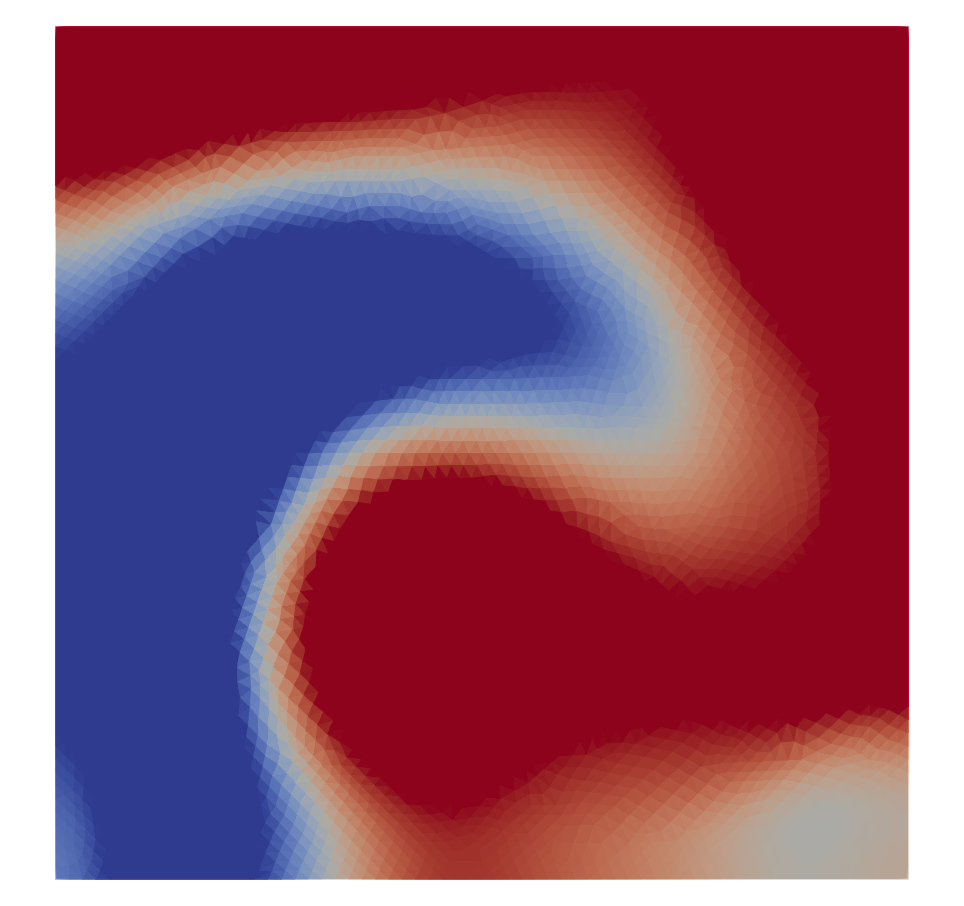} &
\includegraphics[width=0.155\textwidth]{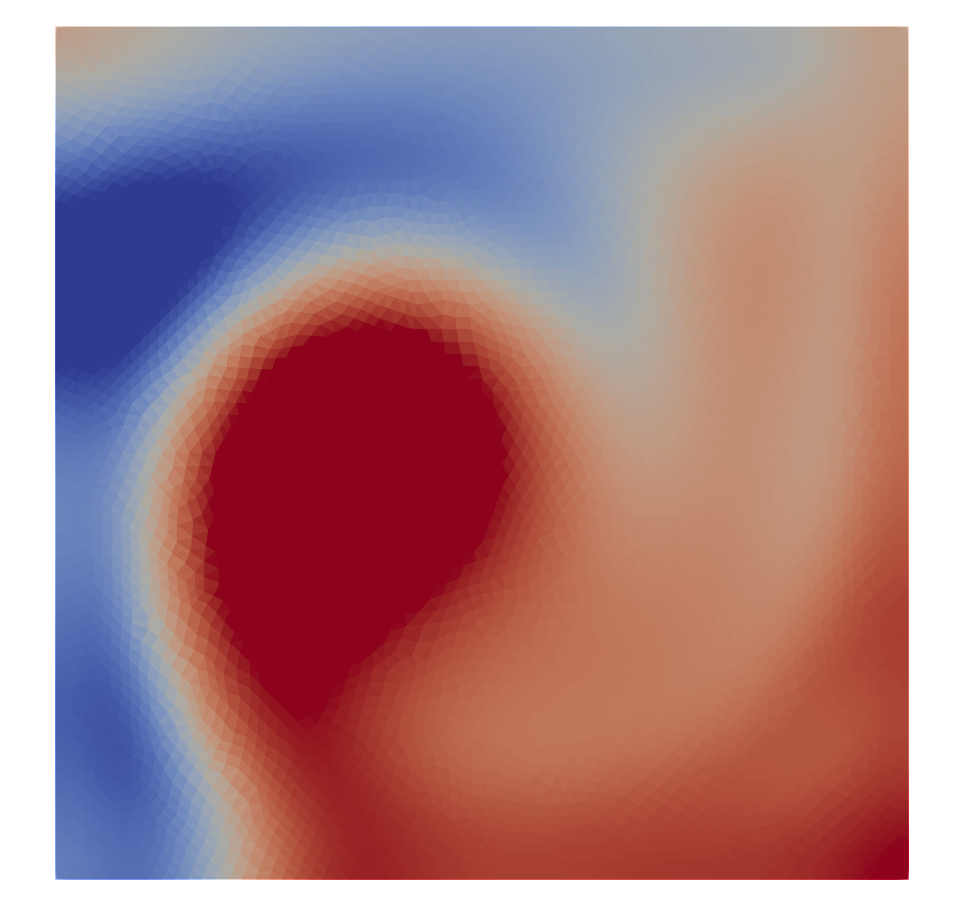}
& \includegraphics[width=0.04\textwidth]{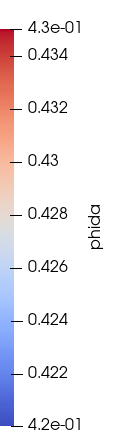}
\\[2mm]

\raisebox{0.2\height}{\makebox[0pt][c]{\rotatebox{90}{\textbf{No Nudging}}}} &
\includegraphics[width=0.155\textwidth]{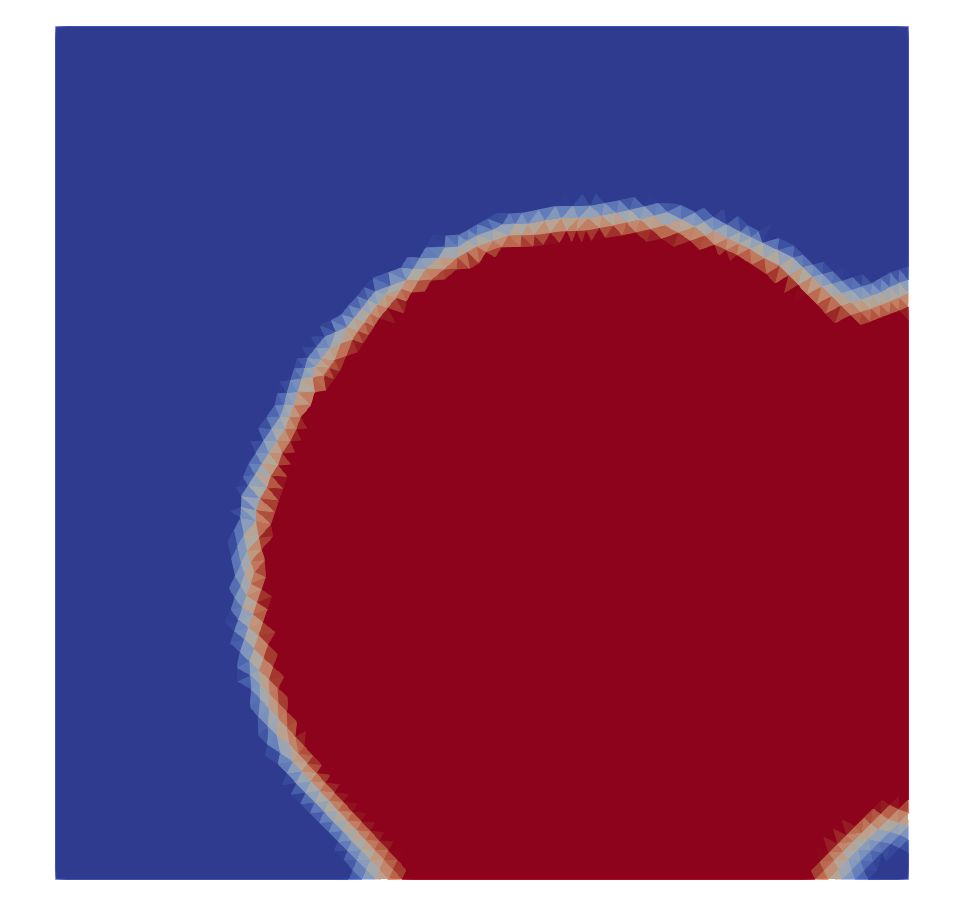} &
\includegraphics[width=0.155\textwidth]{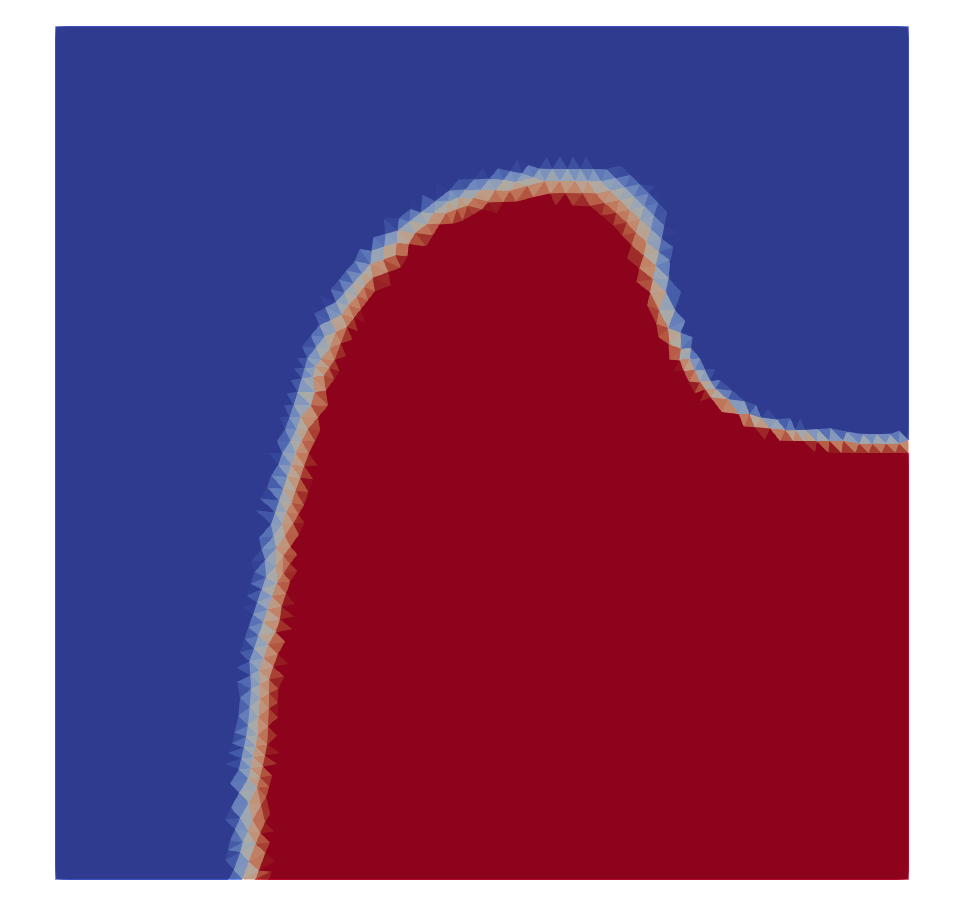} &
\includegraphics[width=0.155\textwidth]{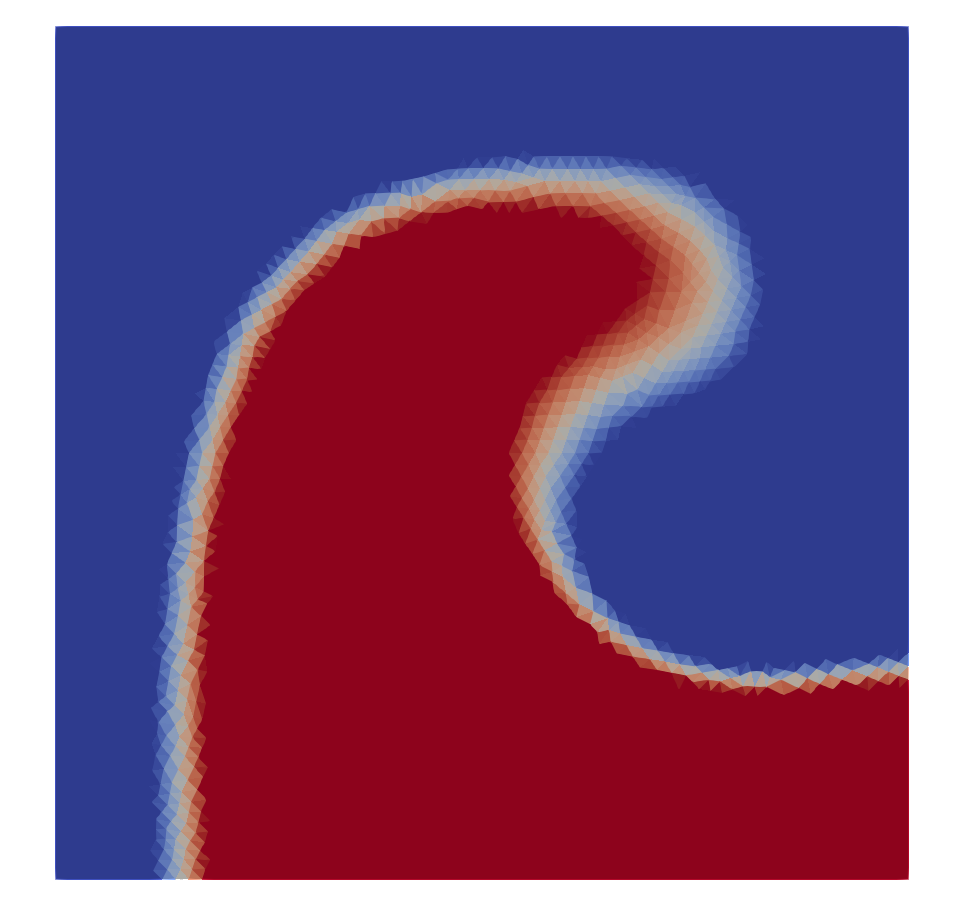} &
\includegraphics[width=0.155\textwidth]{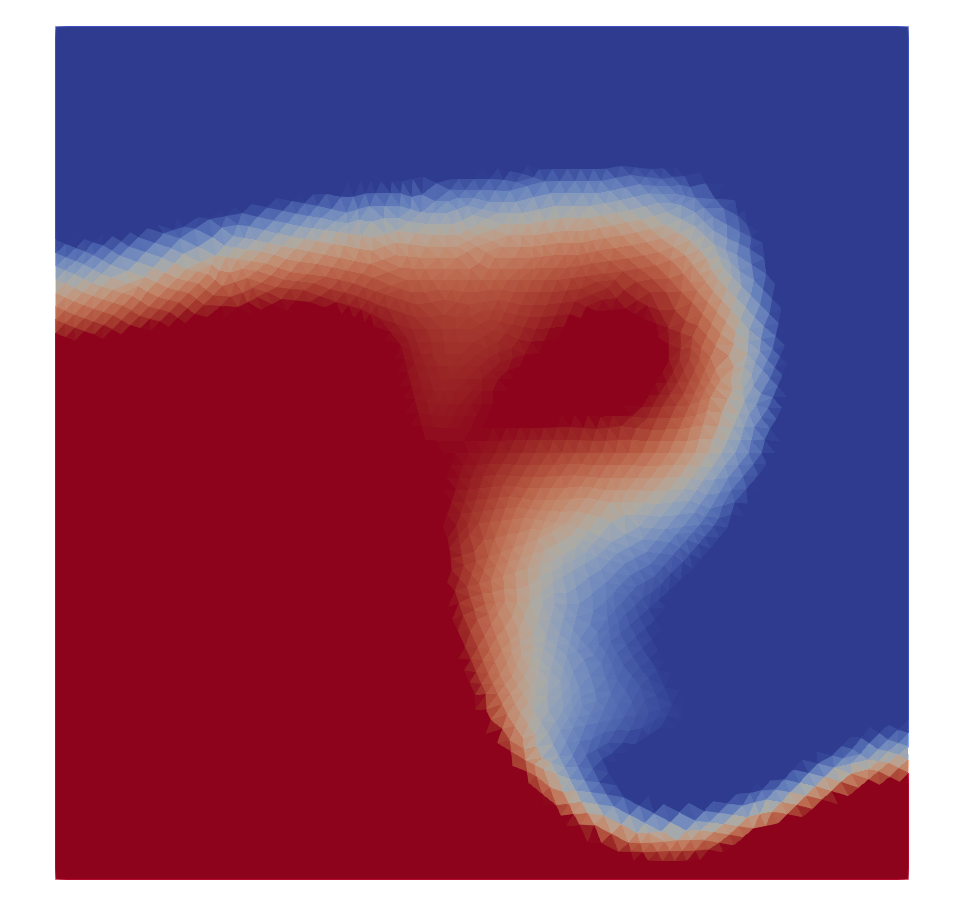} &
\includegraphics[width=0.155\textwidth]{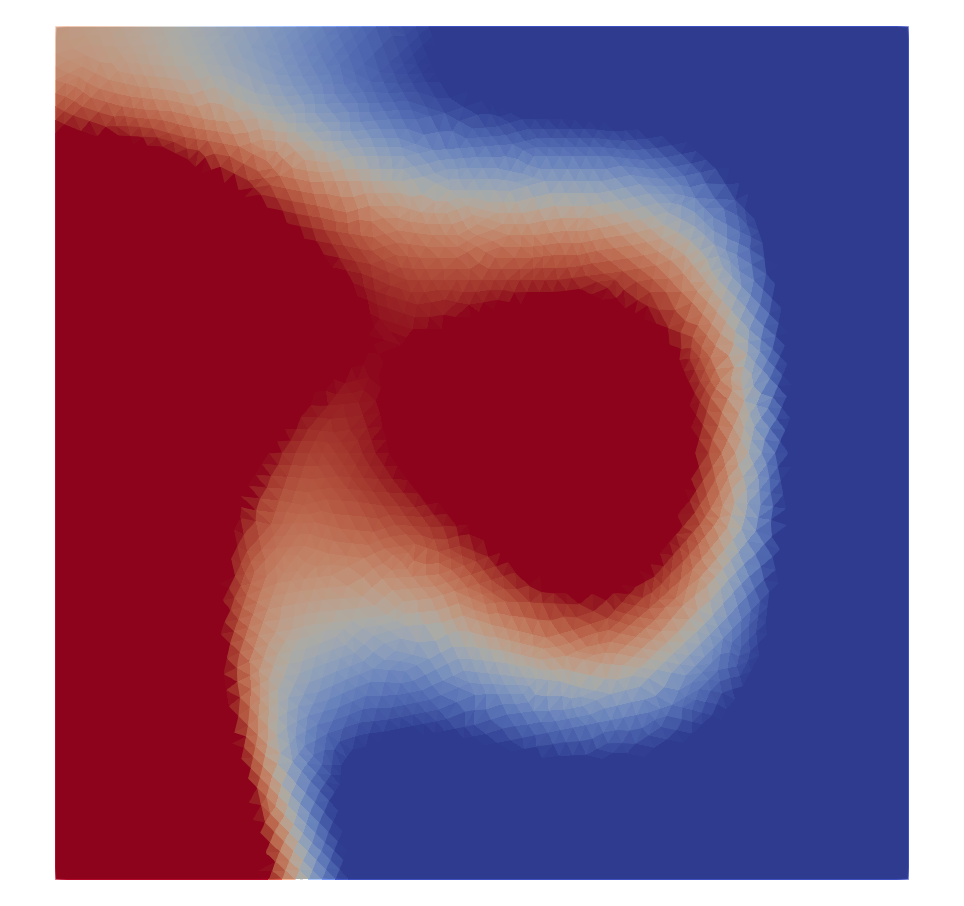} &
\includegraphics[width=0.155\textwidth]{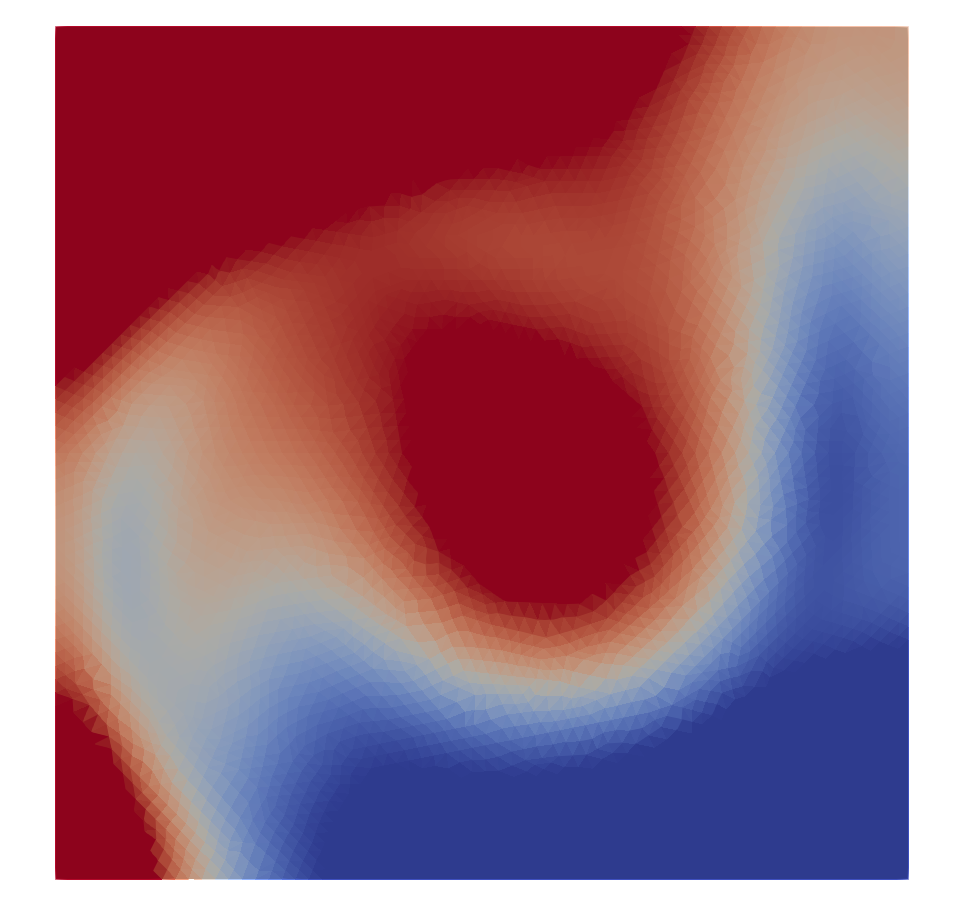}
& \includegraphics[width=0.04\textwidth]{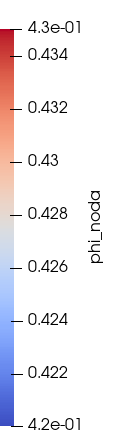}\\[1mm]

& $t=0$ & $t=2$ & $t=4$ & $t=6$ & $t=8$ & $t=10$
\end{tabular}
\caption{Order parameter snapshots. Top row: reference $\phi(t)$. Middle row: assimilated $\varphi(t)$ with
$\alpha_u=\alpha_\phi=\alpha_\psi=1$. Bottom row: $\varphi(t)$ without nudging
($\alpha_u=\alpha_\phi=\alpha_\psi=0$).}
\label{fig:shear-phi-compare}
\end{figure}

\paragraph{Test 4: validation of data assimilation}
\label{subsec:nudging-valid-test}

In this test, we check whether coarse-resolution initial information alone is sufficient to determine the future evolution.
We build two fine-grid initial conditions that are indistinguishable after applying the interpolation operator $I_H$,
but that differ on the fine scales. We then evolve both reference systems forward with the same parameters and boundary
conditions as in Test~3.

Let $\phi_{\rm naive}$ denote the base initial condition defined on the fine mesh $\mathcal T_H$. We apply the observation operator $I_H$ to $\phi_{\rm naive}$ and denote the reconstructed field by $\phi_{\rm coarse} := I_H \phi_{\rm naive}.$ The difference $\delta\phi := \phi_{\rm coarse}-\phi_{\rm naive}$ contains only fine-scale components that are lost when passing through $I_H$. We then define two fine-grid initial conditions
\[
\phi_0^{(1)} := \phi_{\rm naive}+\varepsilon\,\delta\phi,
\qquad
\phi_0^{(2)} := \phi_{\rm naive}-\varepsilon\,\delta\phi,
\]
with $\varepsilon>0$ chosen so that the fine-grid separation $\|\phi_0^{(1)}-\phi_0^{(2)}\|_{L^2(\omega)}$ is visible.
Therefore, $I_H\phi_0^{(1)}=I_H\phi_0^{(2)}$ up to machine precision, while $\phi_0^{(1)}\neq \phi_0^{(2)}$ on the fine mesh. In this test, we choose $\varepsilon=10.0$. We use a relatively coarse observation mesh $\mathcal T_h$ of size $8\times 8$ such that most information on the fine mesh are not resolved by the observation operator $I_H$, thus producing two separated initial conditions $\phi_0^{(1)}, \phi_0^{(2)}$. For the other variables, we use the same procedure as in the baseline setup, and evolve both reference systems independently.

We report four log-scale $L^2$-differences:
(i) Ref$_1$--DA$_1$ and (ii) Ref$_2$--DA$_2$, which measure synchronization of CDA to the driven reference trajectory; and
(iii) Ref$_1$--Ref$_2$ and (iv) DA$_1$--DA$_2$, which measure the separation between the two reference solutions and the
separation between the two assimilated solutions. Here, DA$_1$ denotes the CDA run driven by coarse observations from Ref$_1$,
and DA$_2$ is driven by coarse observations from Ref$_2$ (both started from the same assimilated initial guess).

The tables and plots Figures~\ref{fig:test4tot}--\ref{fig:test4u} show that Ref$_1$--DA$_1$ and Ref$_2$--DA$_2$ decay to near machine precision for $\phi$, $\vec\psi$,
$\vec u$, and the combined error, while Ref$_1$--Ref$_2$ remains nonzero over the time window and DA$_1$--DA$_2$ overlaps
with Ref$_1$--Ref$_2$. Thus, even though the two reference runs have the same coarse-grid initial data, their fine-grid
evolutions separate, and CDA tracks whichever reference solution supplies the observations.

\begin{figure}[H]
	\centering
	\subfloat[\textbf{Last six values (total).}]{
		\begin{tabular}{lrrrr}
\toprule
$t$ & Ref1-DA1 & Ref2-DA2 & Ref1-Ref2 & DA1-DA2 \\
\midrule
99.95 & 1.61E-05 & 1.75E-05 & 1.61E-02 & 1.61E-02 \\
99.96 & 1.61E-05 & 1.75E-05 & 1.61E-02 & 1.61E-02 \\
99.97 & 1.61E-05 & 1.75E-05 & 1.61E-02 & 1.61E-02 \\
99.98 & 1.61E-05 & 1.75E-05 & 1.61E-02 & 1.61E-02 \\
99.99 & 1.61E-05 & 1.75E-05 & 1.61E-02 & 1.61E-02 \\
100.00 & 1.61E-05 & 1.75E-05 & 1.61E-02 & 1.61E-02 \\
\bottomrule
\end{tabular}

	}
	\hspace{0.0cm}
	\begin{subfigure}[t]{0.43\linewidth}
		\includegraphics[width=\linewidth]{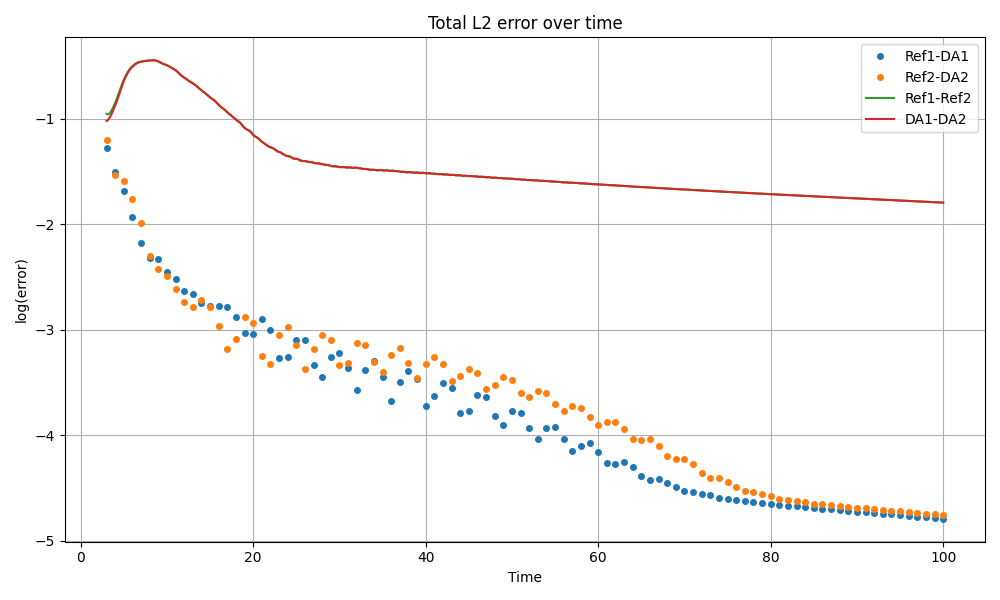}
		\subcaption{Total error: Ref1--DA1 and Ref2--DA2 decay to near machine precision, while Ref1--Ref2 and DA1--DA2 overlap and plateau.}
	\end{subfigure}
	\caption{Total error comparison showing CDA tracks the driven reference solution.}
	\label{fig:test4tot}
\end{figure}

\begin{figure}[H]
	\centering
	\subfloat[\textbf{Last six values ($\varphi$).}]{
		\begin{tabular}{lrrrr}
\toprule
$t$ & Ref1-DA1 & Ref2-DA2 & Ref1-Ref2 & DA1-DA2 \\
\midrule
99.95 & 4.89E-10 & 9.37E-10 & 2.33E-04 & 2.33E-04 \\
99.96 & 4.89E-10 & 9.25E-10 & 2.33E-04 & 2.33E-04 \\
99.97 & 4.89E-10 & 9.14E-10 & 2.33E-04 & 2.33E-04 \\
99.98 & 4.89E-10 & 9.04E-10 & 2.33E-04 & 2.33E-04 \\
99.99 & 4.89E-10 & 8.95E-10 & 2.33E-04 & 2.33E-04 \\
100.00 & 4.89E-10 & 8.88E-10 & 2.33E-04 & 2.33E-04 \\
\bottomrule
\end{tabular}

	}
	\hspace{0.0cm}
	\begin{subfigure}[t]{0.43\linewidth}
		\includegraphics[width=\linewidth]{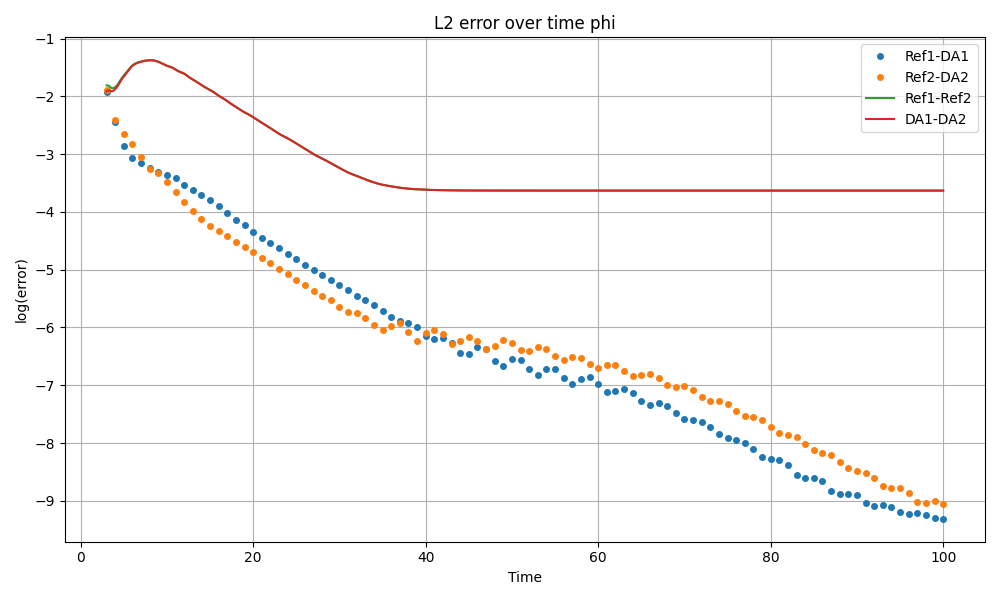}
		\subcaption{$\varphi$ error: Ref1--DA1 and Ref2--DA2 decay, while Ref1--Ref2 and DA1--DA2 remain matched and plateau.}
	\end{subfigure}
	\caption{$\phi$ error comparison.}
	\label{fig:test4phi}
\end{figure}

\begin{figure}[H]
	\centering
	\subfloat[\textbf{Last six values ($\vec\psi$).}]{
		\begin{tabular}{lrrrr}
\toprule
$t$ & Ref1-DA1 & Ref2-DA2 & Ref1-Ref2 & DA1-DA2 \\
\midrule
99.95 & 1.17E-12 & 2.79E-12 & 7.04E-03 & 7.04E-03 \\
99.96 & 1.17E-12 & 2.79E-12 & 7.04E-03 & 7.04E-03 \\
99.97 & 1.16E-12 & 2.78E-12 & 7.04E-03 & 7.04E-03 \\
99.98 & 1.16E-12 & 2.78E-12 & 7.04E-03 & 7.04E-03 \\
99.99 & 1.16E-12 & 2.77E-12 & 7.04E-03 & 7.04E-03 \\
100.00 & 1.16E-12 & 2.77E-12 & 7.04E-03 & 7.04E-03 \\
\bottomrule
\end{tabular}

	}
	\hspace{0.0cm}
	\begin{subfigure}[t]{0.43\linewidth}
		\includegraphics[width=\linewidth]{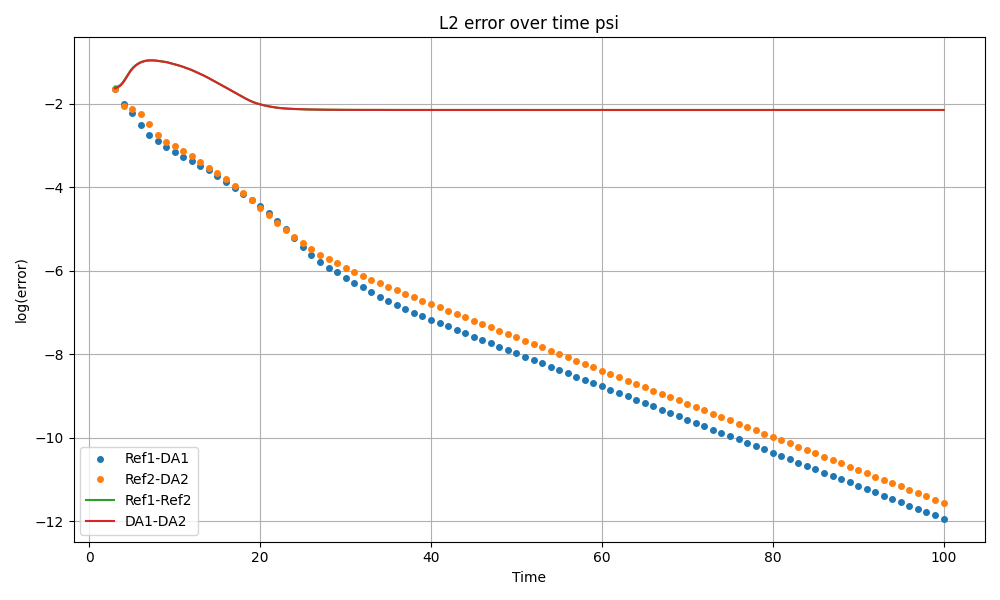}
		\subcaption{$\vec\psi$ error: Ref1--DA1 and Ref2--DA2 decay, while Ref1--Ref2 and DA1--DA2 overlap and plateau.}
	\end{subfigure}
	\caption{$\vec\psi$ error comparison.}
	\label{fig:test4psi}
\end{figure}

\begin{figure}[H]
	\centering
	\subfloat[\textbf{Last six values ($\vec u$).}]{
		\begin{tabular}{lrrrr}
\toprule
$t$ & Ref1-DA1 & Ref2-DA2 & Ref1-Ref2 & DA1-DA2 \\
\midrule
99.95 & 1.61E-05 & 1.75E-05 & 8.81E-03 & 8.80E-03 \\
99.96 & 1.61E-05 & 1.75E-05 & 8.80E-03 & 8.80E-03 \\
99.97 & 1.61E-05 & 1.75E-05 & 8.80E-03 & 8.80E-03 \\
99.98 & 1.61E-05 & 1.75E-05 & 8.80E-03 & 8.80E-03 \\
99.99 & 1.61E-05 & 1.75E-05 & 8.80E-03 & 8.80E-03 \\
100.00 & 1.61E-05 & 1.75E-05 & 8.80E-03 & 8.80E-03 \\
\bottomrule
\end{tabular}

	}
	\hspace{0.0cm}
	\begin{subfigure}[t]{0.43\linewidth}
		\includegraphics[width=\linewidth]{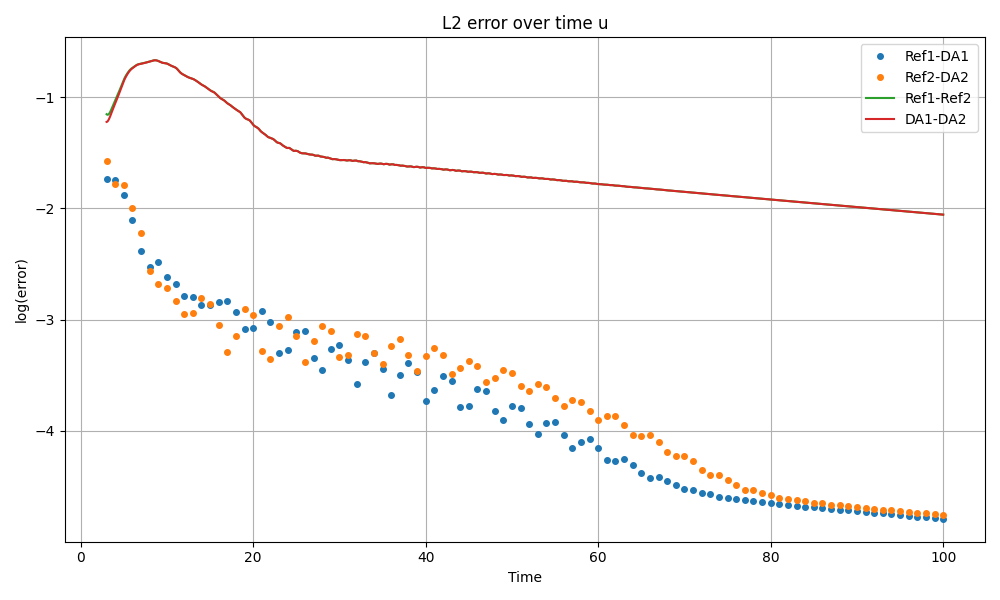}
		\subcaption{$\vec u$ error: Ref1--DA1 and Ref2--DA2 decay, while Ref1--Ref2 and DA1--DA2 remain matched and plateau.}
	\end{subfigure}
	\caption{Velocity $\vec u$ error comparison.}
	\label{fig:test4u}
\end{figure}

\begin{figure}[H]
\centering
\setlength{\tabcolsep}{1pt}
\renewcommand{\arraystretch}{0}
\scriptsize
\begin{tabular}{@{}c*{7}{c}@{}}

\raisebox{0.4\height}{\makebox[0pt][c]{\rotatebox{90}{\textbf{Ref1-Ref2}}}} &
\includegraphics[width=0.155\textwidth]{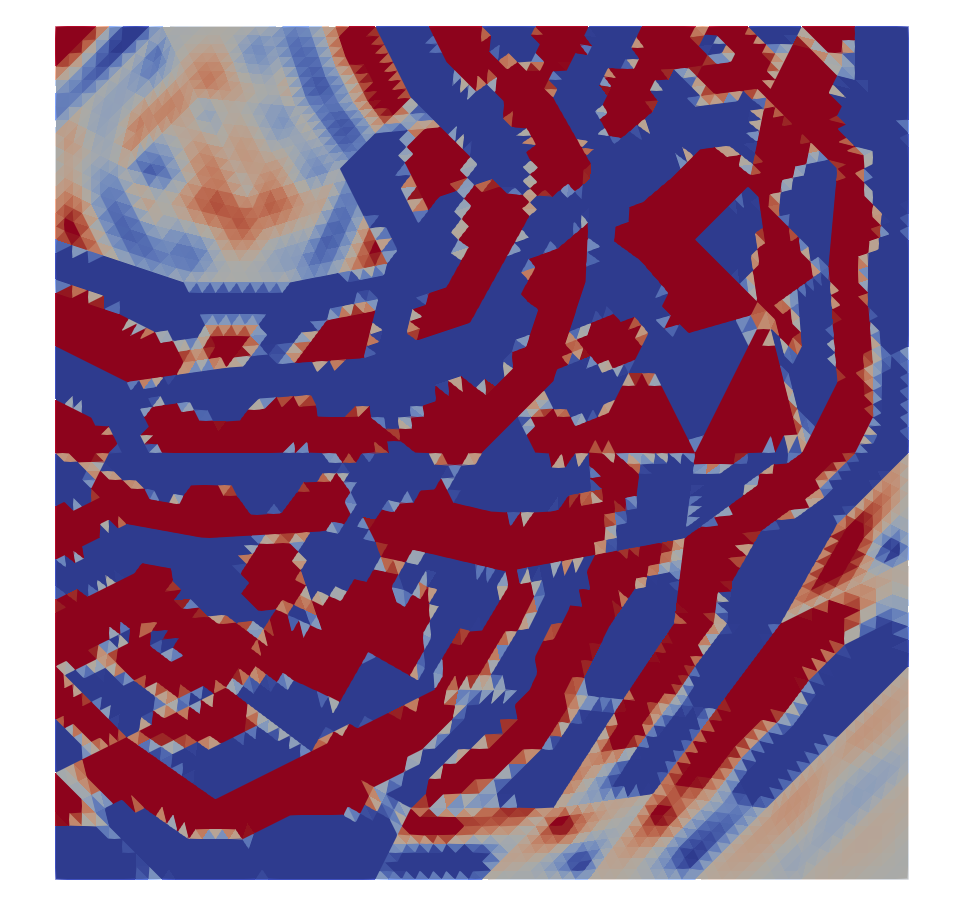} &
\includegraphics[width=0.155\textwidth]{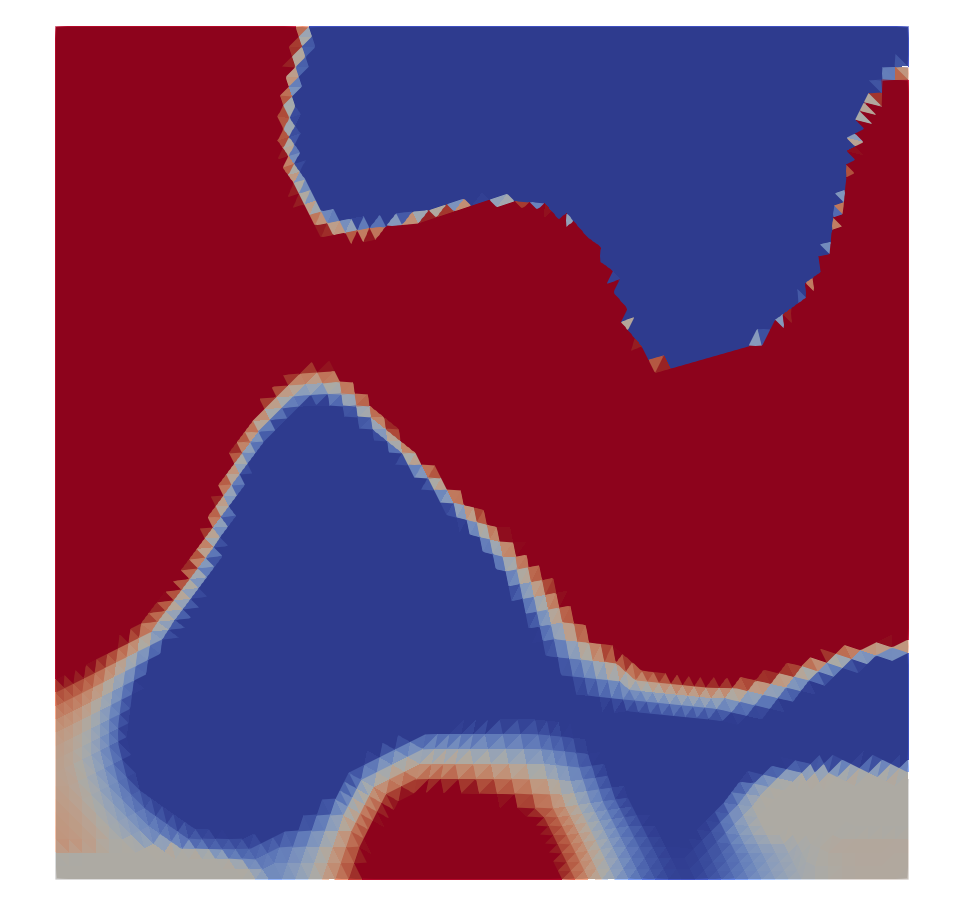} &
\includegraphics[width=0.155\textwidth]{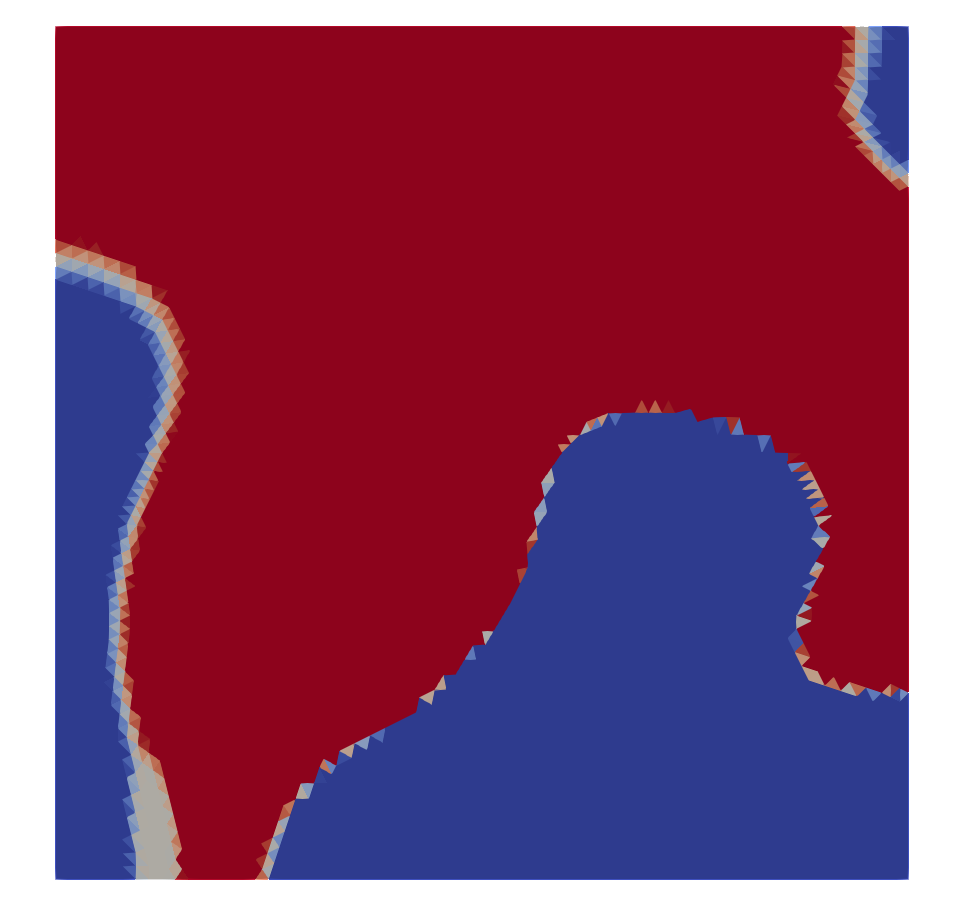} &
\includegraphics[width=0.155\textwidth]{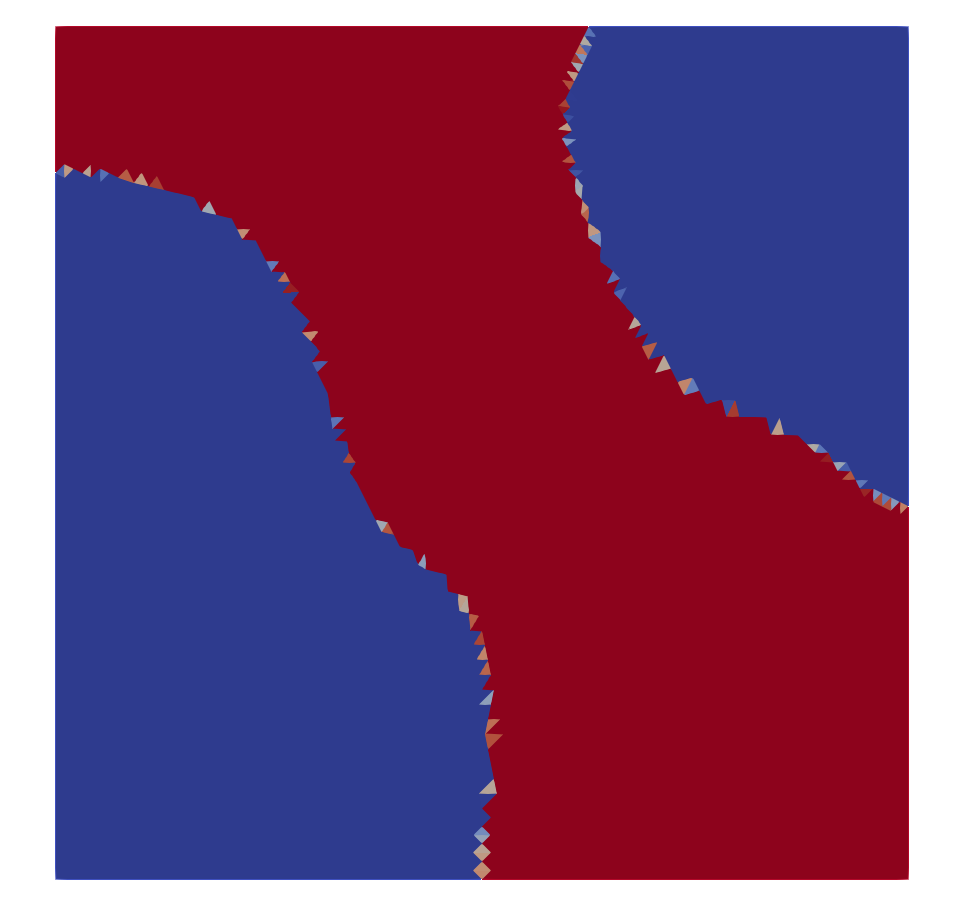} &
\includegraphics[width=0.155\textwidth]{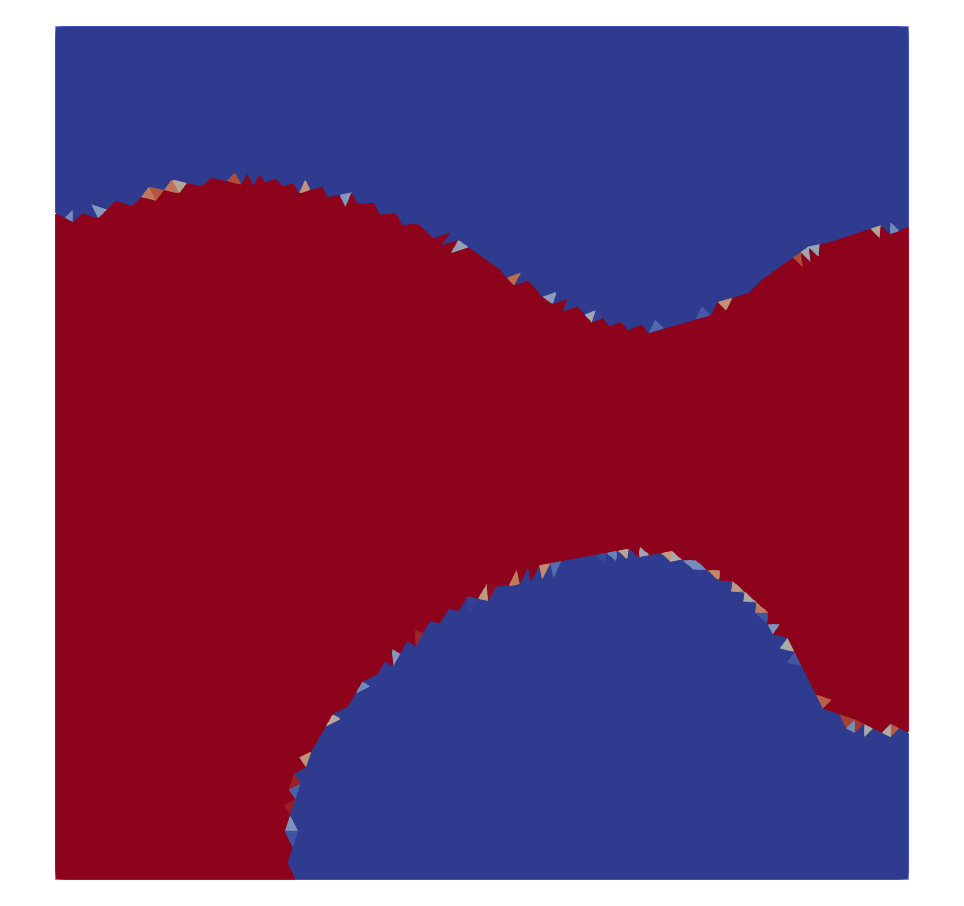} &
\includegraphics[width=0.155\textwidth]{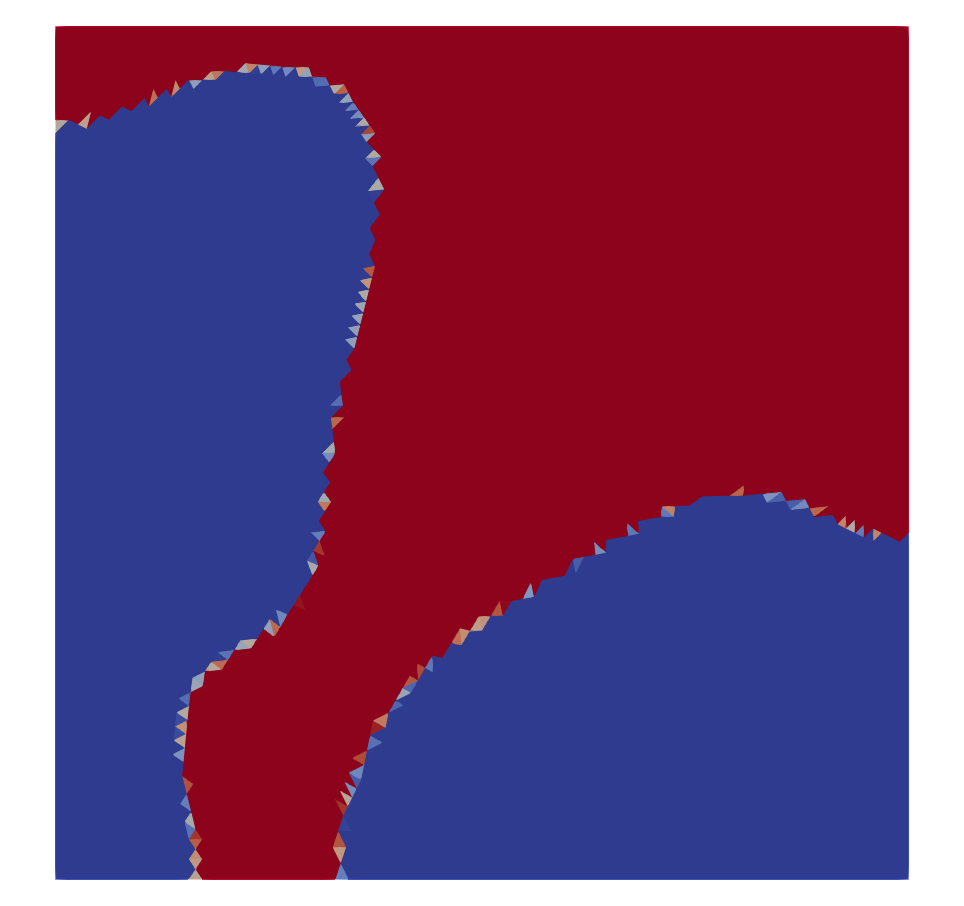} & \includegraphics[width=0.04\textwidth]{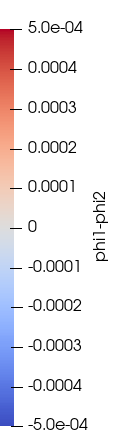}
\\[2mm]

\raisebox{0.4\height}{\makebox[0pt][c]{\rotatebox{90}{\textbf{DA1-DA2}}}} &
\includegraphics[width=0.155\textwidth]{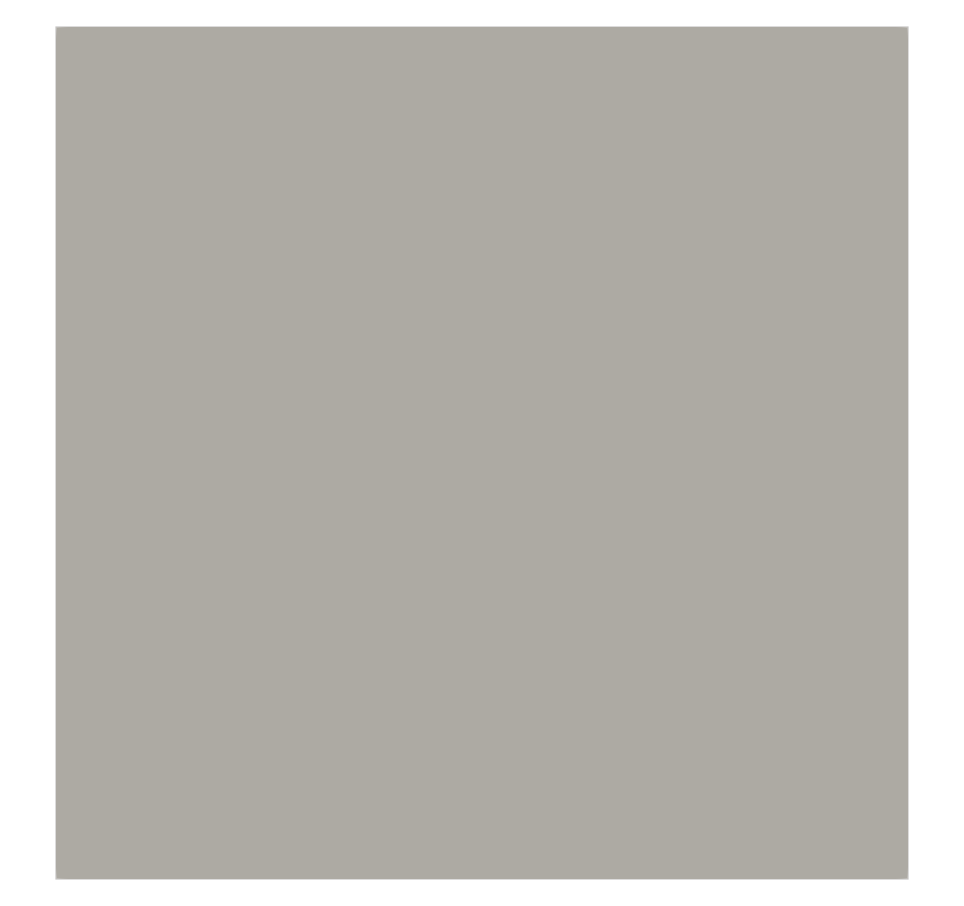} &
\includegraphics[width=0.155\textwidth]{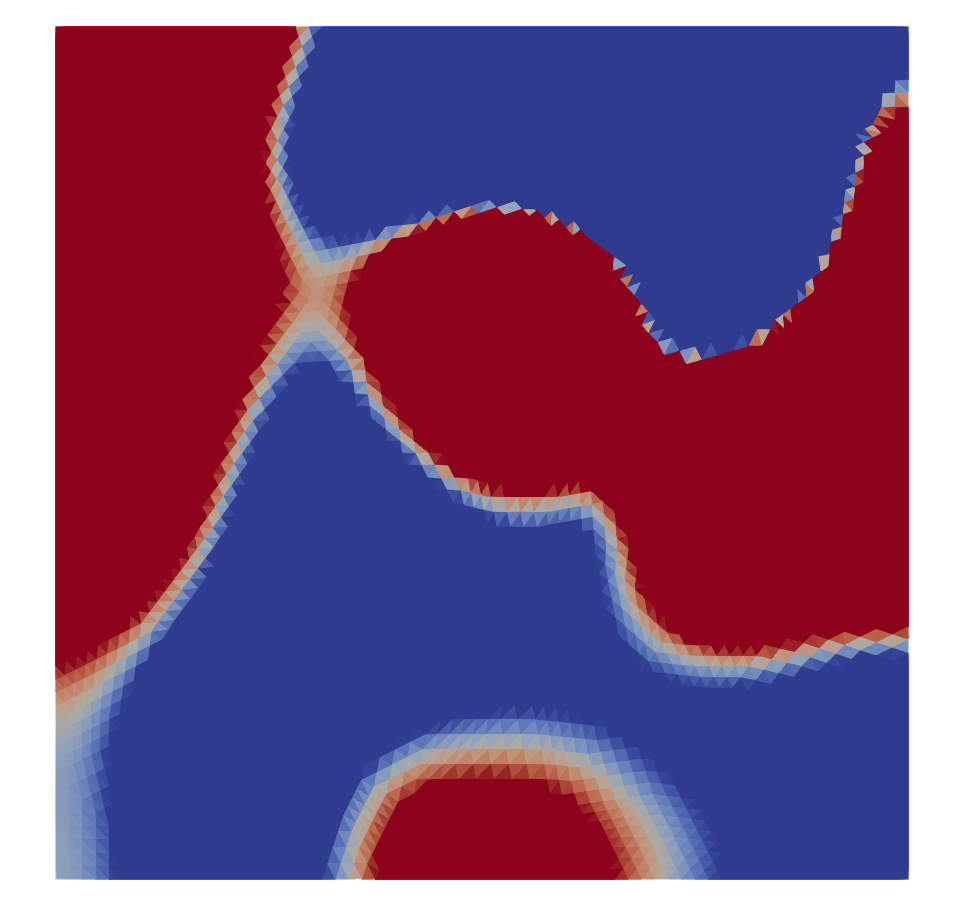} &
\includegraphics[width=0.155\textwidth]{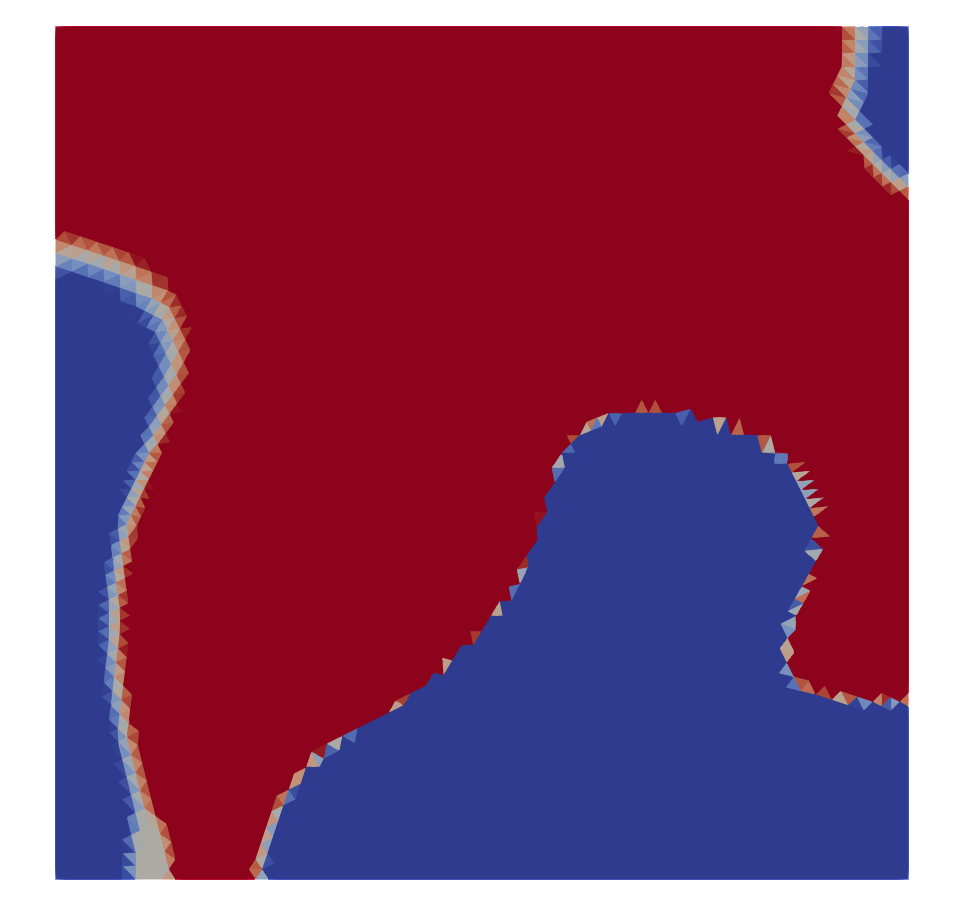} &
\includegraphics[width=0.155\textwidth]{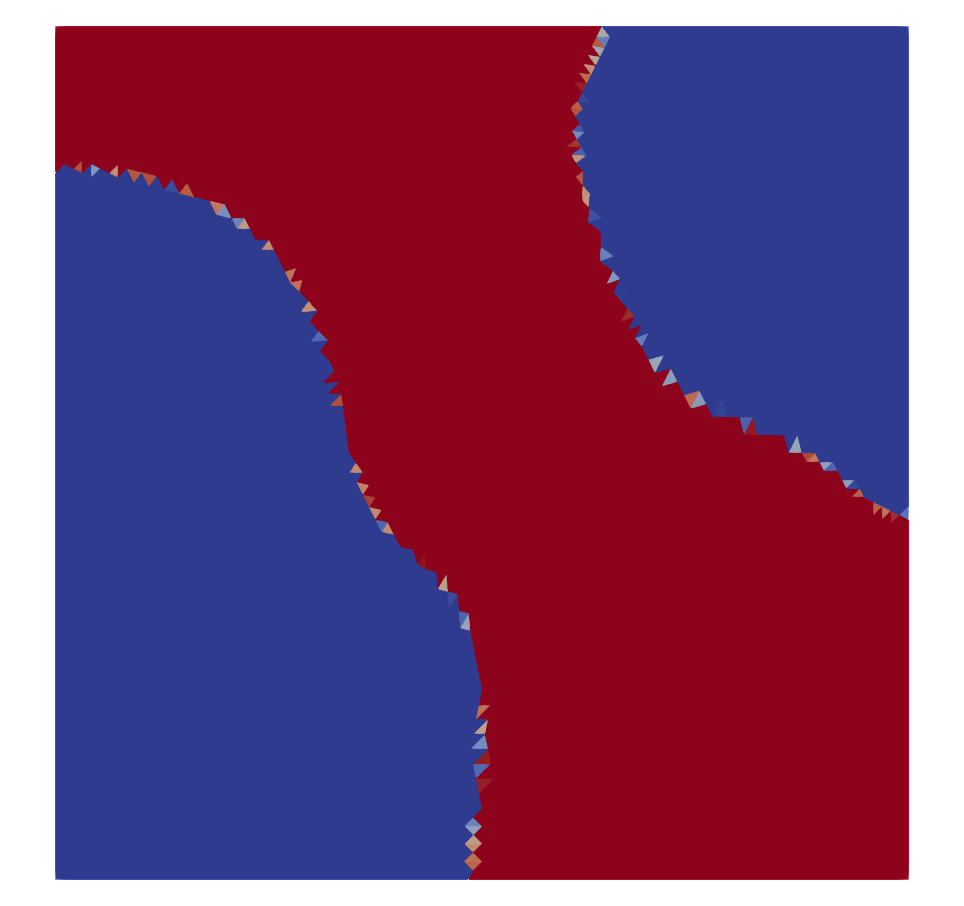} &
\includegraphics[width=0.155\textwidth]{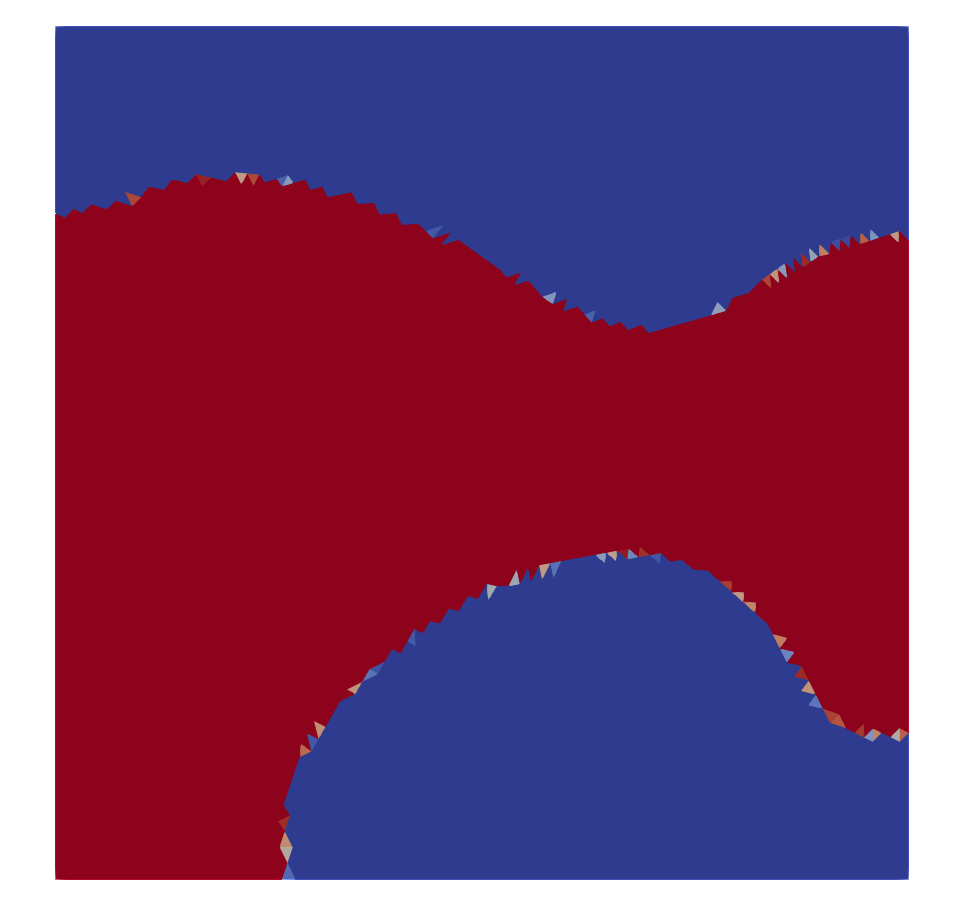} &
\includegraphics[width=0.155\textwidth]{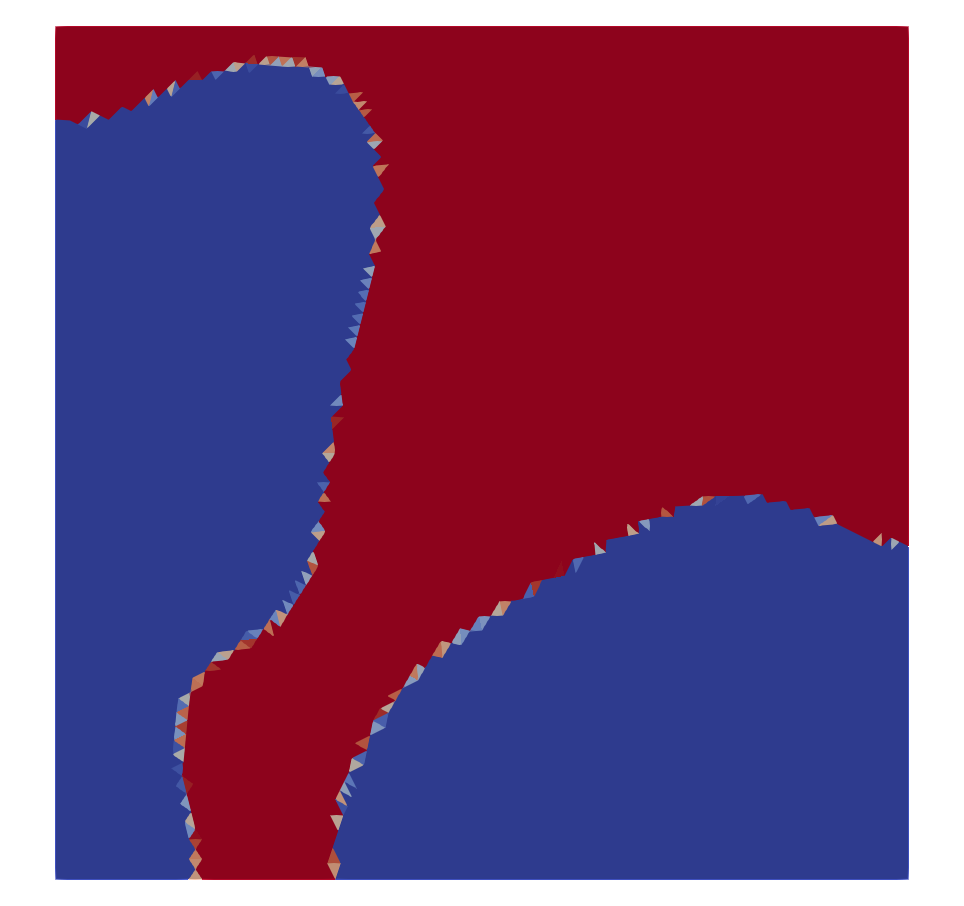}
& \includegraphics[width=0.04\textwidth]{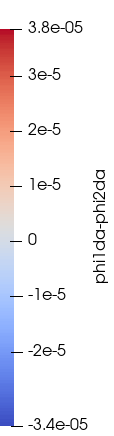}\\[2mm]

& $t=0$ & $t=2$ & $t=4$ & $t=6$ & $t=8$ & $t=10$
\end{tabular}
\caption{Order parameter difference snapshots. Top: reference solution $\phi^{(1)}-\phi^{(2)}$ (Ref1--Ref2). Bottom: nudging solution
$\varphi^{(1)}-\varphi^{(2)}$ (DA1--DA2), where DA1 and DA2 start from the same assimilated initial guess but are driven by coarse observations from Ref1 and Ref2, respectively.}
\label{fig:shear-phi-diff-compare}
\end{figure}

\paragraph{Test 5: effects of nudging coefficient}
\label{subsec:nudging-coef-test}

In this test, we investigate how the synchronization rate depends on the strength of the feedback.
We repeat the setup of Test~3 (same domain, discretization, observation operator $I_H$, parameters, and mismatched
initial data), but vary the nudging parameters. For simplicity, we use a single coefficient
\[
\alpha_u=\alpha_\phi=\alpha_\psi=:\alpha,
\qquad \alpha\in\{1,\;10^{-1},\;10^{-2}\}.
\]
We plot the errors on a logarithmic scale versus time.

Figure~\ref{fig:alpha-errors} shows a clear dependence on $\alpha$.
For $\alpha=1$, all primary variables synchronize rapidly: the curves display a steep initial drop and then
enter a slower decay regime, consistent with reaching an observation/discretization-limited accuracy.
For $\alpha=0.1$, the decay is still robust but noticeably slower; over most of the time interval, the trajectories
are close to straight lines on the log scale, indicating approximately exponential decay with a smaller rate.
For $\alpha=0.01$, the feedback is too weak on the time horizon $[0,100]$: the errors decrease only mildly and full synchronization is not observed within the final time.

We also report the pressure iterate difference in Figure~\ref{fig:alpha-errors} (top right). Since pressure is only
defined up to an additive constant and is obtained here via a pressure-correction update, its curve can remain highly
oscillatory even when the primary variables synchronize. Accordingly, the most reliable synchronization indicators are
the velocity and phase-field errors.

\begin{figure}[H]
\centering
\includegraphics[width=0.48\textwidth]{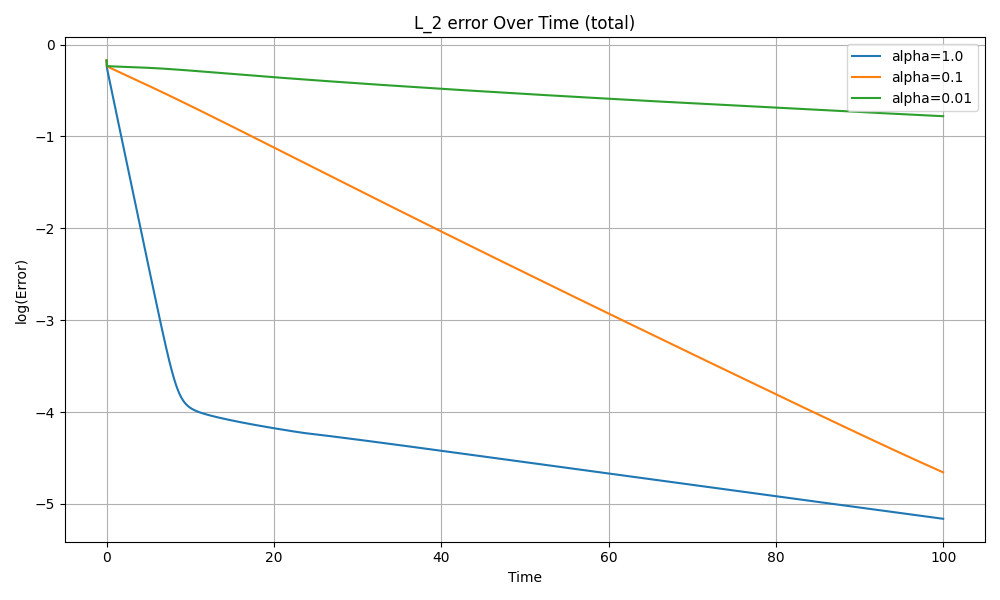}\hfill
\includegraphics[width=0.48\textwidth]{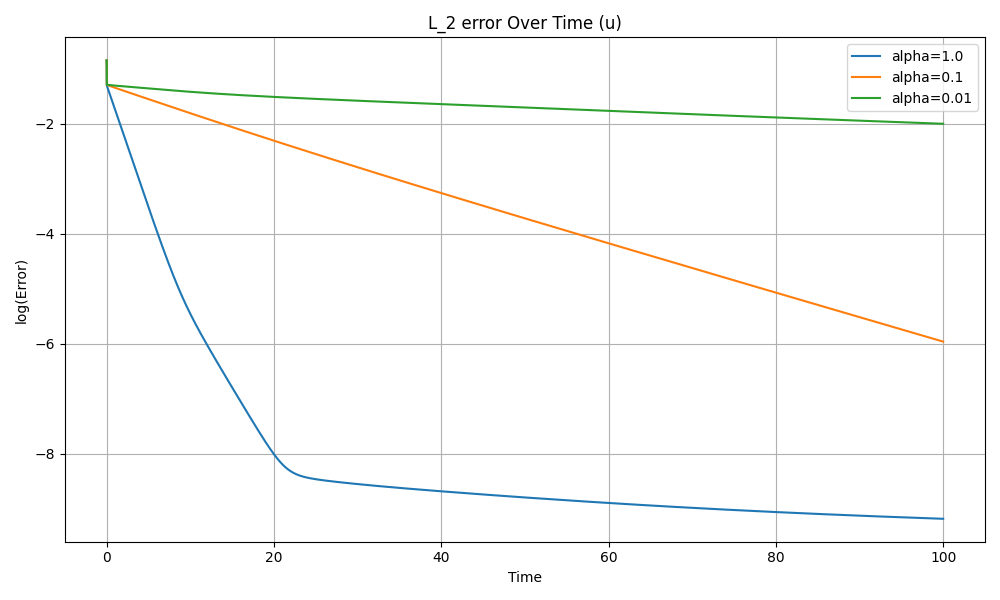}\hfill\\[0.6ex]
\includegraphics[width=0.48\textwidth]{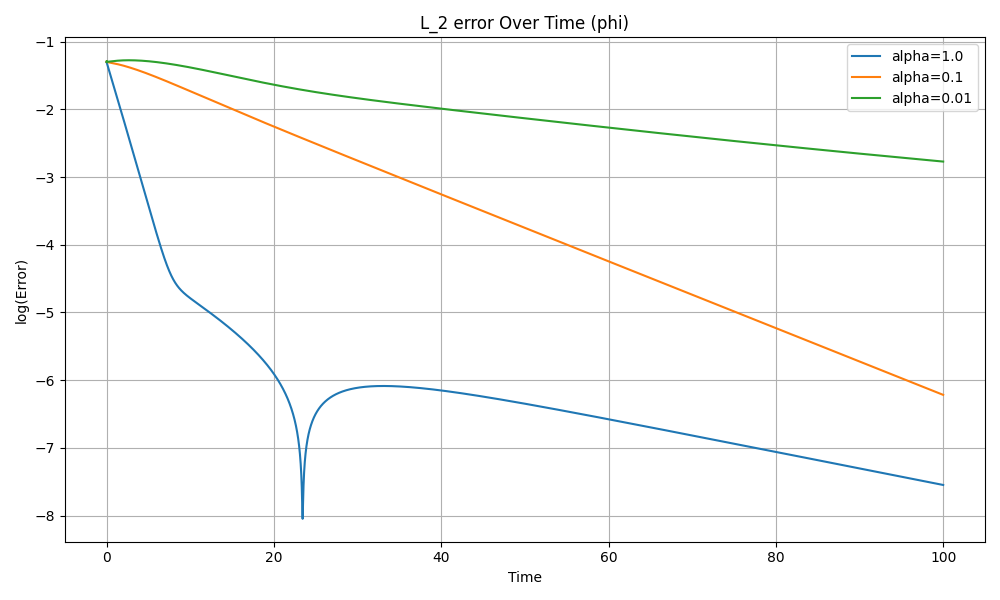}\hfill
\includegraphics[width=0.48\textwidth]{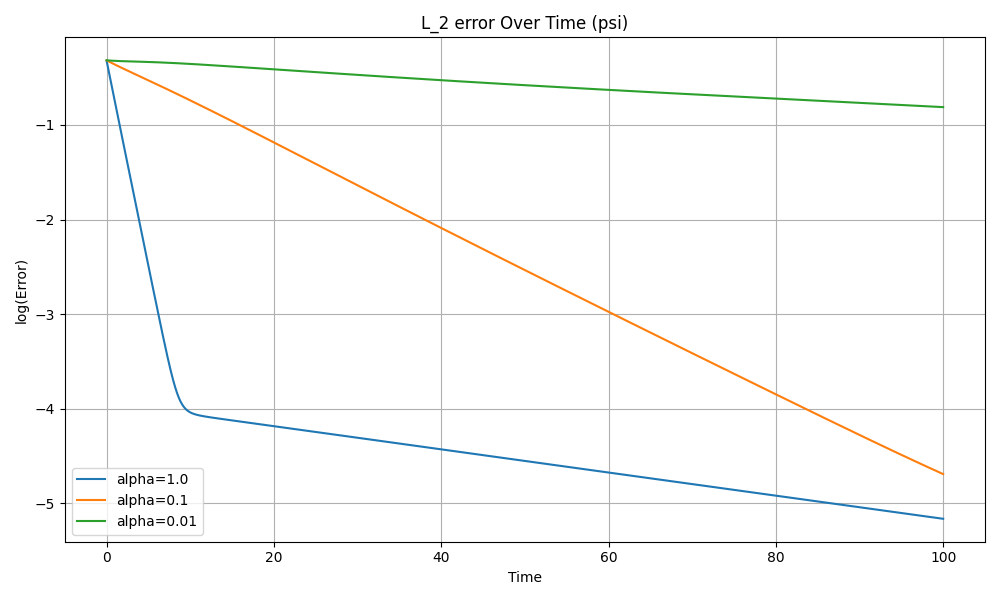}
\caption{Log-scale errors for $\alpha_u=\alpha_\phi=\alpha_\psi=\alpha\in\{1,0.1,0.01\}$.. Top left: combined error.Top right: velocity error. 
Bottom row: componentwise errors in $\phi$ and $\vec\psi$.}
\label{fig:alpha-errors}
\end{figure}

\paragraph{Test 6: effects of interpolation mesh size}
\label{subsec:interp-mesh-test}

In this test, we examine the dependence of synchronization on the resolution of the observation/interpolation operator
$I_H$. We keep the physical parameters, time step, and nudging strengths fixed with
\[
\alpha_u=\alpha_\phi=\alpha_\psi=1,
\]
and vary only the observation mesh size $H$. We consider three observation resolutions (labeled $32\times 32$,
$16\times 16$, and $8\times 8$), while maintaining the same overall simulation setup and mismatched initial data as in
the preceding tests.

Figure~\ref{fig:H-errors} shows a clear monotone trend: finer observations lead to faster and stronger synchronization.
The most pronounced sensitivity to $H$ occurs in the velocity component. The velocity error exhibits a steep initial decay and reaches a very small plateau by approximately $t\approx 20$ on the finest observation mesh ($32\times 32$). For $16\times 16$ observations, the same qualitative behavior persists but with a noticeably reduced
decay rate. For the coarsest $8\times 8$ observations, the velocity error decreases much more slowly and remains several
orders of magnitude larger over the entire interval. This behavior is consistent with the fact that coarser observations
cannot constrain finer-scale features of the flow, thereby limiting the achievable synchronization accuracy.

In contrast, the phase-field errors in $\phi$ and $\vec\psi$ are less sensitive to $H$ on the time horizon considered,
and the combined error is largely governed by the late-time behavior of the auxiliary field $\vec\psi$. The energy
diagnostics further support this picture: the elastic, mixing, and total energies remain nearly indistinguishable across
all observation resolutions, while the kinetic energy exhibits the clearest $H$-dependence, with coarser observations
associated with a larger residual kinetic component.

\begin{figure}[H]
\centering
\includegraphics[width=0.48\textwidth]{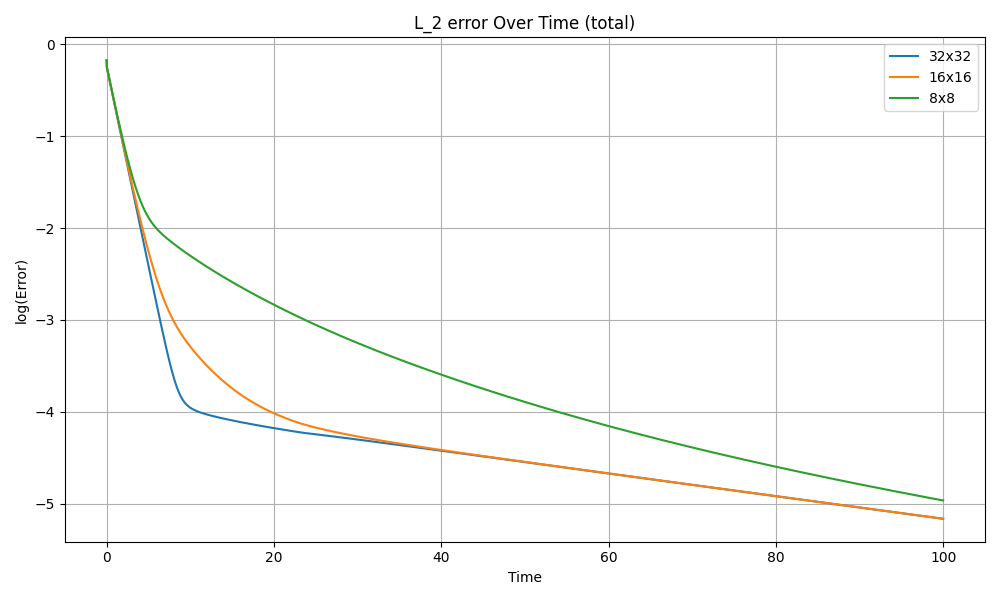}\hfill
\includegraphics[width=0.48\textwidth]{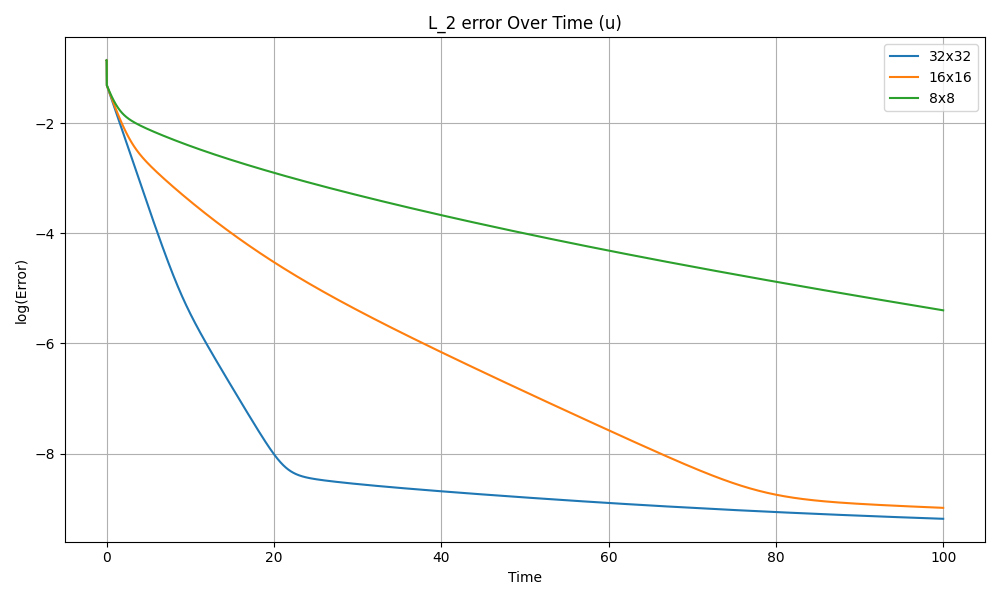}\\[0.6ex]
\includegraphics[width=0.48\textwidth]{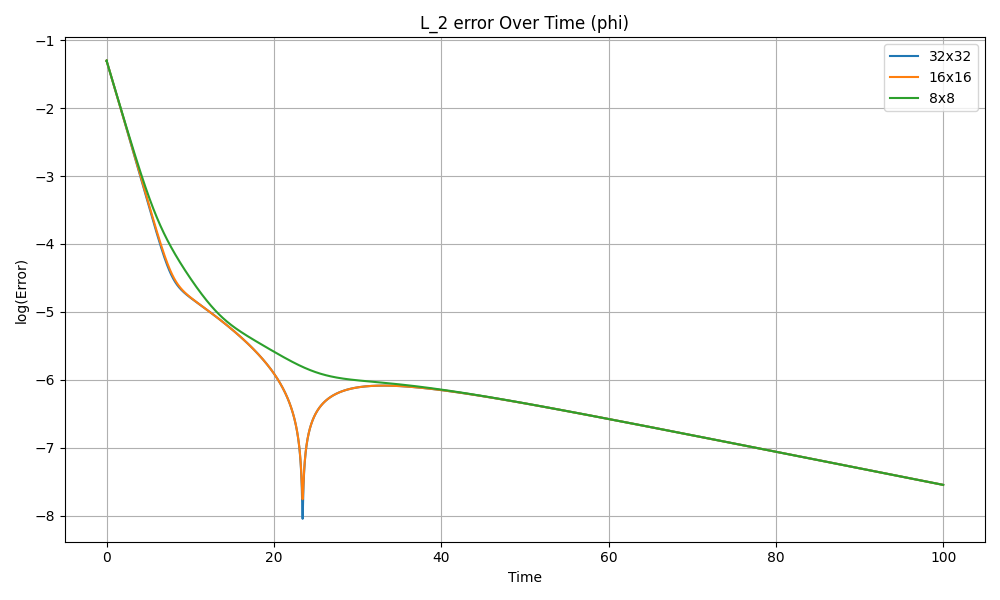}\hfill
\includegraphics[width=0.48\textwidth]{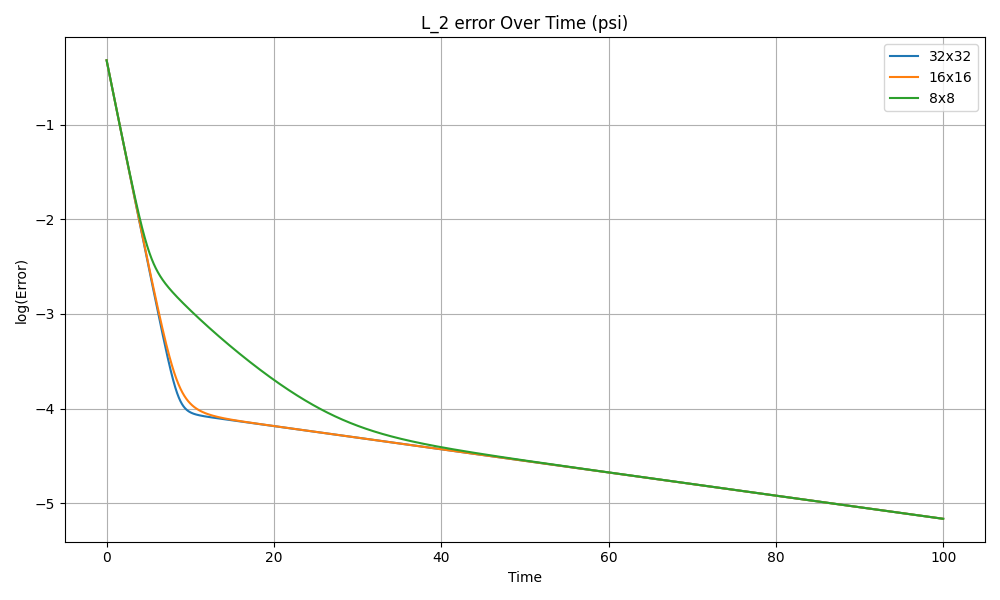}
\caption{Log-scale errors for three interpolation mesh sizes $H$
(labeled $32\times 32$, $16\times 16$, and $8\times 8$). Top left: combined error. Top right: velocity error. 
Bottom row: componentwise errors in $\phi$ and $\vec\psi$.}
\label{fig:H-errors}
\end{figure}

\addtocontents{toc}{\protect\setcounter{tocdepth}{1}}

\section*{Conclusions and Commentary}

In this paper we formulated a continuous data assimilation framework for a two-dimensional NSCH-type phase-field model coupled with a transported auxiliary field. The additional transported field contributes an extra stress term in the momentum balance and provides a natural extension of more classical NSCH dynamics arising in complex-fluid and thrombus-inspired phase-field modeling. Our goal was to develop a coarse-observation CDA formulation for this coupled system, identify its basic structural properties, and study its practical behavior through a fully discrete finite element realization.

On the continuous level, we introduced the nudged NSCH--$\psi$ system and recorded two structural properties relevant to the assimilation dynamics. First, we derived a formal energy law for the reference system, showing that the coupled model retains a dissipative structure compatible with the underlying phase-field formulation. Second, we analyzed the behavior of the phase mean and showed that, although the reference phase mass is conserved, the assimilated phase mean is generally influenced by the observation operator and the nudging term. These identities clarify the roles played by the coupled stresses and by the coarse-observation feedback mechanism.

On the discrete level, we proposed a capped fully discrete finite element splitting scheme for the CDA system. The method uses continuous quadratic elements for the phase, chemical potential, velocity, and auxiliary field, together with continuous linear elements for the pressure. The capped phase is used in the evaluation of the phase-dependent coefficients in order to reflect the numerical implementation and preserve admissible coefficient values. Within this framework, we proved one-step well-posedness of the fully discrete problem and established a stepwise a priori estimate for the capped scheme. Although this estimate does not constitute a closed discrete free-energy law and does not provide a full convergence theory, it gives a basic stability bound for the numerical method employed in the experiments.

The numerical experiments illustrate several features of the proposed CDA approach. In particular, they show recovery from strongly mismatched initial data, sensitivity of the synchronization behavior to the nudging strength and the observation resolution, and robustness under boundary forcing. The coarse-indistinguishability experiment further demonstrates that coarse initial information alone may fail to determine the fine-scale evolution uniquely, while the assimilated trajectory is selected by the supplied time-dependent observations. Taken together, these computations indicate that the proposed nudging strategy is effective in practice for this coupled NSCH-type system.

Several directions remain open. One natural next step is to establish a rigorous well-posedness and synchronization theory for the continuous assimilated dynamics under assumptions compatible with the present model. Another is to strengthen the discrete analysis by developing a closed stability theory or a convergence analysis for the capped fully discrete scheme. It would also be of interest to investigate partial-observation scenarios by nudging only the velocity or only the phase field and to characterize the minimal observation content needed for synchronization in this coupled setting. Finally, extending the analysis to three dimensions, or to additional physical couplings motivated by thrombus biomechanics and complex-fluid structure, remains an important and challenging topic.

\subsection*{Declarations}

\subsection*{Data Availability Statement}

Data sharing is not applicable to this article as no datasets were generated or analysed during the current study.

\subsection*{Funding Statement}

This work was partly supported by the Research Fund of Indiana University. 

\subsection*{Conflict of Interest Disclosure}

All authors certify that they have no affiliations with or involvement in any organization or entity with any competing interests in the subject matter or materials discussed in this manuscript.

\subsection*{Ethics Approval Statement}

Not applicable.

\subsection*{Patient Consent Statement}

Not applicable.

\subsection*{Permission to Reproduce Material from other Sources}

None of the presented material was drawn from other sources.

\subsection*{Clinical Trial Registration}

Not applicable.

\subsection*{Authors' Contribution}

The author contributed to all parts of this manuscript.

\subsection*{Acknowledgements}

T.S. is greatly thankful to Professor Michael Jolly for the insightful discussions and advice.

\bibliographystyle{abbrvnat} %unsrt
\bibliography{references}

\addresseshere

\clearpage

\appendix

\end{document}